\newcommand{\ve}{\varepsilon}
\newtheorem{theorem}{Theorem}[section]
\newtheorem{lemma}[theorem]{Lemma}
\newtheorem{definition}[theorem]{Definition}
\newtheorem{remark}[theorem]{Remark}
\newtheorem{assumption}{Assumption}
\def\XXint#1#2#3{{\setbox0=\hbox{$#1{#2#3}{\int}$ }
\vcenter{\hbox{$#2#3$ }}\kern-.58\wd0}}
\def\T{\mathcal{T}}
\def\e{{\bf e}}
\def\be{{\bf e}}
\definecolor{darkred}{rgb}{0.9,0.1,0.1}
\title{Homogenization of  biomechanical models of plant tissues with randomly distributed cells}
\author{Andrey Piatnitski\footnote{ The Arctic University of Norway, UiT, Campus Narvik, Norway and Institue for
Information Transmission Problems RAS, Moscow, Russia, apiatnitski@gmail.com}   \;  and Mariya Ptashnyk\footnote{ 
Heriot-Watt University, Edinburgh, United Kingdom, m.ptashnyk@hw.ac.uk}
 }
\begin{document}

\maketitle

\begin{abstract}

In this paper homogenization    of a mathematical model for  biomechanics of a plant tissue with randomly distributed cells is considered.  Mechanical properties of a plant tissue are  modelled by   a strongly coupled system of  reaction-diffusion-convection  equations for chemical processes in plant cells and cell walls,   the equations of  poroelasticity for elastic deformations of  plant cell walls and middle lamella,  and the Stokes equations for fluid flow inside the cells.
The nonlinear coupling between the mechanics and chemistry is given by the  dependence of elastic properties of plant tissue  on  densities  of chemical substances    as well as by the dependence  of chemical reactions on mechanical stresses present in a tissue.
 Using  techniques of stochastic homogenization we derive rigorously  macroscopic  model for  plant tissue biomechanics with random distribution of cells.
 Strong stochastic two-scale convergence is shown to  pass to the limit in the non-linear reaction terms.
 Appropriate meaning of the boundary terms is introduced to define the macroscopic equations with flux boundary conditions and transmission conditions  on the microscopic scale.
\end{abstract}
{\bf Key words }  {Stochastic homogenization; stochastic two-scale convergence;
poroelasticity; Stokes system; biomechanics of plant tissues.}

\noindent {\bf AMS subject classification}	35B27, 74Qxx


\section{Introduction}

 Formation of plant tissues and organs is a result of the coordinated expansion of hundreds of thousands of cells, different in size, shape, and composition. Plant organs are composed of several types of tissues, e.g.~epidermis, cortex, endodermis, vascular tissue \cite{Nobel}.
While the turgor pressure,  the main force for cell expansion, acts isotropically,  the anisotropic deformation and growth of plant cells
and tissues rely on the  mechanics of cell walls, surrounding plant cells,  and  the microstructure of  cell walls and tissues.
Plant tissues have complex   hierarchical  microstructures given by the size and arrangement of cells, connected by  cross-linked  pectin network of middle lamella,  on one scale, and by the heterogeneous structure of cell walls on  the other scale \cite{Jensen_2015}. In some  tissues, such as wood or cork,  the geometric arrangement of cells is very regular and can be regarded as periodic \cite{Gibson_1999},  however many plant tissues exhibit  random  variations in their microstructure \cite{Faisal_2012, Mathers_2018, Mebatsion_2006}.  Plant cell walls mainly consist of cellulose  microfibrils, pectin, hemicellulose,  macromolecules, and water.
   The orientation of microfibrils, their length, high tensile strength and interaction with wall matrix macromolecules determine  the wall stiffness. For irreversible deformation, the deposition of new wall materials and the loosening of the cell wall through the breaking of the load-bearing cross-links between microfibrils, pectin and hemicellulose by enzymes activity are required \cite{Schopfer}. 
It is supposed that calcium-pectin cross-linking chemistry  strongly influences elastic properties of plant cell walls  \cite{WHH}.  Pectin is produced in Golgi apparatus  inside the cells and is  deposited to  a cell wall in  a  methyl-esterified form, where it  can be de-methylesterified   by the enzyme pectin methylesterase (PME), which removes  methyl groups by  breaking ester bonds. The de-methyl-esterified pectin  is able to form  calcium-pectin cross-links, and so stiffen the cell wall and reduce its expansion, see e.g.~\cite{WG}, whereas   mechanical stresses can  break calcium-pectin cross-links and hence increase the extensibility  of plant cell walls and middle lamella.

Considering the complex structure of plant tissues and organs, for a   better understand  and  improvement of   plant growth and development, it is important to model and  analyse   how  microscopic structure  and  interactions between chemical processes 
and mechanical properties of individual cell walls and cells   contribute to the properties of the plant tissues and organs \cite{Baskin_2013, Jensen_2015}.
Different approaches were applied to analyse the interplay between micro- and macro-mechanics and transport processes  in plant tissues \cite{Band_2012}. Many results can be found for multiscale modelling and analysis of the periodic microstructure of wood \cite{Davalos_2005, Malek_2017, Qing_2010}.  Multiscale  modelling  and analysis of the impact of the microscopic structure of  plant cell walls, especially  orientation and distribution of   microfibrils, on mechanical properties of  cell walls  were conducted in \cite{Ptashnyk_Seguin}.
A vertex-element model and hybrid vertex-midline  model for plant tissue deformation and growth,  coupled with the cell-scale transport of plant hormone,  were considered in \cite{Fozard, Fozard_2016}. The impact of microfibrils on the mechanical properties of cell walls was accounted for by introducing an anisotropic viscous stress which depends on a pair of microfibril directions. 
A simple constitutive model at the cell scale which characterises cell walls via yield and extensibility parameters together with an appropriate averaging over a cross-section were used to  derive the analogous tissue-level model describing  elongation and bending of a plant root  \cite{Dyson_2014}.
A mesh-free particle method was proposed in \cite{Liedekerke_2010}  to simulate the mechanics of both individual plant cells and cell aggregates in response to external stresses  and     to study how plant tissue mechanics is related to the micromechanics of cells.
The interior of the cell is regarded as liquid phase and simulated  using the smoothed particle hydrodynamics (SPH) method,
where in the domain corresponding to the viscoelastic material of cell walls  the particles are connected by pairwise interactions holding them together.
A multiscale method for the simulation of large viscoelastic deformations of a plant tissue presented in \cite{Ghysels_2010} combines particle method on the microscopic level with standard finite elements methods on the macroscopic scale.
The effect of non-periodic microstructure  on effective (homogenized)  elastic properties of  two-dimensional cellular materials (honeycombs) was studied in \cite{Silva_1995}  by considering non-periodic arrangement of cell walls in random Voronoi honeycombs and applying  finite element analysis. The finite-edge centroidal Voronoi tessellation (FECVT)  was introduced in  \cite{Faisal_2012} to generate a realistic model of a non-periodic tissue microstructure and, combined with finite elements analysis, was used to determine the effective elastic properties of plant tissues, especially plant petioles and stems \cite{Faisal_2013}. 
Smoothed particles hydrodynamics (SPH) framework was used in \cite{Mimault_2019} to model plant tissue growth. The framework identifies the SPH particle with individual cells in a tissue, but the tissue growth is performed at the macroscopic level using SPH approximations and plant tissue is represented as an anisotropic poro-elastic material. 
A coarse-grained multiscale numerical model is proposed in \cite{Wijerathne_2019} to predict  macroscale  deformations of food-plant tissues (e.g.~apple tissues) during drying.

In \cite{Andrey_Mariya} we derived and analysed a mathematical model for plant tissue biomechanics, which describes  the interactions between  calcium-pectin dynamics and  deformations  of a plant tissue.  The microscopic model, at  the  length scale of plant cells, comprises     a strongly coupled system of the Stokes equations modelling water flow inside  plant cells,  the equations of poro-elasticity defining elastic deformations of   plant cell walls  and middle lamella, and reaction-diffusion-convection equations describing the dynamics of the  methyl-esterfied pectin, de-methyl-esterfied pectin, calcium ions, and calcium-pectin cross-links. The interplay between the mechanics and the chemistry   comes in by assuming that the elastic  properties of  cell walls and middle lamella depend on the density of the calcium-pectin cross-links and the  stress within  cell walls and middle lamella   can break the cross-links.   Assuming periodic distribution of cells in a plant tissue in \cite{Andrey_Mariya} we derived rigorously macroscopic  model   for plant tissue biomechanics.  The two-way coupling between chemical processes and mechanics is the main novelty of the model,  which also induces some non-standard elements in the analysis of the model and in the rigorous  derivation of macroscopic equations.   In this paper we generalise the results obtain in \cite{Andrey_Mariya}  by  considering  random distribution of cells in  a plant tissue,   observed experimentally in many plant tissues and organs \cite{Faisal_2012,  Mebatsion_2006}.  The derivation  of macroscopic equations  from a continuum  description of the microscopic  processes on the cell  level using stochastic homogenization techniques results into a continuum macroscopic two-scale model   containing the information on the microscopic interactions.  Our microscopic model incorporates microscopic properties of plant cell walls, essential for plant tissue mechanics.  The macroscopic model takes into account the microscopic structure of a plant tissue via effective (macroscopic) elasticity and permeability tensors 
and includes   the interplay between the fluid in cell inside and poroelastic nature  of cell walls and middle lamella.   The effect of the microstructure and heterogeneity of the processes is also reflected in the equations for calcium-pectin chemistry  via effective (macroscopic) diffusion coefficients, reaction terms and advective velocity. In the relation to particle and vertex-elements methods, continuum modelling approach proposed here  may be beneficial when consider large size plant tissues and organs. 

  To analyse macroscopic   mechanical properties  of  plant  tissues	with a random distribution of cells,     we derive rigorously a  macroscopic model for plant biomechanics  using  techniques of stochastic homogenization.
The stochastic two-scale convergence \cite{Zhikov_Piatnitski_2006} is applied to obtain the macroscopic equations.    The main mathematical difficulties in the derivation of the macroscopic equations  arise from the strong coupling between the equations of poro-elasticity    and the system of reaction-diffusion-convection equations, as well as    due to transmission conditions between the free fluid and poro-elastic material. The strong stochastic two-scale convergence for the displacement gradient and flow velocity  is proven to pass to the  limit in the nonlinear reactions terms.
Extension arguments and formulations of surface integrals as volume integrals are used to pass to the stochastic two-scale limit in the equations with non-homogeneous Neumann boundary conditions and transmission conditions. To pass to the limit in the flux boundary conditions defined on the surfaces of the microstructure, Palm measure and the proven here  trace inequality for $H^1$-function in the  probability space, see Lemma~\ref{converg_bound_Gamma}, are used.

 Some of the first  results on the stochastic homogenization of linear second-order elliptic equations  were obtained in  \cite{Kozlov1980, Papanicolaou1979, ZKON1979}. The homogenization of quasi-linear elliptic  and parabolic equations with stochastic coefficients  and convex integral operators was considered  in \cite{Bensoussan1988, Castell2001,DalMaso1986_1, DalMaso1986_2}.  Subadditive ergodic theory and the method of  viscosity solutions were applied   to homogenize  Hamilton-Jacobi,  viscous Hamilton-Jacobi equations, and fully nonlinear  elliptic and parabolic equations  in stationary ergodic media \cite{Armstrong2012, Caffarelli2005, Kosygina2006, Souganidis2005, Souganidis2010} (see also references therein).
 The stochastic two-scale convergence introduced in \cite{Zhikov_Piatnitski_2006} has been  extended to Riemannian manifolds and has been applied to analyze heat transfer through composite and polycrystalline  materials with nonlinear conductivities \cite{Heida_2012, Heida_2011}.
  The  two-scale convergence in the mean \cite{BMW1994}  has been applied to derive macroscopic equations for  single- and two-phase fluid flows in randomly fissured media  \cite{BMP2003, Wright2001}.

The poro-elastic equations, modelling   interactions   between  fluid flow  and elastic deformations of  a porous medium, has been first  obtained  by Biot  using  a phenomenological approach \cite{Biot_1972, Biot_1962, Biot_1941} and subsequently  derived by applying formal asymptotic expansion \cite{Auriault, Burridge, Levy, SP} or  the two-scale convergence method   \cite{Clopeau, GM, JMN,  Meirmanov_2008, Meirmanov_2007, Nguetseng_1990}.
Along many results for poroelastic equations, only few studies  of  interactions between a free fluid and a deformable porous medium can be found. 
 In \cite{Showalter_2005} nonlinear semigroup method was used for mathematical analysis of a system of poroelastic equations coupled with the Stokes equations for free fluid flow.  A rigorous derivation of  interface conditions between a poroelastic medium  and an elastic body  was considered in \cite{MW}.  Numerical methods  for  coupled system of poroelastic   and Navier-Stokes equations were studied in~\cite{Badia, Bukac}.

 One of the approaches commonly used in numerical homogenization to approximate the effective coefficients
 of a microscopic problem describing some processes in a random medium is the so-called periodization \cite{BP2004}. The key idea 
 of this method is to choose a large enough sample of the random medium,  to extend it periodically, and to take the 
 effective coefficients of the obtained periodic problem as an approximation of the effective coefficients of the original
random problem.    Recent years an essential progress was achieved in this approach, see  the work \cite{Gloria_2015}, 
and references therein. Justification of this method for the model studied in the present paper
is an interesting problem.  Mixed multiscale finite element  method  \cite{Aarnes_2008} or   stochastic variational multiscale  method  \cite{Asokan_2006}     can also be used for numerical simulation of multiscale stochastic problems. 

The paper is organised as follows. In Section~\ref{model} we  formulate the microscopic model for plant tissue biomechanics. The main results of the paper are summarised in Section~\ref{main_results}.
The a priori estimates and convergence results are given in Sections~\ref{apriori_1} and \ref{convergence}.
In Section~\ref{macro_elasticity} we derive macroscopic equations for  the coupled poro-elastic  and Stokes problem.
The strong stochastic two-scale convergence for displacement  gradient and flow velocity is proven in Section~\ref{strong_convergence}.
  The macroscopic equations for the system of reaction-diffusion-convection equations are derived  in Section~\ref{macro_diffusion}.

\section{Microscopic model}\label{model}

We consider a probability space $(\Omega, \mathcal F, \mathcal P)$ with probability measure $\mathcal P$.
We define  a $3$-dimensional dynamical system $\T_x:\Omega \to \Omega$, i.e.\  a family $\{\T_x\, : \, x\in\mathbb{R}^3\}$ of invertible maps, such that for each $x\in \mathbb R^3$,  $\T_x$ is  measurable and satisfy the following conditions:
\begin{enumerate}
\item[(i)] $\T_0$  is the identity map on  $ \Omega$, and for all  $ x_1, \, x_2 \in \mathbb R^3$    the semigroup property holds:
$$ \T_{x_1+x_2}= \T_{x_1}\T_{x_2}. $$
\item[(ii)]  $\mathcal P$ is an invariant measure for $\T_x$, i.e.\ for each $x\in \mathbb R^3$ and $F\in \mathcal F$ we have that $$\mathcal P(\T^{-1}_x F) = \mathcal P(F).$$
\item[(iii)] For each $F\in \mathcal F$,  the set $\{ (x,\omega)\in \mathbb R^3\times \Omega : \T_x \omega \in F\} $ is a $\mathcal{L}\times \mathcal{F}$-measurable subset of $\mathbb R^3 \times \Omega$, where $\mathcal{L}$ denotes the Lebesgue $\sigma$-algebra on $\mathbb R^3$.
\end{enumerate}

 We consider a fixed measurable set   $\Omega_f$ such that  $\mathcal P(\Omega_f)>0$ and $\mathcal P(\Omega\setminus \Omega_f) >0$ and  denote $\Omega_e = \Omega \setminus \Omega_f$.  We also consider $\Omega_\Gamma \subset \Omega$,  with $\mathcal P(\Omega_\Gamma)>0$ and $\mathcal P(\Omega_\Gamma\cap \Omega_j)>0$,  for $j=e,f$.

 For $\mathcal P$-a.a.\ $\omega\in \Omega$ we define the following  random subdomains  in $\mathbb R^3$
 $$
 G_j(\omega) = \{ x\in \mathbb R^3: \; \T_x  \omega \in \Omega_j\}, \; \; \; \text{ for } j=e,f,\;
 \; \quad G_\Gamma(\omega) = \{ x\in \mathbb R^3: \; \T_x  \omega \in \Omega_\Gamma\},
 $$
 and surfaces
  $$ \Gamma(\omega) = \partial G_f(\omega), \;\qquad \widetilde \Gamma(\omega) = \Gamma(\omega)  \cap G_\Gamma(\omega).
  $$
 We shall consider the following  assumptions on $G_f(\omega)$,  $\Gamma(\omega)$, and  $\widetilde\Gamma(\omega)$:
 \begin{assumption}\label{assum1}
\quad  \phantom{22}
 \begin{itemize}
 \item[1.]  $G_f(\omega)$ consists of countable number of disjoined Lipschitz domains for  $\mathcal P$-a.a.\  $\omega \in \Omega$ with  a uniform    Lipschitz constant.
\item[2.] The distance between two connected components of $G_f(\omega)$ is uniformly bounded from above and below.
\item[3.]  The diameter of  connected components of  $G_f(\omega)$ is bounded from below and above by some positive constants.
\item[4.] The surface  $\widetilde \Gamma(\omega) \subset \Gamma(\omega)$  is  open on $\Gamma(\omega)$ and  Lipschitz continuous.
\end{itemize}
\end{assumption}

Consider a bounded $C^{1, \alpha}$-domain $G\subset \mathbb R^3$, with $\alpha >0$,  representing a part of a plant tissue.  In a plant tissue individual cells,  consisting  of cell inside and cell walls, are connected by the pectin network of middle lamella.  Then the microscopic structure of a plant tissue with a random distribution of  cells is  defined
as  $$
 G_f^\ve = \{ x\in \mathbb R^3: \; \T_{x/\ve} \omega \in \Omega_f \} \cap G, \quad  G_\Gamma^\ve = \{ x\in \mathbb R^3: \; \T_{x/\ve} \omega \in \Omega_\Gamma \} \cap G,  \quad  G_e^\ve = G \setminus G_f^\ve,
 $$
 $$ \Gamma^\ve = \partial G_f^\ve,  \qquad \widetilde \Gamma^\ve=  \Gamma^\ve \cap G^\ve_\Gamma,
$$
$\mathcal P$-a.s., where $G_e^\ve$ represent the subdomains occupied by cell walls and middle lamella, $G_f^\ve$ denotes the  cell inside, and $\widetilde \Gamma^\ve$ defines a part of cell membrane which is impermeable to calcium ions.

Assumption~\ref{assum1}$.2$  states that  the thickness of cell walls and middle lamella is uniformly bounded from above and below and
Assumption~\ref{assum1}$.3$ postulates that the diameter of cells is bounded from above and below.

Due to  assumed random distribution of cells in a plant tissue,   the permeability  and  elastic properties of plant cell walls and middle lamella  are characterised by the corresponding random variables. 
For this  we define statistically homogeneous random fields
${\bf E}_1(x,\omega, \xi) = {\bf \widetilde E}_1(\T_x\omega, \xi) $ and $K_p(x,\omega) = \widetilde K_p(\T_x\omega)$, where
${\bf \widetilde E}_1(\cdot, \xi)$ and $\widetilde K_p(\cdot)$ are given measurable functions from $\Omega$ to  $\mathbb R^{3^4}$ and $\mathbb R^{3\times 3}$, respectively, for $\xi \in \mathbb R$   representing the dependence of the elastic properties on the calcium-pectin cross-links density.  It is observed experimentally that the load bearing calcium-pectin cross-links reduce cell wall expansion, see e.g.~\cite{WHH}, and hence we assume that elastic properties of cell walls and middle lamella depend on the  density of calcium-pectin cross-links.

Then for each $\omega \in \Omega$ and $\xi \in \mathbb R$ the microscopic elasticity tensor  ${\bf E}_1^\ve$ and permeability tensor $K_p^\ve$ are defined as
\begin{equation}\label{EK_def}
{\bf E}_1^\ve(x, \xi) = {\bf E}_1(x/\ve, \omega, \xi), \quad K_p^\ve(x)= K_p(x/\ve, \omega).
\end{equation}

In the mathematical model for biomechanics of a plant tissue we consider concentration of  calcium  $c_e^\ve$ and $c_f^\ve$  in   cell walls  and middle lamella  $G_e^\ve$ and in cell cytoplasm $G_f^\ve$ (cell inside), respectively.  In addition,  in  the domain of  cell walls  and middle lamella  $G_e^\ve$  densities of  methylesterified and de-methylesterified   pectins $n^\ve_e$ and  $n^\ve_d$ and  of calcium-pectin cross-links  $n^\ve_b$   are considered.     We shall use the notation $b^\ve_e= (n_e^\ve, n_d^\ve, n_b^\ve) $
and $D_b(b_{e,3}^\ve)= \text{diag} (D_{n_e}(n_b^\ve), D_{n_d}(n_b^\ve), D_{n_b}(n_b^\ve))$ denotes  the diagonal matrix of diffusion coefficients for $n_e^\ve$,  $n^\ve_d$, and   $n^\ve_b$ respectively.
We assume that the inflow of new calcium is facilitated only on parts of the cell membrane $\Gamma^\ve \setminus \widetilde \Gamma^\ve$.   Here we consider   a passive flow of calcium between cell wall and cell inside.  The regulatory mechanism for calcium inflow by mechanical properties of cell walls will be considered in further studies.
For elastic deformations of plant cell walls and middle lamella  we consider homogenized equations of poro-elasticity reflecting the microscopic structure of cell walls composed of elastic cellulose microfibrils and cell wall matrix permeable for the fluid flow.
The differences in the elastic properties of cell walls and middle lamella are reflected in the elasticity tensor ${\bf E}_1^\ve$, which depends on the microscopic variable $x/\ve$.
Here we consider  diffusion coefficients depending on calcium-pectin cross-links density.  The analysis in the case of diffusion coefficients depending additionally on microscopic and macroscopic variables will follow the same steps.

We shall use the notations $G_T = (0,T)\times G$, $(\partial G)_T = (0,T)\times \partial G$,  $G_{j,T}^\ve = (0,T)\times G_j^\ve$ for $j=e,f$,   $\Gamma_T^\ve = (0, T) \times \Gamma^\ve$, and $\widetilde \Gamma_T^\ve = (0, T) \times \widetilde \Gamma^\ve$.  By $\Pi_\tau w$ we define the tangential projection of a vector $w$, i.e.\ $\Pi_\tau w= w - (w\cdot n) n$, where $n$ is a normal vector and $\tau$ indicates the tangential subspace to the boundary.

For $\mathcal P$-a.a.\ realisations $\omega \in \Omega$ the microscopic model for the concentration of  calcium   and densities of  pectins and calcium-pectin cross-links reads
\begin{equation}\label{eq_codif}
\begin{aligned}
&\partial_t b_e^\ve = {\rm div}(D_b(b^\ve_{e,3})\nabla b^\ve_e) + g_b(c_e^\ve, b_e^\ve, \e(u_e^\ve) ) , \qquad &&  \text{ in } G_{e, T}^\ve,\\
&\partial_t c_e^\ve = {\rm div}(D_e(b^\ve_{e,3})\nabla c^\ve_e) + g_e ( c^\ve_e, b_e^\ve, \e(u_e^\ve))  \qquad &&  \text{ in }  G_{e, T}^\ve ,\\
&\partial_t c^\ve_f = {\rm div}(D_f\nabla c_f^\ve - \mathcal{G}(\partial_t u_f^\ve) c_f^\ve)  + g_f(c_f^\ve)\qquad &&  \text{ in } G_{f, T}^\ve,\\
&D_b(b^\ve_{e,3})\nabla b_e^\ve \cdot n =\ve \, R(b_e^\ve) && \text{ on }   \Gamma^\ve_T,  \\
&c_f^\ve = c_e^\ve  \qquad  && \text{ on }\Gamma^\ve_T\setminus\widetilde \Gamma^\ve_T, \\
&D _e(b^\ve_{e,3})\nabla c_e^\ve \cdot n = (D_f\nabla c_f^\ve -   \mathcal G(\partial_t u_f^\ve) c_f^\ve)\cdot n  \qquad  && \text{ on } \Gamma^\ve_T\setminus\widetilde \Gamma^\ve_T, \\
&D_e(b^\ve_{e,3})\nabla c_e^\ve \cdot n =0, \qquad (D_f \nabla c_f^\ve- \mathcal G(\partial_t u_f^\ve) c_f^\ve) \cdot n = 0 \quad && \text{ on } \widetilde \Gamma^\ve_T.
\end{aligned}
\end{equation}
Here $u^\ve_e$ stands for the displacement from the equilibrium position in poroelastic material of cell wall and
middle lamella, $\mathbf{e}(u^\ve_e) = (\mathbf{e}(u^\ve_e)_{ij})_{i,j=1,2,3}$ for  its symmetrized gradient, with $\mathbf{e}(u^\ve_e)_{ij} = (\partial_{x_i} u^\ve_{e j} + \partial_{x_j} u^\ve_{e i})/2$, and   $\partial_t u_f^\ve$ denotes
the fluid velocity in the cell inside.  The pressures in the poroelastic and fluid domains are denoted by $p_e^\ve$
and $p_f^\ve$, respectively.   The function $\mathcal{G}$ defines the velocity field in the convection term  in cell inside and  is a Lipschitz continuous bounded
function of the intracellular flow velocity $\partial_t u_f^\ve$. The condition that $\mathcal{G}$ is bounded is natural from the biological and physical point of view, because the flow velocity in plant tissues is bounded. This condition is also essential for the derivation of \textit{a priori} estimates.

The  water   flow inside the cells and elastic deformations of plant cell walls  and middle lamella   are modelled by  a coupled system of  poro-elastic  and Stokes equations
\begin{equation}\label{equa_cla}
\begin{aligned}
&\rho_e \partial^2_t u^\ve_e- {\rm div} ( {\bf E}^\ve(b^\ve_{e,3}) \e( u^\ve_e)) + \nabla p_e^\ve  = 0  \qquad &&  \text{ in } G_{e,T}^\ve, \\
&\rho_p\partial_t p^\ve_e- {\rm div} ( K^\ve_p \nabla p^\ve_e -  \partial_t u_e^\ve)  = 0  \qquad &&  \text{ in }  G_{e,T}^\ve, \\
&\rho_f \partial^2_t u_f^\ve -\ve^2 \mu \, \text{div} ( \e(\partial_t u^\ve_f)) + \nabla p_f^\ve = 0&&  \text{ in } G_{f,T}^\ve, \\
&\text{div } \partial_t u_f^\ve = 0 &&  \text{ in } G_{f,T}^\ve , \\
& ({\bf E}^\ve(b^\ve_{e,3}) \, \e( u^\ve_e) - p_e^\ve I) n = ( \ve^2 \mu \, \e(\partial_t u^\ve_f)  - p_f^\ve I) n &&  \text{ on } \Gamma^\ve_T, \\
&\Pi_\tau \partial_t u_e^\ve =\Pi_\tau \partial_t u_f^\ve, \qquad  n \cdot (\ve^2 \mu \,  \e(\partial_t u^\ve_f) - p_f^\ve I) n  = - p_e^\ve \qquad
  &&  \text{ on } \Gamma^\ve_T, \\
&(-K^\ve_p \nabla p^\ve_e +  \partial_t u_e^\ve)\cdot n = \partial_t u_f^\ve\cdot n  &&  \text{ on } \Gamma^\ve_T, \\
& u_e^\ve(0,x) = u^\ve_{e0}(x), \quad  \partial_t u_e^\ve (0,x) = u_{e0}^1(x), \quad   p_e^\ve (0,x) = p^\ve_{e0}(x) && \text{ in } G_e^\ve, \\
 & \partial_t u_f^\ve (0,x) =u^1_{f0}(x) \quad   &&  \text{ in } G_f^\ve,
\end{aligned}
\end{equation}
 where $\rho_e$ denotes the poroelastic wall density,  $\rho_p$ is the mass storativity coefficient, and $\rho_f$ denotes the fluid density.  We assume that $\rho_e$, $\rho_p$, and $\rho_f$ are positive and constant. 
The dependence of the elastic properties of the cell wall matrix and middle lamella on calcium-pectin cross-links is reflected in the dependence of the elasticity tensor ${\bf E}^\ve$ on $b^\ve_{e,3}(\cdot)$.
In what follows we assume that this dependence is non-local in temporal variable which reflects the time of reaction, i.e.\ the stretched cross-links have different impact (stress drive hardening) on the elastic properties of the cell wall matrix than newly-created cross-links, see e.g.~\cite{Broedersz, Proseus_2006, Schuster_2012}.
More precisely, we assume  in \eqref{EK_def} that $\xi=\mathcal{K}\big(b^\ve_{e,3}(\cdot)\big)(t ,x) =\int_0^t \kappa (t-\tau) b^\ve_{e,3}(\tau, x) d \tau$, where  $\kappa (\cdot)$ is a smooth
non-negative kernel, and  define
$$
\widetilde{\bf E}\big(\omega, b^\ve_{e,3}(\cdot)\big)=\widetilde{\bf E}_1\Big(\omega,
\int_0^t \kappa (t-\tau) b^\ve_{e,3}(\tau, x) d \tau\Big),\qquad
{\bf E}^\ve\big(x, b^\ve_{e,3}(\cdot)\big) = \widetilde{\bf E}\big(\mathcal T_{x/\ve} \omega, b^\ve_{e,3}(\cdot)\big).
$$
Together with the profile of function ${\bf E}_1^\ve$ this kernel specifies how the elastic properties of  cell walls and middle lamella  depend on calcium-pectin cross-links, see Assumption {\bf A2} for further conditions on $\kappa$.

On the external boundaries we consider some given forces applied to plant tissues and flux conditions for pectins and calcium:
\begin{equation}\label{exbou_co}
\begin{aligned}
&D_b\nabla b_e^\ve \cdot n = F_b(b_e^\ve), \quad  D_e\nabla c_e^\ve \cdot n = F_c(c_e^\ve) && \text{ on } (\partial G)_T, \\
& {\bf E}^\ve(b^\ve_e) \e( u^\ve_e) \, n = F_u  \quad && \text{ on } (\partial G)_T, \\
& (K^\ve_p \nabla p^\ve_e -  \partial_t u_e^\ve) \cdot n = F_p  &&  \text{ on }  (\partial G)_T.
\end{aligned}
\end{equation}
A detailed derivation of the model equations  \eqref{eq_codif} and \eqref{equa_cla} can be found in \cite{Andrey_Mariya}.

System \eqref{eq_codif}--\eqref{exbou_co} is studied under the  following assumption on the  coefficients and nonlinear functions:

\bigskip
\noindent
\begin{assumption}\label{assumptions1}
${ }$
\begin{itemize}
\item [\bf A1.]  $D_b^{jj}, D_e \in C(\mathbb R)$  such that
$d_j\leq D_b^{jj}(\xi) \leq \tilde d_j$ and $d_e\leq D_e(\xi) \leq \tilde d_e$ for all $\xi \in \mathbb R$, with some $d_j, d_e, \tilde d_j, \tilde d_e>0$ and $j=1,2,3$.
\item [\bf A2.]  Elasticity tensor ${\bf \widetilde E}(\omega,\xi) = ( \widetilde E_{ijkl}(\omega,\xi))_{1\leq i,j,k,l\leq 3}$ satisfies  $\widetilde E_{ijkl} = \widetilde E_{klij} =\widetilde  E_{jikl} =\widetilde E_{ijlk}$   and
$\alpha_1 |A|^2  \leq  \widetilde{\bf E}(\omega, \xi) A \cdot A   \leq \alpha_2 |A|^2 $ for  all symmetric  $A \in \mathbb R^{3 \times 3}$,   $\xi \in \mathbb R_+$, $\mathcal P$-a.a.\ $\omega \in \Omega$,  and  $0<\alpha_1 \leq \alpha_2 < \infty$,\\
 ${\widetilde {\bf E}}(\omega, \varsigma(\cdot)) =\widetilde{\bf E}_1(\omega, \mathcal K(\varsigma(\cdot)))$,  where
 $\widetilde{\bf E}_1\in C(\Omega; C^2_b( \mathbb R))$ and  $\mathcal K(\varsigma(\cdot)) = \int_0^t \kappa (t-\tau)
 \varsigma (\tau, x) d \tau$,
with a smooth function $\kappa: \mathbb R_+ \to \mathbb R_+$  such that $\kappa(0)=0$.
\item [\bf A3.]  $\widetilde K_p \in L^\infty(\Omega)$ and $\widetilde K_p(\omega)\eta \cdot \eta \geq k_1 |\eta|^2$ for $\eta \in \mathbb R^3$,  $\mathcal P$-a.a.\  $\omega \in \Omega$,  and $k_1 >0$.
\item [\bf A4.]  The convection function $\mathcal{G}$ is a Lipschitz continuous function on $\mathbb R^3$ such that $|\mathcal{G}(r)|\leq \rho$, for some $\rho>0$.
\item [\bf A5.] For functions $g_b$, $g_e$, $g_f$, $R$, $F_b$, and $F_c$ we assume
$$
g_b\in C(\mathbb R\times\mathbb R^3\times \mathbb R^6;\mathbb R^3), \quad
g_e\in C(\mathbb R\times\mathbb R^3\times \mathbb R^6), \quad
F_b,\,R\in C(\mathbb R^3;\mathbb R^3),
$$
and  $F_c$  and $g_f$  are Lipschitz continuous.   Moreover, the following estimates hold:
$$
\begin{aligned}
&|g_b(s,r, A )|\leq C_1(1+|s|+|r|) + C_2|r||A|,   \quad  && |F_b(r)| + |R(r)| \leq C(1+ |r|),  \\
 &|g_e(s,r, A)|\leq C_1(1+|s|+ |r|)+ C_2 (|s|+|r|)|A|,\quad  && |F_c(s)| + |g_f(s)|\leq C(1+|s|),
\end{aligned}
$$
where $s\in\mathbb R_+$, $r \in \mathbb R^3_+$,  and $A$ is a symmetric $3\times 3$ matrix. Here and in what follows we identify the space of symmetric $3\times3$ matrices with $\mathbb R^6$.

  It is also assumed that for any symmetric $3\times 3$ matrix $A$  we have  that  $g_{b,j}(s,r,A)$, $F_{b,j}(r)$, $R_j(r)$  are non-negative for  $r_j=0$,  $s\geq 0$, and  $r_i\geq 0$, with $i=1,2,3$ and $j\neq i$,   and $g_e(s,r,A)$, $g_f(s)$, and $F_c(s)$  are non-negative for $s=0$ and $r_j\geq 0$,  with $j=1,2,3$.

We assume also  that   $F_b$ and $R$ are locally Lipschitz continuous,   and
$$
\begin{aligned}
&  |g_b(s_1,r_1, A_1 ) - g_b(s_2,r_2, A_2 )| \leq C_1(|r_1|+|r_2| )|s_1-s_2|  + C_2 (|s_1|+ |s_2|+|A_1|+|A_2|)|r_1-r_2| \\ &
 \hspace{ 5  cm }  +  C_3(|r_1|+|r_2| ) |A_1-A_2|, \\
  &  |g_e(s_1,r_1, A_1 ) - g_e(s_2,r_2, A _2)|\leq C_1(|r_1|+|r_2| + |A_1|+ |A_2|)|s_1-s_2|  \\
  & \hspace{ 2.2  cm }  +C_2(|s_1|+ |s_2|+|A_1| + |A_2|)|r_1-r_2| + C_3 (|r_1|+|r_2| + |s_1|+ |s_2|) |A_1-A_2|,
\end{aligned}
$$
for $s_1, s_2\in ( -\mu, + \infty)$, $r_1, r_2 \in  ( - \mu, + \infty)^3$, for some $\mu>0$,   and $A_1, A_2$ are symmetric $3\times 3$ matrices.
\item [\bf A6.]
$b_{e0}\in   L^\infty(G)^3$,\,  $c_{0} \in  L^\infty(G)$, \  and $b_{e0,j}\geq 0$, $c_{0}\geq 0$ a.e.\ in $G$,  where  $j=1,2,3$,
\\
$ u_{e0}^1 \in H^1(G)^3$,    $u_{f0}^1\in H^2(G)^3$  and
   ${\rm div }\,  u^{1}_{f0} = 0$  in $G_f^\ve$ for $\mathcal P$-a.a. realisation $\omega \in \Omega$, \\
$u_{e0}^{\ve} \in H^1(G_e^\ve)^3$,  \,   $p^{\ve}_{e0} \in H^1(G)$,   are defined as solutions of
$$
\begin{aligned}
& {\rm div}({\bf E}^{\ve}(b_{e0,3}) \be( u^{\ve}_{e0})) = f_u  \quad  &&  \text{in } G_e^\ve, &&\\
&\Pi_\tau({\bf E}^{\ve}(b_{e0,3}) \be( u^{\ve}_{e0}) \, n ) = \ve^2 \mu \,  \Pi_\tau ( \be(u_{f0}^1) n) &&  \text{on } \Gamma^\ve,\\
& n \cdot {\bf E}^{\ve}(b_{e0,3}) \be( u^{\ve}_{e0}) \, n  =0&& \text{on } \Gamma^\ve, &&  u^\ve_{e0} = 0 \quad  \text{on } \partial G,
\\
& {\rm div} (K^{\ve}_p \nabla p^{\ve}_{e0}) = f_p \quad && \text{in } G,  &&   p^\ve_{e0} =0 \quad \text{on } \partial G,
 \end{aligned}
$$
 $\mathcal P$-a.s.,  for given  $f_u  \in L^2(G)^3$ and $f_p \in L^2(G)$,  \\
$F_p \in H^1(0,T; L^2(\partial G))$, $F_u \in H^2(0,T; L^2(\partial G))^3$.
\end{itemize}
\end{assumption}
\begin{remark} 
Under  our assumptions on $u^\ve_{e0}$ and $p^\ve_{e0}$  by the standard stochastic  homogenization arguments   we obtain
$$
\begin{aligned}
&\tilde  u^\ve_{e0} \to u_{e0}, \qquad     p^\ve_{e0} \to  p_{e0} && \text{strongly  in } L^2(G),  \\
& \be(u^\ve_{e0}) \to \be(u_{e0}) + U^0_{e,\rm{sym}} && \text{strongly  stochastically two-scale}, \; U^0_{e,\rm{sym}} \in  L^2(G; L^2_{\rm pot}(\Omega))^3,
\end{aligned}
$$
for $\mathcal P$-a.a.\ $\omega \in \Omega$, where $\tilde u_{e0}^\ve$  is an extension of  $u_{e0}^\ve$,  and  $u_{e0}\in H^1(G)^3$ and $p_{e0} \in H^1(G)$ are solutions of the corresponding macroscopic (homogenized) equations.
\\
Here the subscript {\small\rm sym} is used to emphasize that the corresponding matrix is symmetric.
\end{remark}

 Notice that in the equation for calcium  $c_f^\ve$ inside plant cells  we consider a bounded function of the  water  velocity $u_f^\ve$. This technical assumption is biologically justified, since only bounded velocities are possible inside plant cells. \\

By $\langle \cdot, \cdot \rangle_{H^1(G)^\prime, H^1}$ we shall denote the duality product between $L^2(0,T; (H^1(G))^\prime)$ and $L^2(0,T; H^1(G))$, and
$$
\langle \phi, \psi \rangle_{G_T} =  \int_0^T \int_G  \phi \, \psi \, dx dt\quad \text{ for } \quad  \phi \in L^q(0,T; L^p(G)) \text{ and } \psi \in L^{q^\prime}(0,T; L^{p^\prime} (G)),
$$
and
$$
\langle \phi, \psi \rangle_{G_T, \Omega} =  \int_0^T \int_G  \int_\Omega \phi \, \psi \, d\mathcal P(\omega) dx dt \quad \text{ for } \quad  \phi \in L^q(0,T; L^p(G\times \Omega)) \text{ and } \psi \in L^{q^\prime}(0,T; L^{p^\prime} (G\times \Omega)),
$$
where $1<p,q< + \infty$,  $1/q+ 1/q^\prime=1$ and   $1/p + 1/p^\prime =1$.

\begin{definition}
Weak solution of \eqref{eq_codif}--\eqref{exbou_co} are functions
$$
\begin{array}{l}
u_e^\ve \in [ L^2(0,T; H^1(G_e^\ve))\cap H^2(0,T; L^2(G_e^\ve))]^3,\\[1.7mm]
p_e^\ve \in L^2(0,T; H^1(G_e^\ve))\cap H^1(0,T; L^2(G_e^\ve)),\\[1.7mm]
 u_f^\ve \in [L^2(0,T; H^1(G_f^\ve))\cap H^1(0,T; L^2(G_f^\ve))]^3,\\[1.7mm]
 \hbox{\rm div}\, u_f^\ve =0 \  \hbox{\rm in }G_{f,T}^\ve, \quad \Pi_\tau u_e^\ve =\Pi_\tau u_f^\ve \;  \hbox{\rm on } \Gamma_{T}^\ve, \\[1.7mm]
 p_f^\ve \in L^2((0,T)\times G_f^\ve)
 \end{array}
 $$
for $\mathcal P$-a.a.  $\omega \in \Omega$, that satisfy the integral relation
\begin{equation}\label{weak_u_ef}
\begin{aligned}
&\langle  \rho_e \partial^2_t u^\ve_e, \phi \rangle_{G_{e,T}^\ve} + \langle{\bf E}^\ve(b_e^\ve) \e( u^\ve_e), \e(\phi) \rangle_{G_{e,T}^\ve}  +
\langle \nabla p_e^\ve, \phi \rangle_{G_{e,T}^\ve}
\\
+ &
\langle \rho_p \partial_t p^\ve_e, \psi \rangle_{G_{e,T}^\ve} + \langle K_p^\ve \nabla p^\ve_e -  \partial_t u_e^\ve, \nabla \psi \rangle_{G_{e,T}^\ve}  +\langle \partial_t u_f^\ve\cdot n , \psi \rangle_{\Gamma^\ve_T}  - \langle p_e^\ve, \eta\cdot n \rangle_{\Gamma^\ve_T}  \\
+ &  \langle  \rho_f \partial^2_t u^\ve_f, \eta \rangle_{G_{f,T}^\ve} + \mu \,  \ve^2  \langle \e(\partial_t u^\ve_f), \e(\eta) \rangle_{G_{f,T}^\ve}
 = \langle F_u, \phi \rangle_{(\partial G)_T} +  \langle F_p, \psi \rangle_{(\partial G)_T}
\end{aligned}
\end{equation}
for all $\phi \in L^2(0,T; H^1(G_e^\ve))^3$, $\psi  \in L^2(0,T; H^1(G_e^\ve))$, $\eta \in   L^2(0,T; H^1(G_f^\ve))^3$
such that $\Pi_\tau \phi =\Pi_\tau \eta $ on $\Gamma^\ve$ and ${\rm div} \, \eta =0$ in $G_{f,T}^\ve$, and functions
$$
b_e^\ve\in [L^2 (0,T; H^1(G_e^\ve))\cap L^\infty (0,T; L^2(G_e^\ve))]^3,\;
c^\ve=c_e^\ve\,  \chi\big._{G_e^\ve} + c_f^\ve\,  \chi\big._{G_f^\ve} \in L^2 (0,T; H^1(G\setminus \widetilde\Gamma^\ve))\cap L^\infty (0,T; L^2(G))
$$
that satisfy the integral relations
\begin{equation}\label{cd_one}
\begin{aligned}
\langle \partial_t b^\ve_e, \varphi_1 \rangle_{H^1(G_{e}^\ve)^\prime, H^1}+\langle D_b(b^\ve_{e,3}) \nabla b^\ve_e, \nabla\varphi_1 \rangle_{G_{e,T}^\ve}
-\langle g_b(c^\ve_e,b_e^\ve, \e(u_e^\ve)), \varphi_1 \rangle_{G_{e,T}^\ve} = \ve \langle  R(b_e^\ve), \varphi_1 \rangle_{\Gamma^\ve_T}\\ + \langle F_b(b_e^\ve), \varphi_1 \rangle_{(\partial G)_T}
\end{aligned}
\end{equation}
and
\begin{equation}\label{cd_two}
\begin{aligned}
&\; \; \langle \partial_t c^\ve_e, \varphi_2 \rangle_{H^1(G_{e}^\ve)^\prime, H^1}+\langle D_e(b^\ve_{e,3}) \nabla c^\ve_e, \nabla\varphi_2 \rangle_{G_{e,T}^\ve}
-\langle g_e(c^\ve_e,b_e^\ve, \e(u_e^\ve)), \varphi_2 \rangle_{G_{e,T}^\ve} -  \langle F_c(c_e^\ve), \varphi_2 \rangle_{(\partial G)_T}\\
& +\langle \partial_t c^\ve_f, \varphi_2 \rangle_{H^1(G_{f}^\ve)^\prime, H^1}+\langle D_f \nabla c^\ve_f, \nabla\varphi_2 \rangle_{G_{f,T}^\ve}
-\langle\mathcal G(\partial_t u^\ve_f) c_f^\ve, \nabla\varphi_2 \rangle_{G_{f,T}^\ve}-\langle g_f(c^\ve_f), \varphi_2 \rangle_{G_{f,T}^\ve}= 0
\end{aligned}
\end{equation}
for all $\varphi_1\in L^2(0,T; H^1(G^\ve_{e}))^3$ and $\varphi_2\in L^2(0,T; H^1(G\setminus\widetilde\Gamma^\ve))$, and for $\mathcal P$-a.a.\ $\omega \in \Omega$.  Moreover the initial conditions are satisfied in $L^2$-sense, i.e.\
$u_e^\ve (t) \to u^\ve_{e0}$, $\partial_t u_e^\ve(t) \to u_{e0}^1$, $p_e^\ve(0) \to p^\ve_{e0}$, $b_e^\ve(t) \to b_{e0}$, $c_e^\ve(t) \to c_0$ in $L^2(G_e^\ve)$ as $t \to 0$,
$\partial_t u_f^\ve(t)  \to u_{f0}^1$, $c_f^\ve(t) \to c_0$ in $L^2(G_f^\ve)$ as $t \to 0$, $\mathcal P$-almost sure.
\end{definition}

\noindent{\bf Examples of random geomerties}
\begin{itemize}
\item Let $\mathcal{Q}$ be a smooth domain, $\mathcal{Q}\subset(0,1)^3$, and assume that
$\gamma=\mathrm{dist}(\mathcal{Q},\partial (0,1)^3)>0$.  Let $\xi_j$  be i.i.d. random vectors in $\mathbb R^3$ such that $|\xi_j|\leq \gamma/4$, and $\eta_j$, $j\in\mathbb Z^3$, be random variables with values in the interval
$[1/2,1]$. Letting $\mathcal{Q}_j=j+\xi_j+\eta_j\mathcal{Q}$ we define
$$
G_f(\omega)=\bigcup\limits_{j\in\mathbb Z^3}\mathcal{Q}_j(\omega).
$$

\item Let $\mathcal{P}$ be a stationary ergodic point process in $\mathbb R^3$ such that\\
(i) almost surely for any two points $x_j$ and $x_k$ from $\mathcal{P}(\omega)$ the inequality $|x_j-x_k|\geq c>0$ holds  with a deterministic constant $c$;\\
(ii)
 There exists $r>0$ such that the intersection of the process with any ball of radius $r$ is a.s.
 non-empty. We then set $\mathcal{Q}_j=\{x\in\mathbb R^3\,:\, \mathrm{dist}(x,x_j)<\frac12\mathrm{dist}(x,\mathcal{P}(\omega)\setminus x_j)\}$ and define
$$
G_f(\omega)=\bigcup\limits_{j\in\mathbb Z^3}\mathcal{Q}_j(\omega).
$$
\item The last example admits the following modifications: for the same stationary point process $\mathcal{P}$
we consider the Voronoi tessellation
$$
\mathcal{Q}_j(\omega)=\{x\in\mathbb R^3\,:\, \mathrm{dist}(x,x_j)<\mathrm{dist}(x,\mathcal{P}(\omega)\setminus x_j)\}.
$$
Then $\bigcup_{j}\overline{\mathcal{Q}}_j=\mathbb R^3$ and, under the assumptions on  $\mathcal{P}$,
the diameters of the  polyhedrons $\mathcal{Q}_j$ are uniformly bounded and their boundaries are uniformly Lipschitz continuous.  \\
Given $\delta>0$ we then set
$$
G_e(\omega)=\{x\in\mathbb R^3\,:\, \mathrm{dist}(x,\bigcup\limits_j\partial\mathcal{Q}_j)<\delta\}.
$$
Notice that in this case the volume fraction of $G_e$ is of order $\delta$, if $\delta$ is sufficiently small.
This allows to model cell structures with relatively small volume fraction of cell walls and middle lamella.
\end{itemize}

\section{Main results}\label{main_results}
The main result of the paper is the derivation of the macroscopic equations for   the microscopic problem  \eqref{eq_codif}--\eqref{exbou_co} using methods of stochastic homogenization.

First we shall introduce the following notations. Denote by  $\partial_\omega^j$  the  generator of a strongly continuous group of unitary operators in $L^2(\Omega)$ associated with $\mathcal T_x$ along  $e_j$-direction, i.e.\
$$
\partial_\omega^j u(\omega) = \lim\limits_{\delta \to 0} \frac { u(\T_{\delta e_j} \omega) - u(\omega) }{\delta}.
$$
The domains of $\partial_\omega^j$, with $j=1,2,3$,  are dense in $L^2(\Omega)$. We denote $\nabla_\omega u = (\partial_\omega^1 u,  \partial_\omega^2 u, \partial_\omega^3 u)^T$ and $H^1_\T(\Omega) = \{ v \, : \,   v, \nabla_\omega v \in L^2(\Omega) \}$.
By $C_{\T} (\Omega)$ we denote the space of functions with continuous realisations and $C^1_\T(\Omega)$ defines the set of  functions from  $C_{\T} (\Omega)$ such that   $(\partial_\omega^j u) \in C_{\T} (\Omega)$, for $j=1,2,3$.

First we introduce  the spaces of potential and solenoidal vector fields:
 $$L^2_{\text{pot}} (\Omega)= \overline{ \{\nabla_\omega u \,: \, u \in C^1_{\T}(\Omega) \} }\,\mbox{ and }\,  L^2_{\text{sol}} (\Omega)= \big(L^2_{\text{pot}} (\Omega)\big)^{\perp},$$
where the closure in the definition of $L^2_{\text{pot}} (\Omega)$ is with respect to the  $L^2(\Omega)$-norm, see \cite{ZKO1994}. To introduce  correctors we also need the space of functions whose realisations are discontinuous along the surface $\widetilde \Gamma(\omega)$.  We  define
$$
L^2_{\rm pot, \Gamma}(\Omega) =\overline{\{  \nabla_x u(\mathcal T_x\omega)\big|_{x=0},\, \;\;   u(\mathcal T_x\omega) \in H_{\rm loc}^1(\mathbb R^3\setminus \widetilde \Gamma(\omega) )\cap C^1(\mathbb R^3\setminus \widetilde \Gamma(\omega) )\} }
$$
with the norm
$$
\|u\|^2=\int_\Omega\int_{[0,1]^3\setminus\widetilde \Gamma(\omega)}|\nabla_x u(\mathcal T_x\omega)|^2
dxd\mathcal P,
$$
and
$$
 L^2_{\rm sol,  \Gamma}(\Omega) = \big(L^2_{{\rm pot}, \Gamma}(\Omega)\big)^{\perp}.
 $$
We also denote
$$
C_{\T,  \Gamma} (\Omega) = \{ u \; :\;   u(\mathcal T_x\omega) \in C(\mathbb R^3\setminus \widetilde \Gamma(\omega) ) \} .
$$

We start with the definition of   effective coefficients  for macrosocpic poro-elastic equations,  which  are  obtained by deriving the macroscopic equations for the microscopic problem \eqref{equa_cla}--\eqref{exbou_co}.
The macroscopic elasticity tensor ${\bf E}^{\rm hom}= (E_{ijkl}^{\rm hom}) $ and permeability tensor  $K_p^{\rm hom}=(K_{p, ij}^{ \rm hom})$, along  with
$K_u=(K_{u, ij})$, are defined by
\begin{equation}\label{effective}
\begin{aligned}
& E_{ijkl}^{\rm hom} (b_{e,3} ) =\int_{\Omega} \big[\widetilde E_{ijkl}(\omega, b_{e,3}) + \big(\widetilde E(\omega, b_{e,3}) W^{kl}_{e, {\rm sym}} \big)_{ij}\big] \, \chi\big._{\Omega_e} d \mathcal P(\omega),\\
&K_{p, ij}^{\rm hom} = \int_{\Omega} \big[ \widetilde K_{p,ij}(\omega) + \big(\widetilde K_{p}(\omega) W_p^j\big)_i \big] \, \chi\big._{\Omega_e}\,  d \mathcal P(\omega), \\
& K_{u, ij}  =  \int_{\Omega}  \big[\delta_{ij} - \big(\widetilde K_{p}(\omega)W_u^j \big)_i\big] \, \chi\big._{\Omega_e} \, d\mathcal  P(\omega),
\end{aligned}
\end{equation}
where  $\chi\big._{\Omega_e}$ stands for the characteristic function of the set $\Omega_e$,  $W^{kl}_{e, {\rm sym}}$ denotes the symmetric part of the matrix  $W^{kl}_e$, and   $W^{kl}_e\in L^\infty(G_T; L^2_{\rm pot}(\Omega)^3)$  together with  $ W^k_p,  W^k_u\in  L^2_{\rm pot} (\Omega)$   are solutions of  cell problems
\begin{equation}\label{unit_1}
\begin{aligned}
&\int_{\Omega} \widetilde {\bf E}\big(\omega, b_{e,3}) (W^{kl}_{e, {\rm sym}}   + {\bf b}_{kl}\big)\,  \Phi \, \chi\big._{\Omega_e} d\mathcal P(\omega) = 0  && \quad \text{ for all } \Phi \in L^2_{\rm pot}(\Omega)^3, \, \text{ a.a.\  } (t,x) \in G_T,  \\
& \int_{\Omega} \widetilde K_p(\omega)\big(W_p^k + e_k\big) \,  \zeta \, \chi\big._{\Omega_e} d\mathcal P(\omega) = 0
&& \quad \text{ for all } \zeta \in L^2_{\rm pot}(\Omega), \\
& \int_{\Omega} \big(\widetilde K_p(\omega) W^k_u  -  e_k\big) \,  \zeta\, \chi\big._{\Omega_e}   d\mathcal P(\omega) = 0  && \quad \text{ for all } \zeta \in L^2_{\rm pot}(\Omega),
\end{aligned}
\end{equation}
for $k,l=1,2,3$, with ${\bf b}_{kl} = \frac 12 (e_k \otimes e_l + e_l \otimes e_k)$ and $\{ e_j\}_{j=1}^3$ is the canonical  basis of $\mathbb R^3$.

We also define $Q(\partial_t u_f)$  as
\begin{equation}\label{q_uf}
\begin{aligned}
&Q(\partial_t u_f) = \int_{\Omega}  \partial_t u_f\,  \chi\big._{\Omega_f} d \mathcal P(\omega) - \int_{\Omega} \widetilde K_p(\omega) Q_f(\omega, \partial_t u_f) \, \chi\big._{\Omega_e} \, d\mathcal P(\omega),
\end{aligned}
\end{equation}
where $Q_f(\cdot, \partial_t u_f) \in L^2_{\rm pot} (\Omega)$ is a solution of the problem
\begin{equation}\label{two-scale_qf}
\begin{aligned}
&\int_{\Omega} \big(\widetilde K_p(\omega)  \, Q_f  \, \chi\big._{\Omega_e}  + \partial_t u_f \, \chi\big._{\Omega_f}  \big)\, \zeta \, d\mathcal P(\omega)  = 0 \quad \text{ for } \zeta \in L^2_{\rm pot}(\Omega).
\end{aligned}
\end{equation}

Then the macroscopic equations for the microscopic problem \eqref{equa_cla}--\eqref{exbou_co} are formulated as follows.
\begin{theorem}\label{main_1}
A sequence of solutions $\{u_e^\ve, p_e^\ve, \partial_t u_f^\ve, p^\ve_f \}$ of the microscopic problem  \eqref{equa_cla}--\eqref{exbou_co} converges to a solution  $u_e \in H^2(0,T; L^2(G)) \cap L^2(0,T; H^1(G))$,
$p_e\in L^2(0,T; H^1(G))\cap H^1(0,T; L^2(G))$, $\partial_t u_f \in  L^2(G_T; H^1(\Omega))\cap H^1(0,T; L^2(G\times \Omega))$,
$p_f\in L^2(G_T\times \Omega)$ of the macroscopic equations
\begin{equation}\label{macro_ue}
\begin{aligned}
&\vartheta_e  \rho_e \partial_t^2 u_e - {\rm div} ( {\bf E}^{\rm hom}(b_{e,3}) \e(u_e)) + \nabla p_e+ \int_{\Omega} \partial_t^2 u_f \, \chi\big._{\Omega_f} d\mathcal P(\omega) = 0 && \text{ in } G_T, \\
&\vartheta_e  \rho_p \partial_t p_e - {\rm div} ( K_p^{\rm hom } \nabla p_e - K_ u \partial_t u_e - Q(\partial_t u_f)) =0 && \text{ in }  G_T,
\end{aligned}
\end{equation}
with boundary and initial conditions
\begin{equation}\label{macro_pe}
\begin{aligned}
& {\bf E}^{\rm hom}(b_{e,3}) \e(u_e) \,  {n} = F_u  && \text{ on } (0,T)\times  \partial G, \\
& ( K_p^{\rm hom} \nabla p_e - K_u \partial_t u_e - Q(\partial_t u_f) )\cdot {n} = F_p
&& \text{ on }  (0,T) \times \partial G, \\
& u_e(0) = u_{e0}, \quad \partial_t u_e (0) = u_{e0}^1, \quad p_e(0) = p_{e0}  && \text{ in }  G,
\end{aligned}
\end{equation}
and the  equations  for  the flow velocity
\begin{equation}\label{macro_two-scale_uf}
\begin{aligned}
&\int_{\Omega} \Big[
\rho_f \partial^2_t u_f\, \varphi + \mu \, \e_\omega(\partial_t u_f)\e_\omega (\varphi )
+ \nabla p_e\,  \varphi\Big]  \chi_{\Omega_f}  d\mathcal P(\omega)   - \int_{\Omega} P_e^1\,   \chi_{\Omega_e}\,  \varphi \,  d\mathcal P(\omega) = 0, \\
& {\rm  div}_\omega \partial_t u_f =0  \hspace{4.1 cm }  \text{ in } \; G_T\times \Omega, \\
 &\partial_t u_ f (0) = u^1_{f0} \hspace{4 cm }  \text{ in } \; G\times \Omega,
\end{aligned}
\end{equation}
$$\Pi_\tau \partial_t u_f(t,x, \T_{\widetilde x} \omega)  = \Pi_\tau \partial_t u_e(t,x) \; \; \text{ for } \; (t,x) \in G_T \; \text{ and } \; \widetilde x \in \Gamma(\omega),  \; \;   \mathcal P{\text -a.s. }   \text{ in }  \Omega,
$$
 and
\begin{equation}\label{expres_Pe1}
P_e^1(t,x,\omega)
= \sum_{k=1}^3 \partial_{x_k} p_e(t,x) W_p^k(\omega) + \partial_t u_e^k(t,x) W_u^k(\omega)
+  Q_f(\omega, \partial_t u_f),
\end{equation}
for all $\varphi \in L^2(G_T; H^1(\Omega))^3$,  with  ${\rm div}_\omega \varphi = 0 $ in $G_T\times \Omega$,  and $\Pi_\tau \varphi(t,x, \T_{\widetilde x} \omega) =0$ for $(t,x) \in G_T$, $\widetilde x \in \Gamma(\omega)$ and  $\mathcal P$-a.s.~in  $\Omega$.
\\
Here    $\e_\omega(\psi) =\big(1/2(\partial_\omega^j \psi_l + \partial_\omega^l \psi_j)\big)_{j,l=1,2,3}$  denotes  a symmetric gradient for $\psi \in H^1(\Omega)^3$,  $\vartheta_e = \int_{\Omega} \chi_{\Omega_e} d\mathcal P(\omega)$, and ${\rm div}_\omega \psi = \partial_\omega^1 \psi_1 + \partial_\omega^2 \psi_2 + \partial_\omega^3 \psi_3$.
 \end{theorem}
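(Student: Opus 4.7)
The plan is to obtain uniform $\varepsilon$-estimates, extract stochastic two-scale limits, identify the correctors through the cell problems, and pass to the limit in the weak formulation \eqref{weak_u_ef}. First I would derive energy estimates by testing \eqref{weak_u_ef} with $\phi = \partial_t u_e^\ve$, $\psi = p_e^\ve$, $\eta = \partial_t u_f^\ve$ (which is an admissible triple since $\Pi_\tau \partial_t u_e^\ve = \Pi_\tau\partial_t u_f^\ve$ on $\Gamma^\ve$), using the coercivity from Assumption A2--A3, a uniform Korn inequality on $G_e^\ve$ and $G_f^\ve$ (available thanks to the uniform Lipschitz control in Assumption~\ref{assum1}), the $C^{1,\alpha}$-trace inequality on $\partial G$ to absorb $F_u,F_p$, and Gronwall's lemma. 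This yields uniform bounds on $\|\partial_t u_e^\ve\|_{L^\infty(L^2(G_e^\ve))}$, $\|\e(u_e^\ve)\|_{L^\infty(L^2(G_e^\ve))}$, $\|p_e^\ve\|_{L^2(H^1(G_e^\ve))}$, $\|\partial_t p_e^\ve\|_{L^2(L^2(G_e^\ve))}$, $\|\partial_t u_f^\ve\|_{L^\infty(L^2(G_f^\ve))}$ and $\|\ve\,\e(\partial_t u_f^\ve)\|_{L^2(L^2(G_f^\ve))}$.

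Next, using an $\ve$-uniform extension operator on $G_e^\ve$ (granted by Assumption~\ref{assum1}), I would pass to stochastic two-scale limits in the sense of Zhikov--Piatnitski, obtaining $u_e\in H^2(0,T;L^2(G))\cap L^2(0,T;H^1(G))$, $p_e\in L^2(0,T;H^1(G))\cap H^1(0,T;L^2(G))$, correctors $U_e^0\in L^2(G_T;L^2_{\rm pot}(\Omega))^3$ and $P_e^0\in L^2(G_T;L^2_{\rm pot}(\Omega))$, and a two-scale flow limit $\partial_t u_f(t,x,\omega)\in L^2(G_T;H^1(\Omega))^3$ with ${\rm div}_\omega\partial_t u_f=0$ (the $\ve^2$-scaling of the Stokes viscous term places the limit at the microscopic scale). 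The tangential transmission condition $\Pi_\tau\partial_t u_e^\ve=\Pi_\tau\partial_t u_f^\ve$ on $\Gamma^\ve$ is transferred to $\Pi_\tau\partial_t u_f(t,x,\T_{\widetilde x}\omega)=\Pi_\tau\partial_t u_e(t,x)$ on $\Gamma(\omega)$ by means of the Palm measure and the trace inequality from Lemma~\ref{converg_bound_Gamma}. To identify the macroscopic system I would insert test functions of oscillating form $\phi(t,x)+\ve\phi_1(t,x,\T_{x/\ve}\omega)$, $\psi(t,x)+\ve\psi_1(t,x,\T_{x/\ve}\omega)$ and $\eta(t,x,\T_{x/\ve}\omega)$ with potential correctors $\phi_1,\psi_1$, and $\eta$ obeying ${\rm div}_\omega\eta=0$ together with the matching tangential constraint on $\Gamma(\omega)$. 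Passing to the two-scale limit yields a coupled macro/micro system whose $\omega$-dependence decouples through the cell problems \eqref{unit_1} and \eqref{two-scale_qf}, and choosing $\phi_1,\psi_1$ supported in $\Omega_e$ and localised test functions in $\Omega_f$ produces \eqref{macro_ue}--\eqref{macro_two-scale_uf} with effective coefficients \eqref{effective}, flux term \eqref{q_uf}, and corrector expression \eqref{expres_Pe1}. The surface integrals over $\Gamma^\ve$ arising from the transmission terms $\langle\partial_t u_f^\ve\cdot n,\psi\rangle_{\Gamma^\ve_T}$ and $\langle p_e^\ve,\eta\cdot n\rangle_{\Gamma^\ve_T}$ are first rewritten as volume integrals via Stokes' theorem exploiting ${\rm div}\,\partial_t u_f^\ve=0$ and then passed to the limit with the Palm-measure trace formula of Lemma~\ref{converg_bound_Gamma}.

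The main obstacle is the passage to the limit in the nonlinear stress $\mathbf E^\ve(b_{e,3}^\ve)\,\e(u_e^\ve)$, since $\e(u_e^\ve)$ converges only weakly two-scale. Here the temporal-convolution structure $\mathcal K(b_{e,3}^\ve)(t,x)=\int_0^t\kappa(t-\tau)b_{e,3}^\ve(\tau,x)d\tau$ together with the smoothness of $\kappa$ and the $C^2_b$-regularity of $\widetilde{\mathbf E}_1(\omega,\cdot)$ (Assumption A2) transforms any strong stochastic two-scale convergence of $b_{e,3}^\ve$ into strong stochastic two-scale convergence of the coefficient $\mathbf E^\ve(\cdot,b_{e,3}^\ve(\cdot))$, uniformly in $t$; combined with the weak two-scale convergence $\e(u_e^\ve)\rightharpoonup \e(u_e)+W_{e,\mathrm{sym}}$, one passes to the product in the usual weak--strong sense, recovering $\mathbf E^{\rm hom}(b_{e,3})\,\e(u_e)$ after inserting the representation of $W_e$ via the cell problem \eqref{unit_1}. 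The required strong two-scale convergence of $b_{e,3}^\ve$, as well as of $\partial_t u_f^\ve$ needed to close the nonlinear convection term, is exactly what is established in Section~\ref{strong_convergence}; together with the macroscopic equations for the reaction--diffusion--convection system derived in Section~\ref{macro_diffusion}, this closes the argument and uniqueness of the limiting system (via standard energy methods for the coupled macro system) upgrades subsequential convergence to convergence of the full family.
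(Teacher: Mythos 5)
Your overall strategy matches the paper's: a priori energy estimates via admissible test functions, extension operators, weak stochastic two-scale compactness, a two-stage choice of oscillating test functions (scale-$\ve$ functions to derive the corrector equations, then macroscopic-plus-microscale functions to produce the limit system), rewriting surface integrals as volume integrals, and identifying the tangential transmission condition on $\Gamma(\omega)$ through the Palm measure and the trace inequality of Lemma~\ref{converg_bound_Gamma}. The identification of effective coefficients via the cell problems \eqref{unit_1}, \eqref{two-scale_qf} and the representation \eqref{expres_Pe1} is also in line with the paper.

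There is, however, a logical misattribution in your final paragraph that would create a circularity if taken literally. The strong convergence of $b_{e,3}^\ve$ needed to pass to the limit in the coefficient $\mathbf E^\ve(b_{e,3}^\ve)$ is \emph{not} the content of Section~\ref{strong_convergence}; it is the ordinary strong $L^2(G_T)$ convergence established in Lemma~\ref{convergence_b_c} by a Kolmogorov/Aubin--Lions compactness argument (it is equivalent to strong two-scale convergence here only because the limit $b_e(t,x)$ does not depend on $\omega$). Section~\ref{strong_convergence} establishes strong stochastic two-scale convergence of $\e(u_e^\ve)$, $\nabla p_e^\ve$ and $\partial_t u_f^\ve$, and its proof (Lemma~\ref{lem_strong_two-scale}) relies on the macroscopic equations that Theorem~\ref{main_1} is supposed to deliver, so invoking it inside the proof of Theorem~\ref{main_1} would be circular. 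Also, the ``nonlinear convection term'' $\mathcal G(\partial_t u_f^\ve)$ is part of the calcium transport equation handled in Theorem~\ref{th:reactions}, not of the poroelastic--Stokes system of Theorem~\ref{main_1}; the weak two-scale convergence of $\e(u_e^\ve)$ and $\partial_t u_f^\ve$ together with the strong convergence of $b_{e,3}^\ve$ already suffices for Theorem~\ref{main_1}, exactly as you argued in the first half of that paragraph. If you strike the reference to Section~\ref{strong_convergence} and cite the compactness Lemma~\ref{convergence_b_c} instead, the proof outline is consistent with the paper's.
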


\emph{Remark.} Notice  that the equations for correctors \eqref{unit_1} and  \eqref{two-scale_qf}, as well as  problem   \eqref{macro_two-scale_uf} for $\partial_t u_f$ are formulated in the weak form as integral identities.  This is due to the fact that the equations  are define on $\Omega_e\subset \Omega$ and $\Omega_f \subset \Omega$, respectively,  and have strong formulation only for  $\mathcal P$-a.a.\  realisations $\omega \in \Omega$. \\

The  homogenized coefficients  in  reaction-diffusion-convection equations that will be obtained by deriving macroscopic equations for  microscopic problem \eqref{eq_codif}, \eqref{exbou_co}, are defined as
\begin{equation}
\begin{aligned}
& D_{b, {\rm eff}}^{ij}(b_{e,3}) = \int_{\Omega} \Big[ D^{ij}_b(b_{e,3}) + (D_b(b_{e,3})\,  w^j_b  )_i\Big]\chi_{\Omega_e}  d\mathcal P(\omega),
\\
& D^{ij}_{\rm eff} (b_{e,3})= \int_{\Omega} \Big[ D^{ij}(b_{e,3}, \omega) +  \big(D(b_{e,3}, \omega)\, w^j \big)_i \Big]   d \mathcal P  (\omega),
\end{aligned}
\end{equation}
where $D(b_{e,3}, \omega) = D_e(b_{e,3}) \chi\big._{\Omega_e}(\omega) + D_f \chi\big._{\Omega_f}(\omega)$  for $\omega \in \Omega$,  with $w^j_b\in L^2_{\rm pot}(\Omega)$ and $w^j\in L^2_{\rm pot,  \Gamma}(\Omega)$ are  solutions of the cell problems
\begin{equation}\label{unit_cell_prob_b}
\begin{aligned}
 &\int_{\Omega}  D_b(b_{e,3}) ( w^j_b + e_j) \, \zeta \, \chi\big._{\Omega_e}   d\mathcal P(\omega) = 0 \quad \text{ for all }\;  \zeta \in L^2_{\rm pot}(\Omega),
\end{aligned}
\end{equation}
and
\begin{equation}\label{unit_cell_prob_c}
\begin{aligned}
& \int_{\Omega}  D(\omega, b_{e,3}) (w^j + e_j) \, \eta \,  d\mathcal P(\omega) = 0 \quad  \text{ for all } \; \eta \in L^2_{\rm pot,  \Gamma}(\Omega) .
\end{aligned}
\end{equation}
 The effective velocity is defined  by
 $$u_{\rm eff}(t,x) = \int_{\Omega}\Big( \mathcal G(\partial_t u_f) - D_f  Z(t,x,\omega) \Big) \chi\big._{\Omega_f} \, d\mathcal P(\omega), $$
 where $Z \in L^\infty(G_T; L^2_{\rm pot} (\Omega))$ satisfies
\begin{equation}\label{unit_cell_prob_cf}
\begin{aligned}
& \int_{\Omega} ( D_f  Z - \mathcal G(\partial_t u_f))\, \zeta \, \chi\big._{\Omega_f} \, d\mathcal P(\omega)  = 0  \quad \text{ for all } \zeta \in L^2_{\rm pot} (\Omega), \; \text{for a.a.\ } (t,x) \in G_T.
\end{aligned}
\end{equation}

\begin{theorem}\label{th:reactions}
A sequence of solutions of microscopic problem   \eqref{eq_codif}, \eqref{exbou_co} converges to a solution $b_e, c \in L^2(0,T; H^1(\Omega))$, with $\partial_t b_e , \partial_t c \in L^2(0,T; (H^{1}(\Omega))^\prime)$,  of the macroscopic equations
\begin{equation}\label{macro_concentration}
\begin{aligned}
\vartheta_e  \partial_t b_e - {\rm div} ( D_{b,{\rm eff}}(b_{e,3}) \nabla b_e ) &= \hspace{-0.1 cm }  \int_{\Omega} g_b(c, b_e,   \mathbb{U}(b_e, \omega)  \e(u_e))  \chi\big._{\Omega_e}  d\mathcal P(\omega) + \hspace{-0.1 cm } \int_\Omega R(b_e) d\boldsymbol{\mu}(\omega)    && \text{in } G_T, \\
\hspace{-0.4 cm }  \partial_t c - {\rm div} ( D_{\rm eff} (b_{e,3})\nabla c  - u_{\rm eff}  c)& =\vartheta_f g_f(c)+  \int_{\Omega} g_e(c, b_e,   \mathbb U(b_e, \omega)\e(u_e))  \chi\big._{\Omega_e}  d\mathcal P(\omega)  && \text{in } G_T,\\
 D_{b,\rm eff} (b_{e,3})\nabla b_e   \cdot n& = F_b(b_e)  \quad && \text{on } (\partial G)_T, \\
 ( D_{\rm eff}(b_{e,3}) \nabla c  - u_{\rm eff}  c)  \cdot n &= F_c(c)  \quad && \text{on }  (\partial G)_T, \\
 b_e(0,x) = b_{e0}(x), \quad & c(0,x)= c_0(x)  \quad && \text{in }  G,
\end{aligned}
\end{equation}
where $\vartheta_j = \int_\Omega \chi\big._{\Omega_j}(\omega)  \, d \mathcal P(\omega)$, for $j=e,f$, and
$$\mathbb U(b_e, \omega) =\left\{\mathbb U_{klij}(b_e, \omega)\right\}_{k,l,i,j=1}^3 = \left\{ b_{kl}^{ij} + W^{ij}_{e,{\rm sym}, kl}\right\}_{k,l,i,j=1}^3, $$
with  $W^{ij}_e$ being solutions of cell problems \eqref{unit_1} and ${\bf b}_{kl} = (b_{kl}^{ij})_{i,j=1}^3$,  where ${\bf b}_{kl} = e_k\otimes e_l$. 
   \end{theorem}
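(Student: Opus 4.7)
\bigskip
\noindent\textbf{Proof sketch for Theorem~\ref{th:reactions}.}
The plan is to pass to the stochastic two-scale limit in the weak formulations \eqref{cd_one}--\eqref{cd_two}, using the a priori bounds from Section~\ref{apriori_1}, the compactness results from Section~\ref{convergence}, and the strong stochastic two-scale convergence of $\e(u_e^\ve)$ and $\partial_t u_f^\ve$ proved in Section~\ref{strong_convergence}. First, from the uniform estimates and standard extension arguments, I extract a subsequence along which the extended fields $\tilde b_e^\ve$ and $c^\ve = c_e^\ve \chi_{G_e^\ve}+c_f^\ve \chi_{G_f^\ve}$ converge stochastically two-scale to limits $b_e(t,x)$ and $c(t,x)$ that are $\omega$-independent, together with $\nabla \tilde b_e^\ve \rightharpoonup \chi_{\Omega_e}(\nabla b_e + B_e)$ and $\nabla c^\ve \rightharpoonup \nabla c + C$, where $B_e\in L^2(G_T;L^2_{\rm pot}(\Omega))^3$ and $C\in L^2(G_T;L^2_{\rm pot,\Gamma}(\Omega))$; the use of $L^2_{\rm pot,\Gamma}(\Omega)$ here is what allows the macroscopic $c$ to be continuous across $\widetilde\Gamma(\omega)$ while still permitting the microscopic jump of $c^\ve$ there.

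Next, I would test \eqref{cd_one}--\eqref{cd_two} against oscillating functions of the form $\varphi(t,x)+\ve\varphi^1(t,x,\mathcal T_{x/\ve}\omega)$ with $\varphi^1\in C^\infty(G_T; C^1_{\mathcal T,\Gamma}(\Omega))$ and, varying $\varphi^1$ with $\varphi=0$, derive the cell problems \eqref{unit_cell_prob_b}, \eqref{unit_cell_prob_c}, and \eqref{unit_cell_prob_cf}; these identify the correctors as $B_e=\sum_j\partial_{x_j}b_e\, w_b^j$, $C=\sum_j\partial_{x_j}c\, w^j$, and the contribution of $\mathcal G(\partial_t u_f^\ve)c_f^\ve$ via $Z$, producing the effective tensors $D_{b,\rm eff}$, $D_{\rm eff}$ and the effective velocity $u_{\rm eff}$. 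Setting $\varphi^1=0$ then yields the macroscopic equations \eqref{macro_concentration}. For the convection term, the strong stochastic two-scale convergence of $\partial_t u_f^\ve$, together with Lipschitz continuity and boundedness of $\mathcal G$, allow one to pass to the limit in $\mathcal G(\partial_t u_f^\ve)c_f^\ve$ as the product of a strongly and a weakly two-scale convergent sequence. For the microscopic surface integral $\ve\langle R(b_e^\ve),\varphi_1\rangle_{\Gamma^\ve_T}$, I would invoke Palm measure machinery together with the trace inequality of Lemma~\ref{converg_bound_Gamma} to pass to $\int_G\int_\Omega R(b_e)\,d\boldsymbol{\mu}(\omega)\,\varphi\,dx$, and the boundary terms on $(\partial G)_T$ pass by classical trace and extension arguments.

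The hard part is passing to the limit in the nonlinear reaction terms $g_b(c_e^\ve,b_e^\ve,\e(u_e^\ve))$ and $g_e(c_e^\ve,b_e^\ve,\e(u_e^\ve))$. Under Assumption~\ref{assumptions1}.A5, these depend linearly on $\e(u_e^\ve)$ for fixed $(c_e^\ve,b_e^\ve)$, so weak convergence of the strain is insufficient: one needs the strong stochastic two-scale convergence of $\e(u_e^\ve)$ supplied by Section~\ref{strong_convergence}, combined with strong $L^2(G_T)$-convergence of $b_e^\ve$ and $c^\ve$. The latter follows from Aubin--Lions-type compactness, using the uniform bound on $\nabla b_e^\ve,\nabla c^\ve$ in $L^2$ together with the bound on $\partial_t b_e^\ve,\partial_t c^\ve$ in $L^2(0,T;(H^1)')$ obtained from the weak formulations and the growth bounds in A5; the local Lipschitz properties of $g_b,g_e$ then give convergence of the reaction terms to $g_b(c,b_e,\mathbb U(b_e,\omega)\e(u_e))$ and $g_e(c,b_e,\mathbb U(b_e,\omega)\e(u_e))$, where the identification $\mathbb U(b_e,\omega)\e(u_e)$ of the two-scale limit of $\e(u_e^\ve)$ comes from the cell problem \eqref{unit_1} for $W_e^{ij}$ derived in Theorem~\ref{main_1}.

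Finally, nonnegativity and the $L^\infty$-bounds for the limits $b_e$ and $c$ are inherited from the corresponding microscopic properties via lower semicontinuity, and the initial conditions pass to the limit thanks to the continuity in time given by the a priori estimates. Non-locality in time of ${\bf E}^\ve$ through the smooth kernel $\kappa$ causes no additional difficulty because convolution with $\kappa$ preserves strong convergence.
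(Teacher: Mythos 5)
Your proposal follows the paper's proof essentially step by step: oscillating test functions of the form $\varphi + \ve\varphi^1$ in the weak formulations to extract the cell problems \eqref{unit_cell_prob_b}, \eqref{unit_cell_prob_c}, \eqref{unit_cell_prob_cf} and the effective coefficients, Palm-measure machinery via Lemma~\ref{converg_bound_Gamma} for the surface term $\ve\langle R(b_e^\ve),\varphi\rangle_{\Gamma^\ve_T}$, and the combination of strong stochastic two-scale convergence of $\e(u_e^\ve)$ and $\partial_t u_f^\ve$ (Lemma~\ref{lem_strong_two-scale}) with strong $L^2(G_T)$-convergence of $b_e^\ve$, $c^\ve$ to pass to the limit in the nonlinear reaction and convection terms. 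The one technical deviation worth flagging concerns how you obtain the strong $L^2(G_T)$-compactness: you invoke Aubin--Lions with a uniform $L^2(0,T;(H^1)')$-bound on $\partial_t b_e^\ve$, $\partial_t c^\ve$, but because the perforated domain $G_e^\ve$ (and hence $H^1(G_e^\ve)$ and its dual) varies with $\ve$, such a bound does not transfer automatically to the extended functions on $G$; the paper instead relies on the time-shift estimate \eqref{estim_time_h} together with the Kolmogorov compactness criterion, which is the standard and more robust tool in perforated geometries.
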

  Here $\boldsymbol{\mu}$ is the Palm measure of the random measure $\mu_\omega$ of surfaces $\Gamma(\omega)$, see e.g.\ \cite{Daley_VereJones_1988} for the definition of Palm measure.

\section{A priori estimates}\label{apriori_1}

Considering assumptions on $G^\ve_j$, with $j=e,f$, in the same way as in the periodic case \cite{Andrey_Mariya}, for $\mathcal P$-a.a.\ realisations $\omega \in \Omega$,  we  obtain  the  existence, uniqueness  and   a priori estimates, uniform in $\ve$,  for solutions of  microscopic problem \eqref{eq_codif}--\eqref{exbou_co}.

\begin{lemma}  Under Assumption~\ref{assumptions1}
there exists a unique weak  solution of  microscopic problem  \eqref{eq_codif}--\eqref{exbou_co}.
\end{lemma}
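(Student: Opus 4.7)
Since all statements hold $\mathcal P$-almost surely, I fix $\omega\in\Omega$ for which the domains $G_e^\ve, G_f^\ve$ and surfaces $\Gamma^\ve, \widetilde\Gamma^\ve$ are uniformly Lipschitz by Assumption~\ref{assum1}; the problem then becomes a deterministic existence/uniqueness question on a fixed microstructure, which I treat along the same lines as the periodic case \cite{Andrey_Mariya}: a Banach fixed-point argument built over Galerkin approximations for each of the two main blocks. The decoupling variable is $\tilde b\in L^2(0,T;L^2(G_e^\ve))^3$, restricted to a ball of $L^\infty$-radius $M$; given such $\tilde b$, I freeze the elasticity and diffusion tensors as ${\bf E}^\ve(x,\mathcal K\tilde b_3)$, $D_b(\tilde b_3)$, $D_e(\tilde b_3)$, where the memory kernel $\mathcal K\tilde b_3(t,x)=\int_0^t\kappa(t-\tau)\tilde b_3(\tau,x)\,d\tau$ of Assumption {\bf A2} provides a crucial time smoothing, with $\kappa(0)=0$ rendering $\tilde b_3\mapsto {\bf E}^\ve(\cdot,\mathcal K\tilde b_3)$ Lipschitz from $L^2$ into $C([0,T];L^2)$ (and similarly for $D_b, D_e$).

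With these frozen coefficients the mechanical block \eqref{equa_cla} becomes a linear coupled poroelastic--Stokes system. I apply a Galerkin scheme using test-pairs $(\phi,\eta)\in H^1(G_e^\ve)^3\times H^1(G_f^\ve)^3$ satisfying $\Pi_\tau\phi=\Pi_\tau\eta$ on $\Gamma^\ve$ and $\mathrm{div}\,\eta=0$ in $G_f^\ve$, together with $\psi\in H^1(G_e^\ve)$. Testing \eqref{weak_u_ef} by $(\partial_t u_e^{\ve,N},p_e^{\ve,N},\partial_t u_f^{\ve,N})$, the interface contributions produced by the mass balance on $\Gamma^\ve$ and by the pairings $\langle\nabla p_e^\ve,\phi\rangle$ and $\langle p_e^\ve,\eta\cdot n\rangle_{\Gamma^\ve}$ combine consistently and yield the natural energy identity controlling $\rho_e\|\partial_t u_e^\ve\|^2+\langle {\bf E}^\ve\e(u_e^\ve),\e(u_e^\ve)\rangle+\rho_p\|p_e^\ve\|^2+\langle K_p^\ve\nabla p_e^\ve,\nabla p_e^\ve\rangle+\rho_f\|\partial_t u_f^\ve\|^2+\mu\ve^2\|\e(\partial_t u_f^\ve)\|^2$. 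Ellipticity of ${\bf E}^\ve$ and $K_p^\ve$ (Assumptions {\bf A2}--{\bf A3}), Korn's inequality on $G_e^\ve$ and $G_f^\ve$ with constants uniform in $\omega$ by Assumption~\ref{assum1}, the regularity of the external forces from {\bf A6}, and Gronwall's lemma give existence, uniqueness, and a priori estimates at the linear mechanical level; the pressure $p_f^\ve$ is then reconstructed from the Stokes momentum equation by De Rham's theorem.

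Using these mechanical data, the chemical system \eqref{eq_codif} is a semilinear parabolic problem. A further Galerkin scheme, combined with a truncation of $g_b, g_e, g_f, R, F_b, F_c$ at level $M$ where they are locally Lipschitz by {\bf A5}, produces approximate solutions bounded in $L^\infty(0,T;L^2)\cap L^2(0,T;H^1)$. The quasi-positivity conditions in {\bf A5} yield $b_{e,j}^\ve\geq 0$ and $c^\ve\geq 0$ by testing with the negative parts, while the linear growth of the reactions, together with $\|\mathcal G\|_\infty\leq\rho$ (Assumption {\bf A4}), gives uniform $L^\infty$-bounds via Moser iteration, after which the truncations can be removed. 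The resulting map $\Phi:\tilde b\mapsto b_e^\ve$ thus sends the ball of radius $M$ into itself and, by the local Lipschitz estimates of {\bf A5} together with the time smoothing from $\kappa$, is a contraction on a short interval $[0,T_*]$; the uniform a priori bounds then allow continuation to $[0,T]$. Uniqueness of the fully coupled problem follows from the energy identity for the difference of two solutions and Gronwall. The main obstacle I expect is handling the cross-coupling term $({\bf E}^\ve(\mathcal K b_{e,3}^{\ve,1})-{\bf E}^\ve(\mathcal K b_{e,3}^{\ve,2}))\e(u_e^{\ve,1})$ in the mechanical energy inequality for the difference of two solutions: this is not Lipschitz in $b_{e,3}$ at the $L^2$ level alone, and it is precisely the $\kappa(0)=0$ smoothing of {\bf A2} that supplies the extra time factor needed for Gronwall to close.
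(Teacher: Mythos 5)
The paper's own proof of this lemma is a single sentence: fix a realisation $\omega$ and follow the argument of Theorem~7 in \cite{Andrey_Mariya}. Your proposal reconstructs exactly this — reduction to the deterministic periodic-case argument via Assumption~\ref{assum1}, a fixed-point iteration between the mechanical block \eqref{equa_cla} and the chemical block \eqref{eq_codif}, Galerkin approximations and energy estimates for each, and recovery of the Stokes pressure by duality — so you are taking essentially the same route the paper intends, just spelled out in detail.

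One small technical imprecision worth flagging: you say $\kappa(0)=0$ is what renders $\tilde b_3 \mapsto {\bf E}^\ve(\cdot,\mathcal K\tilde b_3)$ Lipschitz from $L^2$ into $C([0,T];L^2)$. That mapping property already follows from $\kappa$ being smooth (indeed from $\kappa\in L^2(0,T)$ alone) and does not use $\kappa(0)=0$. The role of $\kappa(0)=0$ is different and shows up exactly where you say the difficulty lies — in controlling $\partial_t{\bf E}^\ve$ in the energy identity. Since $\partial_t\mathcal K\tilde b_3(t,x)=\kappa(0)\tilde b_3(t,x)+\int_0^t\kappa'(t-\tau)\tilde b_3(\tau,x)\,d\tau$, the vanishing of $\kappa$ at the origin removes the pointwise-in-time term and leaves only the mollified one, which is what makes $\partial_t{\bf E}^\ve(\cdot,\mathcal K b^\ve_{e,3})$ bounded and allows the coercivity estimate $\big(2\gamma\,\widetilde{\bf E}_1 - \partial_t\widetilde{\bf E}_1\big)A\cdot A\geq 0$ used in the energy argument (compare the derivation in the proof of Lemma~\ref{lem_strong_two-scale}). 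With that correction, your account of where the kernel enters is consistent with the paper. The rest of the outline (Korn with $\omega$-uniform constant via Assumption~\ref{assum1}, positivity and $L^\infty$-bounds for $b^\ve_e,c^\ve$ via truncation plus the quasi-positivity and growth conditions in {\bf A5}, short-time contraction and continuation) is a faithful, if more detailed, rendering of the cited argument.
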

\begin{proof}[Proof Sketch.]
For each realisation $\omega$ the proof of the existence and uniqueness results   follows the same steps as the proof of Theorem 7 in \cite{Andrey_Mariya}.
\end{proof}

\begin{lemma}
Under Assumptions~\ref{assumptions1}  solutions of microscopic problem  \eqref{eq_codif}--\eqref{exbou_co} satisfy \textit{a priori } estimates for elastic displacement $u^\ve_e$, pressure  $p_e^\ve$, and fluid flow velocity  $\partial_t u^\ve_f$
\begin{equation}\label{estim_u_p_u}
\begin{aligned}
&\| u_e^\ve\|_{L^\infty(0,T; H^1(G_e^\ve))} + \|\partial_t u_e^\ve\|_{L^2(0,T; H^1(G_e^\ve))} + \| \partial^2_t u_e^\ve \|_{L^2(G_{e,T}^\ve)} \leq C , \\
&\| p_e^\ve\|_{L^\infty(0,T; H^1(G_e^\ve))}  + \| \partial_t p_e^\ve \|_{L^2(G_{e,T}^\ve)} \leq C, \\
& \| \partial_t u_f^\ve\|_{L^\infty(0,T; L^2(G_f^\ve))} + \| \partial^2_t u_f^\ve\|_{L^2(G_{f,T}^\ve)}
 + \ve \|\nabla \partial_t u_f^\ve\|_{L^2(G_{f,T}^\ve)} + \| p_f^\ve \|_{L^2(G_{f,T}^\ve)} \leq C,
\end{aligned}
\end{equation}
and for the concentration of calcium $c^\ve_e$ and $c_f^\ve$ and densities  of pectins and calcium-pectin cross-links $b_e^\ve$ we obtain
\begin{equation}\label{estim_b_c}
\begin{aligned}
&\| b_e^\ve\|_{L^2(0,T; H^1(G_e^\ve))} + \|b_e^\ve\|_{L^\infty(0,T; L^\infty(G_{e}^\ve))} + \ve^{1/2} \| b_e^\ve\|_{L^2(\Gamma_T^\ve)}\leq C , \\
&\| c_j^\ve\|_{L^2(0, T; H^1(G_j^\ve))}  + \| c_j^\ve\|_{L^\infty(0, T; L^4(G_j^\ve))} \leq C, \; \; & j = e,f,
\end{aligned}
\end{equation}
and
\begin{equation}\label{estim_time_h}
\begin{aligned}
& \| \theta_h b_e^\ve -  b^\ve_e\|_{L^2((0, \tilde T) \times G_e^\ve)} +  \| \theta_h c_j^\ve -  c^\ve_j\|_{L^2((0, \tilde T) \times G_j^\ve)}  \leq C h^{1/4}, \; \quad j=e,f,
\end{aligned}
\end{equation}
for $\tilde T \in (0, T-h]$ and for $\mathcal P$-a.a.\ $\omega \in \Omega$,  where   the constant $C$ is independent of $\ve$
and $\theta_h v(t,x) = v(t+h, x)$ for $(t,x) \in (0, T-h]\times G_j^\ve$, with $j=e,f$.
\end{lemma}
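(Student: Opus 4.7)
The plan is to follow the template of Theorem~8 in \cite{Andrey_Mariya} for the periodic case, relying on Assumption~\ref{assum1} to ensure that all geometric constants appearing in the estimates are uniform in both $\ve$ and in $\mathcal P$-a.a.\ realisations $\omega\in\Omega$. Concretely, the uniform Lipschitz constants, uniform lower and upper bounds on cell diameters, and uniform bounds on the thickness of $G_e(\omega)$ yield $\ve$- and $\omega$-independent constants for (i)~Korn's inequality on $G_e^\ve$, (ii)~the $\ve$-scaled trace inequality $\ve\|v\|_{L^2(\Gamma^\ve)}^2 \leq C(\|v\|_{L^2(G_e^\ve)}^2 + \ve^2\|\nabla v\|_{L^2(G_e^\ve)}^2)$, and (iii)~an extension operator $H^1(G_e^\ve)\to H^1(G)$. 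The first step is to verify these uniform inequalities by a covering argument, which reduces matters to a single deterministic Lipschitz-domain estimate.

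For the mechanical bounds \eqref{estim_u_p_u} I would test the weak formulation \eqref{weak_u_ef} with the admissible triple $\phi=\partial_t u_e^\ve$, $\psi=p_e^\ve$, $\eta=\partial_t u_f^\ve$, which is admissible by the tangential matching on $\Gamma^\ve$ and $\mathrm{div}\,\partial_t u_f^\ve=0$ in $G_f^\ve$. The pairing $\langle \nabla p_e^\ve,\partial_t u_e^\ve\rangle_{G_{e,T}^\ve}$ from the elasticity equation cancels $-\langle \partial_t u_e^\ve,\nabla p_e^\ve\rangle_{G_{e,T}^\ve}$ from the pressure equation, and the two interface contributions $\pm\langle p_e^\ve,\partial_t u_f^\ve\cdot n\rangle_{\Gamma^\ve_T}$ cancel pairwise. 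Combining the ellipticity in \textbf{A2}--\textbf{A3} with Korn's inequality and a Gr\"onwall argument then yields the first two lines, noting that the time-convolution structure of ${\bf E}^\ve$ with $\kappa(0)=0$ controls $\partial_t{\bf E}^\ve$ by lower-order quantities and keeps the energy identity closed. Differentiating the system formally in time and re-testing with $\phi=\partial_t^2 u_e^\ve$, $\psi=\partial_t p_e^\ve$, $\eta=\partial_t^2 u_f^\ve$ gives the remaining norms, consuming precisely the regularity $F_u\in H^2(0,T;L^2)$, $F_p\in H^1(0,T;L^2)$ postulated in \textbf{A6}. The $L^2$ bound on $p_f^\ve$ will be recovered at the end by inverting the divergence on $G_f^\ve$ via a Bogovskii operator whose operator norm is again uniform in $\ve$ and $\omega$ by the geometric assumptions.

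For the chemistry bounds \eqref{estim_b_c} I would test \eqref{cd_one} with $b_e^\ve$ and \eqref{cd_two} with $c^\ve$, handle the surface integral $\ve\langle R(b_e^\ve),b_e^\ve\rangle_{\Gamma^\ve_T}$ by the $\ve$-scaled trace inequality above, and use the boundedness of $\mathcal G$ from \textbf{A4} to absorb the convective term in the $c_f^\ve$ equation. The sign structure built into \textbf{A5} preserves componentwise non-negativity of $b_e^\ve$ and $c_j^\ve$. To upgrade to $L^\infty(L^\infty)$ for $b_e^\ve$ and to $L^\infty(L^4)$ for $c_j^\ve$ I would run an Alikakos--Moser iteration, testing with $(b_{e,j}^\ve)^{p-1}$ and $(c_j^\ve)^3$ respectively, and use the $L^2(H^1)$ bound on $u_e^\ve$ from the previous step to control the quadratic term $|r||A|$ in $g_b$. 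Finally, the time-shift estimates \eqref{estim_time_h} follow by testing the equations for $\theta_h b_e^\ve - b_e^\ve$ (and analogously for $c^\ve$) against themselves on the strip $(0,\tilde T)$; interpolation between the $L^2(H^1)$ bound and the $L^2(H^{-1})$ bound on the time derivative produces the (non-sharp) rate $h^{1/4}$.

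The main obstacle is the initial step of establishing the $\ve$- and $\omega$-uniform geometric inequalities in the random setting, since one cannot reduce to a single reference cell as in the periodic case. A secondary obstacle is that the reaction term $g_b$ has quadratic growth $|r||A|$ in the strain, so the $H^1$ estimate for $b_e^\ve$ cannot be closed without an a priori $L^\infty$ bound; the argument must therefore proceed in three stages (preliminary $L^2$ energy bound, Moser bootstrap to $L^\infty$, re-insertion into the $H^1$ estimate) to obtain \eqref{estim_b_c}.
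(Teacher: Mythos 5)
Your proposal is correct and takes essentially the same approach as the paper, which simply invokes Lemma 6 of \cite{Andrey_Mariya} realisation-wise and relies on Assumption~\ref{assum1} to guarantee that Korn, trace and extension constants are uniform in $\omega$ and $\ve$. Your reconstruction of the energy identity, the time-differentiated estimate for $\partial_t^2 u_e^\ve$ and $\partial_t p_e^\ve$, the Bogovskii inversion for $p_f^\ve$, and the three-stage ($L^2$, Moser, $H^1$) treatment of $b_e^\ve$ is a faithful fleshing-out of exactly what that reference proves.
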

\begin{proof}
For  $\mathcal P$-a.a.\  realisations $\omega\in \Omega$  the proof of the a priori estimates follows the same lines as in  \cite[Lemma 6]{Andrey_Mariya}.
\end{proof}

We shall denote  $c^\ve (t,x)=  c_e^\ve(t,x) \, \chi\big._{G_{e}^\ve} +
c_f^\ve(t,x) \chi\big._{G_{f}^\ve}.$

Using the assumptions on the random microscopic structure of $G^\ve_e$ and $G_f^\ve$ we obtain the following extension results for functions defined on $G^\ve_e$ and on a subdomain  $\widetilde G_{ef}^\ve \subset G$, which will be specified below.
\begin{lemma}\label{extension}
\begin{itemize}
\item[(i)] There exist extensions $\overline b_e^\ve$ and $\overline c_e^\ve $ of  $b_e^\ve$ and $c_e^\ve$, respectively,  from $L^2(0,T; H^1(G_e^\ve))$  to $L^2(0, T; H^1(G))$ such that
\begin{equation}\label{estim_ext_1}
\| \overline b_e^\ve \|_{L^2(G_T)} \leq  C \|b_e^\ve \|_{L^2(G^\ve_{e,T})}, \quad \| \nabla \overline b_e^\ve \|_{L^2(G_T)} \leq  C \|\nabla b_e^\ve \|_{L^2(G^\ve_{e,T})},
\end{equation}
\begin{equation}\label{estim_ext_11}
\| \overline c_e^\ve \|_{L^2(G_T)} \leq  C \|c_e^\ve \|_{L^2(G^\ve_{e,T})}, \quad \| \nabla \overline c_e^\ve \|_{L^2(G_T)} \leq  C \|\nabla c_e^\ve \|_{L^2(G^\ve_{e,T})}.
\end{equation}
\item[(ii)] There exists an extension $\overline c^\ve $ of  $c^\ve$ from  $L^2(0,T; H^1(\widetilde G_{ef}^\ve))$  to $L^2(0,T; H^1(G))$ such that
\begin{equation}\label{estim_ext_2}
\begin{aligned}
& \| \overline c^\ve \|_{L^2(G_T)} \leq  C \left(\|c_e^\ve \|_{L^2(G^\ve_{e,T}\cap\widetilde  G_{ef, T}^\ve)} +  \|c_f^\ve \|_{L^2(G^\ve_{f,T}\cap\widetilde  G_{ef, T}^\ve)}\right) , \quad \\
& \| \nabla \overline c^\ve \|_{L^2(G_T)} \leq  C \left(\|\nabla c_e^\ve \|_{L^2(G^\ve_{e,T}\cap\widetilde  G_{ef, T}^\ve)}+ \|\nabla c^\ve_f \|_{L^2(G^\ve_{f,T}\cap\widetilde  G_{ef, T}^\ve)}\right).
\end{aligned}
\end{equation}
Here   $\widetilde G^\ve_{ef}=G \setminus \widetilde G^\ve$, with  $\widetilde G^\ve= \widetilde \Gamma^\ve_{\ve\sigma}(\omega) \cap \widetilde G^\ve_{e}$, where  $\widetilde \Gamma^\ve_{\ve\sigma}(\omega)$ is a $\ve\sigma$-neighbourhood of $\widetilde \Gamma^\ve$   for  $\mathcal P$-a.a.\  realisations $\omega \in \Omega$ and  $0<\sigma< d_{\rm dim}/4$, with  $d_{\rm min}$ being  the minimal distance between connected components of $G_f(\omega)$.
\end{itemize}
\end{lemma}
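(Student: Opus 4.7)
The plan is to construct the extensions locally (cell by cell for part (i), tubular piece by tubular piece for part (ii)) and then glue; Assumption~\ref{assum1} will supply the uniformity that turns the local bounds into $\ve$-independent global bounds.

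For part (i), I fix a realisation $\omega$ and, for each connected component $\mathcal Q^\ve_j$ of $G^\ve_f$ meeting $G$, select a slightly enlarged neighbourhood $\mathcal U^\ve_j\supset\overline{\mathcal Q^\ve_j}$ at distance of order $\ve$ from $\mathcal Q^\ve_j$. By Assumption~\ref{assum1}.2 the $\mathcal U^\ve_j$ can be taken pairwise disjoint. After rescaling by $1/\ve$, the collection of unit-scale annuli $(\mathcal U_j,\mathcal Q_j)(\omega)$ has uniformly bounded diameter by Assumption~\ref{assum1}.3 and uniformly bounded Lipschitz constant of $\partial\mathcal Q_j$ by Assumption~\ref{assum1}.1. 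Consequently a Calder\'on--Stein-type extension operator $E_j:H^1(\mathcal U_j\setminus\mathcal Q_j)\to H^1(\mathcal U_j)$ exists with operator norm controlled by a single constant independent of $j$ and $\omega$; this can be proved either by a compactness argument on the family of admissible local geometries (precompact in the Hausdorff topology among Lipschitz domains with a given Lipschitz constant) or explicitly by a finite atlas of local graph charts of $\partial\mathcal Q_j$ followed by reflection. Rescaling $E_j$ back to size $\ve$, the $L^2$ and $H^1$ scalings balance so that the rescaled operator norm remains $\ve$-independent. Setting $\overline b^\ve_e = b^\ve_e$ on $G^\ve_e$ and $\overline b^\ve_e = E^\ve_j(b^\ve_e|_{\mathcal U^\ve_j\setminus\mathcal Q^\ve_j})$ on each $\mathcal Q^\ve_j$, applied pointwise in $t$, yields the extension satisfying \eqref{estim_ext_1}, and the same construction applied to $c^\ve_e$ gives \eqref{estim_ext_11}.

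For part (ii), the set to fill in is the thin strip $\widetilde G^\ve = \widetilde\Gamma^\ve_{\ve\sigma}\cap G^\ve_e$. Because $\sigma<d_{\min}/4$, the strip sits inside $G^\ve_e$ and splits into disjoint pieces, one per cell. By Assumption~\ref{assum1}.4 each such piece is a tubular $\ve\sigma$-neighbourhood of a uniformly Lipschitz open surface inside $\partial\mathcal Q^\ve_j$. On each piece I would straighten $\widetilde\Gamma^\ve$ by bi-Lipschitz local coordinates and extend $c^\ve$ across it by reflection from the two sides (the $G^\ve_e$ side providing $c^\ve_e$, the $G^\ve_f$ side providing $c^\ve_f$), combined with a cutoff supported in a slightly larger tubular strip so that the result agrees with $c^\ve$ on $\widetilde G^\ve_{ef}$. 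The reflection yields an operator bound that scales correctly in $\ve$ by the same argument as in (i), and summation over disjoint pieces gives \eqref{estim_ext_2}.

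The main obstacle is the uniformity of the operator norms rather than the extension construction itself: the shapes $\mathcal Q_j(\omega)$ vary with both $j$ and $\omega$, so one has to rule out that a sequence of admissible geometries could make the extension operators degenerate. This is precisely why the three geometric conditions in Assumption~\ref{assum1} are imposed; together they force the rescaled local geometries to form a precompact family of Lipschitz domains with a single Lipschitz constant, making all extension norms uniformly bounded. A secondary technical point is that cells truncated by $\partial G$ must be handled separately, for example by performing the extension only on cells contained in a slightly smaller subdomain and using the $C^{1,\alpha}$ regularity of $\partial G$ in a boundary layer; this does not affect the scaling or the estimates.
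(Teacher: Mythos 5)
Your overall strategy — construct local extension operators cell by cell (or strip by strip), exploit Assumption~\ref{assum1} for uniformity across realisations, and rescale — is exactly the mechanism behind the paper's one-line appeal to \cite{ACDP}, so you are on the same route. However, there is a genuine gap in the rescaling step. The lemma demands the \emph{separate} gradient bound $\|\nabla \overline b_e^\ve\|_{L^2}\le C\|\nabla b_e^\ve\|_{L^2}$, not merely $\|\overline b_e^\ve\|_{H^1}\le C\|b_e^\ve\|_{H^1}$. A Calder\'on--Stein operator $E_j$ on the unit-scale annulus $\mathcal U_j\setminus\mathcal Q_j$ gives only $\|\nabla E_j v\|_{L^2}\le C\|v\|_{H^1}$; since under the change of variables $x\mapsto x/\ve$ the $L^2$-norm scales as $\ve^{3/2}$ while the gradient $L^2$-norm scales as $\ve^{1/2}$, the two terms in the $H^1$-norm do \emph{not} balance, and the rescaled operator satisfies only $\|\nabla E_j^\ve u\|_{L^2}^2\le C\big(\ve^{-2}\|u\|_{L^2}^2+\|\nabla u\|_{L^2}^2\big)$. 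The refinement you are missing is precisely what \cite{ACDP} supplies: subtract the local average $\bar v$ of $v$ over the connected piece of $G_e^\ve$ surrounding each $\mathcal Q^\ve_j$, extend $v-\bar v$ (whose $H^1$-norm is controlled by $\|\nabla v\|_{L^2}$ alone via Poincar\'e on the unit-scale annulus), and add $\bar v$ back. Only then is the unit-scale bound a pure gradient bound, and only a pure gradient bound is scale-invariant.

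The same omission is more severe in part~(ii), where $c^\ve_e$ and $c^\ve_f$ generally have a nonzero jump $[c^\ve]$ across $\widetilde\Gamma^\ve$. Your cutoff produces the term $\nabla\chi\big((\mathrm{refl}\ c^\ve_f)-c^\ve_e\big)$, whose $L^2(\widetilde G^\ve)$-norm behaves like $(\ve\sigma)^{-1/2}\|[c^\ve]\|_{L^2(\widetilde\Gamma^\ve)}$; with only the trace inequality $\|[c^\ve]\|_{L^2(\widetilde\Gamma^\ve)}\lesssim\ve^{-1/2}\|c^\ve\|_{H^1}$ this diverges as $\ve\to0$, so it is not ``the same argument as in (i).'' The jump is tamed only after subtracting a \emph{common} local average on both sides, exploiting that $\widetilde G^\ve_{ef}$ is connected through $\Gamma^\ve\setminus\widetilde\Gamma^\ve$ so that a single Poincar\'e constant links $c^\ve_f$ near $\widetilde\Gamma^\ve$ and $c^\ve_e$ on the far face; this yields $\|[c^\ve]\|_{L^2(\widetilde\Gamma^\ve)}^2\lesssim\ve\|\nabla c^\ve\|_{L^2}^2$ and makes the cutoff term $\ve$-uniform. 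Without this connectedness-plus-averaging step the reflection-and-cutoff construction does not close.
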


\begin{proof}
The uniform boundedness of the diameter of  cell walls and cell  interiors, independent on realisations $\omega \in \Omega$,  implies the existence of the corresponding extension operators, see \cite{ACDP}
\end{proof}
Extensions for $u^\varepsilon_e$ and $p^\varepsilon_e$ are defined in the similar way as for 
$b^\varepsilon_e$.

\begin{lemma}\label{cor_estim_extend}
For extensions of $b_e^\ve$, $c^\ve_e$,  $u_e^\ve$, $p_e^\ve$ from $G^\ve_{e,T}$  to $G_T$ and $c^\ve$  from $\widetilde G^\ve_{ef,T}$  to $G_T$  (denoted again by  $b_e^\ve$, $c^\ve_e$,   $u_e^\ve$,   $p_e^\ve$, and $c^\ve$)  we have the following estimates
\begin{equation}\label{estim_extend}
\begin{aligned}
& \| u_e^\ve\|_{H^1(0,T; H^1(G))} + \| \partial^2_t u_e^\ve \|_{L^2(G_T)}+\| p_e^\ve\|_{L^\infty(0,T; H^1(G))}  + \| \partial_t p_e^\ve \|_{L^2(G_T)} \leq C, \\
 &\| b_e^\ve\|_{L^2(0,T; H^1(G))} + \| c^\ve_e\|_{L^2(0, T; H^1(G))}  + \| c^\ve\|_{L^2(0, T; H^1(G))}   \leq C, \\
& \| \theta_h b_e^\ve -  b^\ve_e\|_{L^2((0, \tilde T) \times G)} +  \| \theta_h c^\ve_e -  c^\ve_e\|_{L^2((0, \tilde T) \times G)}+  \| \theta_h c^\ve -  c^\ve\|_{L^2((0, \tilde T) \times G)}  \leq C h^{1/4},
\end{aligned}
\end{equation}
where the constant $C$ is independent of $\ve$.
An extension of  $\partial_t u_f^\ve$  from $G_{f,T}^\ve$ to $G_T$, constructed below and denoted again by $\partial_t u_f^\ve$ satisfies the following estimates
\begin{equation}\label{estim_extend_2}
\begin{aligned}
& \| \partial_t u_f^\ve\|_{L^\infty(0,T; L^2(G))} + \| \partial^2_t u_f^\ve\|_{L^2(G_T)}
 + \ve \|\nabla \partial_t u_f^\ve\|_{L^2(G_T)}  \leq C,
 \end{aligned}
\end{equation}
where the constant $C$ is independent of $\ve$.
Also we have that
\begin{equation}\label{estim_pf}
\|\widetilde p^\ve\|_{L^2(G_T)} + \|p^\ve_f\|_{L^2((0,T)\times G^\ve_f)}  \leq C, \quad \text{ where }  \;  \widetilde p^\ve =\begin{cases}  p_f^\ve & \text{ in } \; \;  (0,T)\times  G_f^\ve, \\
p_e^\ve & \text{ in } \; \;  (0,T)\times( G \setminus  G_f^\ve),
\end{cases}
\end{equation}
and the constant $C$ does not depend on $\ve$.
\end{lemma}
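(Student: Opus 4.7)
The plan is to split the statement into three groups of bounds and to treat each one using the a priori estimates \eqref{estim_u_p_u}--\eqref{estim_time_h} together with the extension result of Lemma~\ref{extension}.

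For $b_e^\ve$, $c_e^\ve$, $c^\ve$, $u_e^\ve$, and $p_e^\ve$ the estimates in \eqref{estim_extend} follow directly from the boundedness of the linear extension operators of Lemma~\ref{extension}, applied slicewise in time. The operator norms are uniform in $\ve$ and in $\omega$ by Assumption~\ref{assum1}. Since these extensions act only in the spatial variable, they commute with $\partial_t$, so $\partial_t u_e^\ve$, $\partial_t^2 u_e^\ve$, and $\partial_t p_e^\ve$ inherit the same bounds, and the time-shift estimate \eqref{estim_time_h} transfers automatically from $G_{e,T}^\ve$ (resp.\ $\widetilde G_{ef,T}^\ve$) to $G_T$.

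The main obstacle is the construction of an extension of $\partial_t u_f^\ve$ that preserves the $\ve$-weighted gradient bound, because $G_f^\ve$ is a disjoint union of Lipschitz cells of diameter $O(\ve)$ rather than a connected set. I would use a scaling argument: each connected component $\mathcal{Q}_j^\ve$ is the $\ve$-rescaling of a Lipschitz domain of diameter $O(1)$ with a uniform Lipschitz constant (Assumption~\ref{assum1}.1 and~3), and the cells are separated by distances of order $\ve$ (Assumption~\ref{assum1}.2). Passing to the rescaled variable $y = x/\ve$, one constructs a bounded extension operator $H^1(\mathcal{Q}_j) \to H^1(Y_j)$, where $Y_j \supset \mathcal{Q}_j$ is a surrounding Lipschitz neighbourhood of $O(1)$ diameter, with operator norm independent of $j$ and of the realisation. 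The key point is that the $\ve Y_j$ can be taken to cover $G$ with bounded overlap. Rescaling back and summing over cells gives the $L^\infty(0,T;L^2(G))$ bound without any $\ve$-loss, while the gradient is multiplied by $\ve^{-1}$ under the change of variables, which is precisely cancelled by the $\ve$ factor in \eqref{estim_extend_2}. The bound on $\partial_t^2 u_f^\ve$ follows in the same way because the spatial extension operator commutes with $\partial_t$.

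Finally, for \eqref{estim_pf} the bound on $p_f^\ve$ on $G_{f,T}^\ve$ is already contained in \eqref{estim_u_p_u}, and the bound on $\widetilde p^\ve$ is obtained from
\[
\|\widetilde p^\ve\|_{L^2(G_T)}^2 = \|p_f^\ve\|_{L^2(G_{f,T}^\ve)}^2 + \|p_e^\ve\|_{L^2(G_{e,T}^\ve)}^2,
\]
both of which are controlled uniformly in $\ve$, the second via the extension bound on $p_e^\ve$ established in the first paragraph. The normal-stress transmission condition $n\cdot (\ve^2 \mu \,\e(\partial_t u_f^\ve) - p_f^\ve I) n = -p_e^\ve$ on $\Gamma_T^\ve$ makes this piecewise gluing natural, although for the $L^2(G_T)$-bound itself only the two separate bounds are needed.
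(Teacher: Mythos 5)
Your treatment of the first group of bounds (for $b_e^\ve$, $c_e^\ve$, $c^\ve$, $u_e^\ve$, $p_e^\ve$) matches the paper: apply the linear spatial extension operators of Lemma~\ref{extension} slicewise in time, use commutation with $\partial_t$, and transfer the time-shift estimate by linearity. That part is fine.

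For the extension of $\partial_t u_f^\ve$ there is a genuine gap, in two respects. First, your ``covering with bounded overlap'' picture does not produce a single well-defined extension: if $\ve Y_j$ and $\ve Y_k$ overlap for $j\ne k$, the two local extensions of $u_f^\ve$ disagree there, and gluing them with a partition of unity destroys the property that the extension restricts to $u_f^\ve$ on $G_f^\ve$. The paper avoids this entirely by working on pairwise disjoint annuli $G^\sigma_{f,j}(\omega)\setminus G_{f,j}(\omega)$ with $\sigma=d_{\min}/4$ (Assumption~\ref{assum1}.2 makes these disjoint), imposing zero data on the outer boundary $\partial G^\sigma_{f,j}$, and then extending by zero; no gluing is required. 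Second, and more importantly, the paper does not use a generic $H^1$-extension: it solves, in each annulus, a divergence-constrained (Stokes/Bogovskii-type) problem
\[
\mathrm{div}_y\,\partial_t\widetilde u_f^j=0,\quad
\partial_t\widetilde u_f^j=\partial_t u_f^\ve(t,\ve y)\ \hbox{on } \Gamma_j(\omega),\quad
\partial_t\widetilde u_f^j=0\ \hbox{on }\partial G^\sigma_{f,j}(\omega),
\]
so that the resulting global extension satisfies $\mathrm{div}\,\partial_t\overline u_f^\ve=0$ in all of $G$. This divergence-free property is not stated in \eqref{estim_extend_2}, but it is indispensable a few steps later (in the proof of Theorem~\ref{main_1}) to identify the two-scale limit of $\mathrm{div}\,\partial_t u_f^\ve$ and conclude $\mathrm{div}_\omega\,\partial_t u_f=0$. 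A generic rescaled $H^1$-extension gives the $L^2$ and weighted-gradient bounds but loses this structure, so the argument does not feed correctly into the rest of the homogenization.

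On the pressure bound \eqref{estim_pf}: your orthogonal decomposition $\|\widetilde p^\ve\|^2_{L^2(G_T)}=\|p_f^\ve\|^2_{L^2(G^\ve_{f,T})}+\|p_e^\ve\|^2_{L^2((0,T)\times(G\setminus G_f^\ve))}$ is trivially correct, and if one accepts the bound on $\|p_f^\ve\|_{L^2(G_{f,T}^\ve)}$ quoted in \eqref{estim_u_p_u} as a black box, the conclusion follows. The paper, however, re-derives this bound inside the lemma via an inf-sup argument: testing the momentum equations with $\phi\in L^2(0,T;H^1(G))^3$, using the extension of $p_e^\ve$ and a trace estimate to control boundary terms, and then choosing $\phi$ with prescribed divergence. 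The reason is that the duality estimate \eqref{estim_pf_2} genuinely relies on the extension of $p_e^\ve$ that is constructed in this very lemma, so simply invoking \eqref{estim_u_p_u} risks concealing a dependence. Your shortcut is fine as a statement of fact, but it skips the argument the lemma is partly there to record.
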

\begin{proof}
  The estimates for $b_e^\ve$, $c^\ve_e$, $c^\ve$,  $u_e^\ve$, and $p_e^\ve$  follow directly from estimates \eqref{estim_u_p_u}--\eqref{estim_time_h}, Lemma~\ref{extension}, and  the  linearity of extension considered  in Lemma~\ref{extension}.

Using geometrical assumptions on $G_f(\omega)$, for $\mathcal P$-a.a.\  $\omega\in \Omega$,
we can extend $\partial_t u_f^\ve$  from $G_f^\ve$ to $G$ in the following way. For  each connected component $G_{f,j}(\omega)$  of $G_f(\omega)$, with $j \in \mathbb N$, we can consider a $\sigma$-neighbourhood $G^\sigma_{f,j}(\omega)$ of $G_{f,j}(\omega)$, where $\sigma = d_{\rm min}/4$ and $d_{\rm min}$ is the minimal distance between $G_{f,j}(\omega)$ for $j \in \mathbb N$.  Then  since $\partial_t u^\ve_f \in L^2(0,T; H^1(G^\ve_f))$, i.e.\ $\partial_t u_f^\ve \in L^2(0,T; H^{1/2}(\Gamma^\ve))$,   there exists  $\partial_t \widetilde u_f^j \in L^2(0,T; H^1(G^\sigma_{f, j}(\omega)\setminus G_{f,j}(\omega))$ satisfying the problem
\begin{equation}
\begin{aligned}
& {\rm div} _y\partial_t \widetilde u_f^j =  0 \quad && \text{ in } G^\sigma_{f, j}(\omega)\setminus G_{f,j}(\omega), \\
&  \partial_t \widetilde u_f^j =  \partial_t  u_f^\ve(t, \ve y) \quad&&  \text{ on } \Gamma_j(\omega), \\
& \partial_t \widetilde u_f^j =   0 \quad  && \text { on } \partial  G^\sigma_{f, j}(\omega)
\end{aligned}
\end{equation}
for $\mathcal P$-a.a.\ realisations $\omega \in \Omega$ and $j \in \mathbb N$, see e.g.\ \cite[Theorem 2.4, Lemma 2.4]{Temam}.
Each  $\partial_t \widetilde u_f^j $ we extend by zero to $G_e(\omega) \setminus G^\sigma_{f, j}(\omega)$.  Considering a scaling $y=x/\ve$ in $\partial_t\widetilde u_f^j$  and collecting all $\partial_t \widetilde u_f^j$ for $j \in \mathbb N$ we obtain an extension $\partial_t \overline u^\ve_f$ of $\partial_t u_f^\ve$ from $G_f^\ve$ to $G$ such that
$\partial_t \overline u^\ve_f  \in L^2(0,T; H^1(G))$ and
\begin{equation}\label{extend_uf}
\begin{aligned}
&{\rm div}   \partial_t \overline u_f^\ve  =0 \quad \text{ in } G, \\
& \| \partial_t u_f^\ve\|_{L^2(G_T)}+ \ve \|\nabla \partial_t u_f^\ve \|_{L^2(G_T)} \leq C,
 \end{aligned}
\end{equation}
where the constant $C$ is independent of    $\ve$.

 Similar to the periodic case to show the a priori estimates for $p_f^\ve$ we consider   the first and third equations  in \eqref{equa_cla}    and use the a priori estimates for $u_e^\ve$,  $p_e^\ve$, and $\partial_t u_f^\ve$  to obtain
\begin{equation}\label{estim_pf_2}
\begin{aligned}
\langle p_f^\ve, {\rm div} \, \phi \rangle_{G_{f,T}^\ve} +  \langle p_e^\ve, {\rm div }\, \phi   \rangle_{G_{e, T}^\ve} = \langle \ve^2 \mu \, \be(\partial_t u_f^\ve),  \be(\phi)  \rangle_{G_{f,T}^\ve} +  \langle \rho_f  \partial^2_t u_f^\ve, \phi\rangle_{G_{f, T}^\ve}
\\
 + \langle \rho_e  \partial^2_t u_e^\ve, \phi\rangle_{G_{e, T}^\ve}
+  \langle {\bf E}^\ve(b^\ve_{e,3}) \be(u_e^\ve), \be(\phi) \rangle_{G_{e, T}^\ve}+\langle p_e^\ve\,  n - F_u, \phi \rangle_{(\partial G)_T} \\
\leq C \|\phi\|_{L^2(0,T; H^1(G))^3},
\end{aligned}
\end{equation}
with  $\phi \in L^2(0,T; H^1(G))^3$.
Here we used  the extension of  $p_e^\ve$  from $G_e^\ve$ to $G$, see Lemma~\ref{extension}, and  the trace estimate
$\|p_e^\ve\|_{L^2((0,T)\times\partial G)}\leq C_1 \|p_e^\ve\|_{L^2(0,T; H^1(G))} \leq C_2 \|p_e^\ve\|_{L^2(0,T; H^1(G_e^\ve))}$.

For any $q \in L^2(G_T)$ there exists $\phi \in  L^2(0,T; H^1(G))^3$ satisfying
$${\rm div} \, \phi =q \quad \text{ in } \; G, \qquad  \phi \cdot n = \frac 1{|\partial G|}\int_G q(\cdot,x) dx  \quad \text{ on } \;  \partial G
$$ and $\| \phi\|_{L^2(0,T; H^1(G))^3}\leq C \|q\|_{L^2(G_T)}$.
Thus
 using \eqref{estim_pf}, the definition of the $L^2$-norm,  and the a priori estimates for  $p^\ve_e$ we obtain
$$
\|\widetilde p^\ve\|_{L^2(G_{T})} \leq C  \quad \text{ and } \quad \|p_f^\ve\|_{L^2((0,T)\times G_f^\ve)} \leq C,
$$
where  the constant $C$ is independent of $\ve$.
\end{proof}

\section{Convergence results}\label{convergence}
From a priori estimates derived in Lemma~\ref{cor_estim_extend} we obtain corresponding  strong and  stochastic two-scale convergences for a subsequence of solutions of microscopic problem \eqref{eq_codif}--\eqref{exbou_co}.  First we recall the definition of the stochastic two-scale convergence introduced in \cite{Zhikov_Piatnitski_2006}.

\begin{definition}\label{def_t_s}
 Let $G$ be a domain in $\mathbb R^3$,  $\T_x$ be an  ergodic dynamical system, and $\widetilde \omega$ be a ``typical realisation''.  Then, we say that a sequence $\{ v^\ve \} \subset L^2(0, T; L^2(G))$
converges  stochastically  two-scale to   $v\in L^2(G_T; L^2(\Omega, d \mathcal P))$ if
\begin{equation}\label{estim_t-s-def}
\limsup\limits_{\ve \to 0} \int_0^T\int_G |v^\ve(t, x)|^2 \,  d x \,  dt < \infty
\end{equation}
and
\begin{eqnarray}\label{stoch_two_scale}
&&\lim\limits_{\ve \to 0} \int_0^T\int_{G} v^\ve(t, x) \varphi(t,x) \psi(\T_{x/\ve}\widetilde \omega)\, dx dt \\ && \hspace{ 4 cm } =
\int_0^T\int_G\int_\Omega v(t, x, \omega) \varphi(t,x) \psi(\omega)  \, d \mathcal P(\omega)  dx  dt \nonumber
\end{eqnarray}
for all   $\varphi \in C^\infty_0([0, T)\times G)$ and $\psi \in L^2(\Omega)$.
\end{definition}

As a ``typical realisation'' we denote such  realisation $\omega\in\Omega$ that  Birkhoff's theorem is satisfied for $\T_x\omega$, i.e.
$$
\lim\limits_{\ell\to \infty} \frac 1 { \ell^3|A|} \int_{\ell A} g(\T_x \omega) \, dx  = \int_\Omega g(\omega) \, d\mathcal P(\omega)
$$
$\mathcal P$-a.s.\  for all bounded Borel sets $A$, $|A|>0$, and all $g(\omega) \in C(\Omega)$.
Let us note that realisations are typical $\mathcal P$-a.s., see e.g.\ \cite{Zhikov_Piatnitski_2006}.

Using  compactness  properties of  stochastic two-scale convergence, see  \cite{Zhikov_Piatnitski_2006},  we obtain the following result.
\begin{lemma}\label{convergence_u_p}
There exist functions $u_e \in H^1(0,T; H^1(G))\cap H^2(0,T; L^2(G))$, $p_e \in L^2(0,T; H^1(G))\cap H^1(0,T; L^2(G))$,
$U_e^1, \partial_t U_e^1 \in L^2(G_T; L^2_{\rm pot} (\Omega))^{3}$,  $P_e^1 \in L^2(G_T; L^2_{\rm pot} (\Omega))$,  and  $\partial_t u_f\,  \chi\big._{\Omega_f}$, $\nabla_\omega \partial_t u_f \,  \chi\big._{\Omega_f}$, $\partial_t^2 u_f  \, \chi\big._{\Omega_f}$,  $p_f \,  \chi\big._{\Omega_f}   \in L^2(G_T\times \Omega)$,  such that, up to a subsequence,
\begin{equation}\label{convergences_1}
\begin{aligned}
& u_e^\ve \rightarrow u_e && \text{ strongly in } H^1(0,T; L^2(G)), \\
& p_e^\ve \rightarrow p_e && \text{ strongly in } L^2((0,T)\times G), \\
&  \partial^2_t u_e^\ve  \rightharpoonup \partial^2_t u_e, \; \quad  \partial_t p_e^\ve  \rightharpoonup  \partial_t p_e&& \text{ stochastically two-scale} , \\
& \nabla u_e^\ve  \rightharpoonup\nabla u_e +U_e^1 && \text{ stochastically  two-scale} , \\
& \nabla p_e^\ve \rightharpoonup  \nabla p_e + P_e^1 && \text{ stochastically two-scale} ,
\end{aligned}
\end{equation}
and for fluid velocity and pressure we have
\begin{equation}\label{convergences_2}
\begin{aligned}
&\chi\big._{G^\ve_f}  \partial_t u_f^\ve  \rightharpoonup \chi\big._{\Omega_f} \partial_t u_f && \text{ stochastically two-scale} , \\
&\ve \chi\big._{G^\ve_f} \nabla \partial_t u_f^\ve  \rightharpoonup  \chi\big._{\Omega_f} \nabla_\omega \partial_t u_f && \text{ stochastically two-scale} , \\
& \chi\big._{G^\ve_f}  \, p_f^\ve  \rightharpoonup  \chi\big._{\Omega_f} \, p_f && \text{ stochastically two-scale}.
\end{aligned}
\end{equation}
\end{lemma}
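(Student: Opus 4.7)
The plan is to deduce all convergences from the uniform bounds of Lemma~\ref{cor_estim_extend} by combining standard weak compactness in Bochner spaces, the Aubin--Lions lemma (for strong convergences), and the stochastic two-scale compactness theorem of \cite{Zhikov_Piatnitski_2006} (for the two-scale statements and the identification of correctors). Throughout, all assertions are for a fixed typical realisation $\widetilde\omega\in\Omega$ and a suitable subsequence; the constants in \eqref{estim_extend}--\eqref{estim_pf} are independent of $\ve$, so the subsequences can be extracted by a diagonal argument.

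First I would obtain the strong convergences of $u_e^\ve$ and $p_e^\ve$. From \eqref{estim_extend} the sequence $\{u_e^\ve\}$ is bounded in $H^1(0,T;H^1(G))\cap H^2(0,T;L^2(G))$, so after passing to a subsequence it converges weakly in these spaces to some $u_e$. Since $H^1(G)\hookrightarrow L^2(G)$ compactly and $\partial_t u_e^\ve$ is bounded in $L^2(0,T;H^1(G))$ while $\partial_t^2 u_e^\ve$ is bounded in $L^2(G_T)$, the Aubin--Lions lemma applied at the level of $u_e^\ve$ and at the level of $\partial_t u_e^\ve$ yields $u_e^\ve\to u_e$ strongly in $H^1(0,T;L^2(G))$. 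The same argument, with the bound $\|p_e^\ve\|_{L^\infty(0,T;H^1(G))}+\|\partial_t p_e^\ve\|_{L^2(G_T)}\le C$, gives $p_e^\ve\to p_e$ strongly in $L^2(G_T)$.

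Next I would handle the stochastic two-scale convergences. The sequences $\partial_t^2 u_e^\ve$, $\partial_t p_e^\ve$, $\nabla u_e^\ve$, $\nabla p_e^\ve$ are all bounded in $L^2(G_T)$, so by the compactness of stochastic two-scale convergence they admit two-scale limits in $L^2(G_T\times\Omega)$ along a further subsequence. The two-scale limits of $\partial_t^2 u_e^\ve$ and $\partial_t p_e^\ve$ must be $\partial_t^2 u_e(t,x)$ and $\partial_t p_e(t,x)$ (independent of $\omega$), which is seen by testing against $\varphi(t,x)\psi(\T_{x/\ve}\widetilde\omega)$, using that $\partial_t^2 u_e^\ve\rightharpoonup\partial_t^2 u_e$ weakly in $L^2(G_T)$ together with Birkhoff's ergodic theorem applied to $\psi(\T_{x/\ve}\widetilde\omega)$. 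For the gradients, the standard two-scale compactness result for bounded sequences in $L^2(0,T;H^1(G))$ (see \cite{Zhikov_Piatnitski_2006}) gives the existence of $U_e^1\in L^2(G_T;L^2_{\rm pot}(\Omega))^3$ and $P_e^1\in L^2(G_T;L^2_{\rm pot}(\Omega))$ such that $\nabla u_e^\ve\rightharpoonup \nabla u_e+U_e^1$ and $\nabla p_e^\ve\rightharpoonup \nabla p_e+P_e^1$ stochastically two-scale; the fact that the correctors lie in $L^2_{\rm pot}(\Omega)$ comes from testing against divergence-free (in the $\omega$-variable) vector fields of the form $\ve\,\varphi(t,x)\,\Phi(\T_{x/\ve}\widetilde\omega)$ with $\Phi\in L^2_{\rm sol}(\Omega)$ and integrating by parts. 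The regularity $\partial_t U_e^1\in L^2(G_T;L^2_{\rm pot}(\Omega))^3$ is obtained by applying the same argument to $\nabla\partial_t u_e^\ve$, whose $L^2$-norm is controlled by \eqref{estim_extend}.

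Finally, for the fluid domain, I would use the extension $\partial_t u_f^\ve$ from \eqref{extend_uf}, for which $\|\partial_t u_f^\ve\|_{L^\infty(0,T;L^2(G))}+\|\partial_t^2 u_f^\ve\|_{L^2(G_T)}+\ve\|\nabla\partial_t u_f^\ve\|_{L^2(G_T)}\le C$. Multiplying by the characteristic function $\chi_{G_f^\ve}(x)=\chi_{\Omega_f}(\T_{x/\ve}\widetilde\omega)$ and applying stochastic two-scale compactness, the products $\chi_{G_f^\ve}\partial_t u_f^\ve$, $\chi_{G_f^\ve}\partial_t^2 u_f^\ve$, and $\chi_{G_f^\ve}p_f^\ve$ (the latter extended by $p_e^\ve$ outside $G_f^\ve$, cf.~\eqref{estim_pf}) converge stochastically two-scale to limits supported on $\Omega_f$, which defines $\partial_t u_f$, $\partial_t^2 u_f$, and $p_f$. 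The identification $\ve\chi_{G_f^\ve}\nabla\partial_t u_f^\ve\rightharpoonup \chi_{\Omega_f}\nabla_\omega\partial_t u_f$ is the delicate point: testing against $\varphi(t,x)\Psi(\T_{x/\ve}\widetilde\omega)$ with $\Psi\in C^1_\T(\Omega)$ and integrating by parts in $x$, the term involving $\nabla_x\varphi$ carries a factor $\ve$ and vanishes, while the term $\ve^{-1}\nabla_\omega\Psi$ reproduces the intrinsic gradient on $\Omega$; this fixes the limit and shows the two-scale limit of $\ve\nabla\partial_t u_f^\ve$ coincides with $\nabla_\omega$ of the two-scale limit of $\partial_t u_f^\ve$. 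The main obstacle I anticipate is the careful construction of admissible test functions adapted to the random geometry (in particular across the random interface $\Gamma(\omega)$) and verifying that $U_e^1$ and $P_e^1$ land in $L^2_{\rm pot}(\Omega)$ rather than in the larger space $L^2_{\rm pot,\Gamma}(\Omega)$; this follows from the continuity of the extensions provided by Lemma~\ref{extension}.
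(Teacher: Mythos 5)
Your proposal is correct and follows essentially the same route as the paper's own (rather terse) proof: uniform bounds from Lemma~\ref{cor_estim_extend}, compact embedding / Aubin--Lions for the strong convergences, and the stochastic two-scale compactness theorem of \cite{Zhikov_Piatnitski_2006} for the two-scale statements and corrector identification. Your additional remarks — testing against solenoidal oscillating fields to confirm $U_e^1,P_e^1\in L^2_{\rm pot}(\Omega)$ rather than $L^2_{\rm pot,\Gamma}(\Omega)$, and using $\widetilde p^\ve$ from \eqref{estim_pf} for the pressure — fill in exactly the details the paper leaves implicit.
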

\begin{proof}
The estimates \eqref{estim_extend},    the compactness of the  embedding of $H^1(0,T; L^2(G))\cap L^2(0,T; H^1(G))$ in $L^2(G_T)$,  and the compactness theorem for stochastic  two-scale convergence, see e.g.\ \cite{Zhikov_Piatnitski_2006}, yield the convergence results in \eqref{convergences_1}.

For the extension of $u_f^\ve$ from $G_f^\ve$ to $G$ we have the stochastic two-scale convergence of $\partial_t u_f^\ve \rightharpoonup \partial_t u_f$ and $\ve\nabla \partial_t u_f^\ve  \rightharpoonup \nabla_\omega \partial_t u_f$, with  $\partial_t u_f,  \nabla_\omega \partial_t u_f \in L^2(G_T\times \Omega)$, respectively.   Additionally we have that  $U_e^1 \, \chi\big._{\Omega_e} $, $ P_e^1\,  \chi\big._{\Omega_e}$,  $ \partial_t u_f \, \chi\big._{\Omega_f}$,  and $\nabla_\omega \partial_t u_f\,  \chi\big._{\Omega_f} $ do not depend on the extension of  $u_e^\ve$, $p_e^\ve$ from $G_e^\ve$ to $G$ and of $\partial_t u_f^\ve$ from $G_f^\ve$ to $G$.
 The estimate and  definition of $\widetilde p^\ve$  in \eqref{estim_pf} and  \eqref{estim_pf_2} ensure the stochastic two-scale convergence of   $\chi\big._{G^\ve_f} p_f^\ve$.
\end{proof}

In the following lemma, we shall use the same notation for $b_e^\ve$, $c^\ve_e$ and their extensions from $G_e^\ve$ to $G$, whereas the extension for $c^\ve$ from $\widetilde G_{ef}^\ve$ to $G$ will be denoted by $\overline c^\ve$.
\begin{lemma}\label{convergence_b_c}
There exist functions $b_e, c  \in L^2(0,T; H^1(G))$,  $b_e \in L^\infty(0,T; L^\infty(G))$,  $c\in L^\infty(0,T; L^4(G))$, and correctors
$B_e^1 \in L^2(G_T; L^2_{\rm pot}(\Omega))$  and  $C^1 \in L^2(G_T; L^2_{{\rm {pot}},  \Gamma} (\Omega))$,
 such that, up to a subsequence,
\begin{equation}\label{convergences_11}
\begin{aligned}
& b_e^\ve \rightarrow b_e, \;  \; \; c^\ve \to c && \text{ strongly in } L^2(G_T),  \\
&\nabla b_e^\ve \rightharpoonup \nabla b_e + B_e^1 && \text{ stochastically two-scale},  \\
& \nabla c^\ve \rightharpoonup \nabla c + C^1 && \text{ stochastically two-scale}, \; \; \; \; \text{ as } \; \ve \to 0.
\end{aligned}
\end{equation}
\end{lemma}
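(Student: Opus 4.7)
The plan is to establish the three assertions in turn: strong $L^2(G_T)$ convergence of $b_e^\ve$ and $c^\ve$, identification of the two-scale limits of their gradients together with the appropriate corrector spaces, and the stated $L^\infty$ integrability. Throughout I work with the extensions supplied by Lemma~\ref{extension} and the uniform estimates of Lemma~\ref{cor_estim_extend}.

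\textbf{Step 1: strong convergence.} The bound $\|b_e^\ve\|_{L^2(0,T;H^1(G))}+\|c^\ve\|_{L^2(0,T;H^1(G))}\le C$ provides equicontinuity with respect to spatial translations, while the time-shift estimate $\|\theta_h v-v\|_{L^2((0,T-h)\times G)}\le C h^{1/4}$ provides equicontinuity in time. A Kolmogorov--Riesz criterion then yields subsequences converging strongly in $L^2(G_T)$ to limits $b_e$ and $c$; by weak lower semicontinuity these limits lie in $L^2(0,T;H^1(G))$. The uniform $L^\infty(0,T;L^\infty(G_e^\ve))$ bound for $b_e^\ve$ and the $L^\infty(0,T;L^4(G_j^\ve))$ bound for $c_j^\ve$ of \eqref{estim_b_c} pass through the extension procedure and, by Fatou, give $b_e\in L^\infty(0,T;L^\infty(G))$ and $c\in L^\infty(0,T;L^4(G))$.

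\textbf{Step 2: two-scale limit of $\nabla b_e^\ve$.} Since the extended $b_e^\ve$ is uniformly bounded in $L^2(0,T;H^1(G))$, the stochastic two-scale compactness theorem of \cite{Zhikov_Piatnitski_2006} produces, along a further subsequence, a corrector $B_e^1\in L^2(G_T;L^2_{\rm pot}(\Omega))$ with $\nabla b_e^\ve\rightharpoonup \nabla b_e+B_e^1$ stochastically two-scale. The potential character of $B_e^1$ is standard: testing $\nabla b_e^\ve$ against random solenoidal fields $\psi(\T_{x/\ve}\omega)\varphi(t,x)$ with $\psi\in L^2_{\rm sol}(\Omega)$ and passing to the limit shows $B_e^1$ is orthogonal to $L^2_{\rm sol}(\Omega)$ in the $\omega$-variable, hence belongs to $L^2_{\rm pot}(\Omega)$ for a.e.\ $(t,x)$.

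\textbf{Step 3: two-scale limit of $\nabla c^\ve$ and its corrector space.} Here lies the main difficulty. The function $c^\ve=c_e^\ve\chi_{G_e^\ve}+c_f^\ve\chi_{G_f^\ve}$ is continuous across $\Gamma^\ve\setminus\widetilde\Gamma^\ve$ but is allowed to jump across the impermeable part $\widetilde\Gamma^\ve$, and the extension $\overline c^\ve$ of Lemma~\ref{extension}(ii) is only defined away from a thin $\ve\sigma$-neighbourhood of $\widetilde\Gamma^\ve$. To identify the corrector I would test the weak two-scale limit of $\nabla\overline c^\ve$ against fields $\psi(\T_{x/\ve}\omega)\varphi(t,x)$ with $\psi\in C_{\T,\Gamma}(\Omega)^3$ whose realisations are solenoidal on $\mathbb R^3\setminus\widetilde\Gamma(\omega)$ and whose normal trace vanishes on $\widetilde\Gamma(\omega)$; this is precisely the class dual to $L^2_{{\rm pot},\Gamma}(\Omega)$. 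Integration by parts produces no boundary contribution on $\widetilde\Gamma^\ve$, and the corresponding orthogonality relation in $L^2(G_T\times\Omega)$ forces the corrector $C^1$ to lie in the enlarged space $L^2_{{\rm pot},\Gamma}(\Omega)$ rather than the smaller $L^2_{\rm pot}(\Omega)$. Combined with the strong convergence $c^\ve\to c$ from Step~1 (which pins down the non-corrector part as $\nabla c$), this gives $\nabla c^\ve\rightharpoonup\nabla c+C^1$ two-scale.

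\textbf{Main obstacle.} The delicate point is Step~3: justifying that the corrector $C^1$ must live in $L^2_{{\rm pot},\Gamma}(\Omega)$, in accordance with the fact that $c^\ve$ is genuinely discontinuous across the random surface $\widetilde\Gamma^\ve$. This requires a careful choice of admissible test fields allowed to be discontinuous across $\widetilde\Gamma(\omega)$ and a correct treatment of the boundary terms on $\widetilde\Gamma^\ve$ by the Palm measure/ergodic formalism invoked elsewhere in the paper. Once the duality $\bigl(L^2_{{\rm pot},\Gamma}(\Omega)\bigr)^\perp=L^2_{{\rm sol},\Gamma}(\Omega)$ is exploited at this stage, the two-scale identification and the claimed regularity of the limits follow.
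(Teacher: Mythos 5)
Your overall strategy matches the paper's: Kolmogorov--Riesz compactness for the strong $L^2(G_T)$ limits (spatial equicontinuity from the uniform $H^1(G)$ bounds on the extensions, temporal equicontinuity from the shift estimate \eqref{estim_time_h}), the Zhikov--Piatnitski stochastic two-scale compactness theorem for the gradient correctors, and the enlarged space $L^2_{{\rm pot},\Gamma}(\Omega)$ to accommodate the jump of $c^\ve$ across $\widetilde\Gamma^\ve$. Your explicit verification of the $L^\infty$ bounds via Fatou is a welcome addition that the paper leaves implicit.

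There is, however, a slip in Step~3 that needs fixing. The extension $\overline c^\ve$ of Lemma~\ref{extension}(ii) \emph{is} defined on all of $G$; it is the set $\widetilde G^\ve_{ef}$ \emph{from} which the extension is taken that excludes the $\ve\sigma$-neighbourhood of $\widetilde\Gamma^\ve$. Consequently $\overline c^\ve$ is bounded uniformly in $L^2(0,T;H^1(G))$, and the two-scale corrector of $\nabla\overline c^\ve$ lies in the \emph{smaller} space $L^2_{\rm pot}(\Omega)$, not in $L^2_{{\rm pot},\Gamma}(\Omega)$. Testing $\nabla\overline c^\ve$ against solenoidal fields discontinuous across $\widetilde\Gamma(\omega)$ therefore proves more orthogonality, not less, and would not by itself justify the claimed corrector class. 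To obtain $C^1\in L^2_{{\rm pot},\Gamma}(\Omega)$ you must work with $\nabla c^\ve$ itself, which is controlled only in $L^2(0,T;H^1(G\setminus\widetilde\Gamma^\ve))$ by estimate~\eqref{estim_b_c}; the admissible solenoidal test fields are then precisely those vanishing normal trace on $\widetilde\Gamma(\omega)$, and the orthogonality places $C^1$ in $(L^2_{{\rm sol},\Gamma}(\Omega))^\perp = L^2_{{\rm pot},\Gamma}(\Omega)$, as you intend. The paper instead treats three objects ($c^\ve_e$, $\overline c^\ve$, and $c^\ve$) and identifies the common limit $c_e=c$ by noting the extensions agree on the overlap $G^\ve_{e}\cap\widetilde G^\ve_{ef}$, which has a fixed volume fraction, so that $\mathcal{P}$-a.s.\ the ($\omega$-independent) limits must coincide; your argument can bypass this identification, but then needs to establish directly that the non-corrector part of the two-scale limit of $\nabla c^\ve$ is $\nabla c$, using the strong $L^2$ convergence together with the fact that the $\ve\sigma$-neighbourhood of $\widetilde\Gamma^\ve$ has vanishing measure and $c^\ve$ is bounded in $L^\infty(0,T;L^4)$.
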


\begin{proof}
The estimates in  \eqref{estim_extend}, together with compactness results for stochastic two-scale convergence, see \cite{Zhikov_Piatnitski_2006}, ensure that  for every ``typical'' realisation $\widetilde \omega \in \Omega$ there exist  $b_e, c_e,  c  \in L^2(0,T; H^1(G))$ and  $B_e^1$, $C_e^1$,  $\overline C^1\in L^2(G_T; L^2_{{\rm pot}}(\Omega))$,  such that
$\nabla b_e^\ve  \rightharpoonup \nabla b_e + B_e^1$,  $\nabla c_e^\ve \rightharpoonup \nabla c_e + C_e^1$, and $\nabla \overline c^\ve \rightharpoonup \nabla  c + \overline C^1$  stochastically  two-scale.
Estimates \eqref{estim_b_c} and \eqref{estim_extend}   and    the compactness of the embedding of $ H^1(G)$ in $L^2(G)$,  together with the Kolmogorov compactness theorem, see e.g.\  \cite{Brezis, Necas},  yield the strong convergence $b_e^\ve \to b_e$,  $c^\ve_e \to c_e$ and $\overline c^\ve \to c$  in $L^2(G_T)$ for $\mathcal P$-a.a.\  realisations $\widetilde \omega \in \Omega$.  Since  $G_{e,T}^\ve\cap \widetilde G_{ef, T}^\ve \neq \emptyset$,    $c_e^\ve(t,x) = \overline c^\ve(t,x)$ for a.a.\ $(t,x) \in G_{e,T}^\ve \cap \widetilde G_{ef, T}^\ve$, and $c_e$ and $c$ are independent of $\omega\in \Omega$,  we obtain that $c_e(t,x) = c(t,x)$  for a.a.\  $(t,x) \in G_T$ and $\mathcal P$-a.s in $\Omega$.

From the  estimates for $c^\ve= c_e^\ve \chi\big._{G^\ve_e}+ c_f^\ve \chi\big._{G^\ve_f}$ in \eqref{estim_b_c}  we obtain that   there exists $C^1 \in L^2(G_T; L_{{\rm pot}, \Gamma}(\Omega))$ such that $\nabla c^\ve   \rightharpoonup \nabla c + C^1$ stochastically   two-scale.
\end{proof}

\section{Derivation of macroscopic equations for flow velocity  and elastic deformations.}\label{macro_elasticity}
 To show the convergence of boundary terms  we shall prove the relation between convergence with respect to $\mathcal P$ in $G$ and Palm measure $\boldsymbol{\mu}$ on the oscillating surfaces $\Gamma^\ve$.

\begin{definition} \cite{Daley_VereJones_1988}
The Palm measure of the random stationary measure $\mu_\omega$ is the measure $\boldsymbol{\mu}$
on $(\Omega, \mathcal F)$ defined as
$$
\boldsymbol{\mu}(F) = \int_\Omega \int_{\mathbb R^3} \chi\big._{[0,1)^3}(x) \chi\big._F(\mathcal T_x \omega)\,  d \mu_\omega(x) \, d \mathcal P(\omega) \quad \text{ for }  \; F \in \mathcal F.
$$
\end{definition}

\begin{lemma}\label{lemma_trace_ineq}
For $u \in H^1(\Omega, \mathcal P)$ we have that $u \in L^2(\Omega, \boldsymbol{\mu})$, where $\boldsymbol{\mu}$ is the Palm measure of the random stationary measure  $\mu_\omega$ of surfaces $\Gamma(\omega)$  for realisations $\omega \in \Omega$, and the  embedding is continuous.
\end{lemma}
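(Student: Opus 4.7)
\emph{Proof plan.} The strategy is to lift $u$ to a physical-space realisation, apply a deterministic Lipschitz trace inequality component-by-component on $G_f(\omega)$, and then transfer the resulting estimate back to $(\Omega,\mathcal P)$ and $(\Omega,\boldsymbol{\mu})$ using Birkhoff's theorem together with the defining identity of the Palm measure.

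First I would reduce to smooth representatives. Since $C^1_\T(\Omega)$ is dense in $H^1_\T(\Omega)$, it suffices to prove
$$
\|u\|_{L^2(\Omega,\boldsymbol{\mu})}^2 \leq C\,\|u\|_{H^1_\T(\Omega)}^2
$$
for $u\in C^1_\T(\Omega)$ and pass to the limit by Fatou. For such $u$ and a typical $\omega\in\Omega$, set $v(x):=u(\T_x\omega)$, so that $v\in C^1(\mathbb R^3)$ with $\nabla v(x)=(\nabla_\omega u)(\T_x\omega)$.

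Next I would apply a uniform trace estimate to each connected component $\mathcal Q_j(\omega)\subset G_f(\omega)$. By Assumption~\ref{assum1} the components have a common Lipschitz constant, diameters bounded above and below, and uniformly bounded separations, so each $\mathcal Q_j$ admits a thickening $\mathcal Q_j^+\supset\overline{\mathcal Q_j}$ with $\{\mathcal Q_j^+\}$ of uniformly bounded overlap, and the classical inequality
$$
\int_{\partial\mathcal Q_j(\omega)}|v|^2\,d\sigma \leq C\int_{\mathcal Q_j^+(\omega)}\bigl(|v|^2+|\nabla v|^2\bigr)\,dx
$$
holds with $C$ independent of $j$ and $\omega$. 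Summing over $j$ with $\mathcal Q_j(\omega)\subset B_R(0)$ and dividing by $|B_R|$ gives
$$
\frac{1}{|B_R|}\int_{\Gamma(\omega)\cap B_R}|v|^2\,d\sigma \leq \frac{C'}{|B_R|}\int_{B_R^+}\bigl(|v|^2+|\nabla v|^2\bigr)\,dx.
$$
Letting $R\to\infty$, Birkhoff's theorem applied to the stationary fields $|u|^2(\T_\cdot\omega)$ and $|\nabla_\omega u|^2(\T_\cdot\omega)$ gives, for $\mathcal P$-a.e. $\omega$,
$$
\frac{1}{|B_R|}\int_{B_R^+}\bigl(|v|^2+|\nabla v|^2\bigr)\,dx \;\longrightarrow\; \|u\|_{L^2(\Omega)}^2+\|\nabla_\omega u\|_{L^2(\Omega)}^2 = \|u\|_{H^1_\T(\Omega)}^2,
$$
while the defining identity of the Palm measure (equivalently, the Neveu--Campbell ergodic theorem for the stationary random measure $\mu_\omega$) produces
$$
\frac{1}{|B_R|}\int_{B_R}|u(\T_x\omega)|^2\,d\mu_\omega(x) \;\longrightarrow\; \int_\Omega|u|^2\,d\boldsymbol{\mu}.
$$
Combining these two limits with the inequality above yields the continuous embedding $H^1_\T(\Omega)\hookrightarrow L^2(\Omega,\boldsymbol{\mu})$.

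The main obstacle is the second step: producing a single trace constant that works uniformly over all connected components of $G_f(\omega)$ and $\mathcal P$-a.e.\ realisation. This relies critically on Assumption~\ref{assum1} --- the common Lipschitz constant of the components (giving a universal trace operator, for example via a partition of unity on a finite reference atlas together with the standard half-space trace) and the positive lower bound on separations (so the thickenings $\mathcal Q_j^+$ can be taken disjoint, avoiding multiple counting). A secondary technicality is the approximation $C^1_\T(\Omega)\to H^1_\T(\Omega)$, which I would handle by applying the inequality along a sequence $u_n\to u$ in $H^1_\T(\Omega)$ and invoking Fatou's lemma on the Palm-measure side.
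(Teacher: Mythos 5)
Your proposal is correct and follows essentially the same route as the paper: lift to physical space, apply a uniform Lipschitz trace inequality justified by Assumption~\ref{assum1}, divide by volume, and pass to the limit via Birkhoff's theorem on the right-hand side and the Palm-measure ergodic identity on the left. The only genuine difference is your initial reduction to $C^1_\T(\Omega)$ followed by a Fatou/density argument; the paper dispenses with this by noting that $u\in H^1(\Omega,\mathcal P)$ already implies $u(\T_x\omega)\in H^1_{\rm loc}(\mathbb R^3)$ for $\mathcal P$-a.e.\ $\omega$, so the deterministic trace theorem and the $L^1$-version of Birkhoff's theorem can be applied directly at the $H^1$ level without smoothing first. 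Your density step is therefore superfluous rather than wrong, and if you keep it you should phrase the limit on the $L^2(\boldsymbol{\mu})$ side as a Cauchy-sequence argument (the proven inequality makes $(u_n)$ Cauchy in $L^2(\boldsymbol{\mu})$) rather than relying on pointwise Fatou, since $\boldsymbol{\mu}$-a.e.\ convergence of $u_n$ is not immediate from $H^1_\T$ convergence.
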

\begin{proof}
Consider $u \in H^1(\Omega, \mathcal P)$ and a random stationary measure $\mu_\omega$ given by  {  $d\mu_\omega(x)= {\bf 1}_{\Gamma(\omega)} d \sigma(x)$,  where $d\sigma(x)$ is the standard surface measure}.
By $\boldsymbol{\mu}$ we denote the Palm measure of the random stationary  measure $\mu_\omega$.
Let $\mathcal{Q}_\rho$ be the ball in $\mathbb R^3$ of radius $\rho$ centered at the origin.
Since $u\in H^1(\Omega, \mathcal P)$, then a.s. $u(\mathcal T_{x} \omega) \in H_{\rm loc}^1(\mathbb R^3)$.
Under our
assumptions by the trace theorem there exist $\delta>0$ and $C>0$ such that
\begin{equation}\label{trace}
\begin{aligned}
\int_{\Gamma(\omega)\cap \mathcal{Q}_\rho} |u(\mathcal T_x\omega)|^2 d \sigma(x)  \leq C
 \int_{{\mathcal Q}_{\rho+\delta}} |u(\mathcal T_x\omega)|^2 d x +
 C \int_{{\mathcal Q}_{\rho+\delta}} |\nabla u(\mathcal T_x\omega)|^2 d x
\end{aligned}
\end{equation}
$\mathcal P$-a.s.\ in $\Omega$.
We divide the left- and the right-hand sides of this relation by $\rho^3$ and pass to the limit,
as $\rho\to\infty$. By the Birkhoff theorem we obtain
$$
\int_{\Omega}|u(\omega)|^2\,d\boldsymbol{\mu}\leq C\Big[\int_{\Omega}|u(\omega)|^2\,d\mathcal{P}+
\int_{\Omega}|\nabla_\omega u(\omega)|^2\,d\mathcal{P}\Big].
$$
This yields the desired statement.
\end{proof}

\begin{proof}[Proof of Theorem~\ref{main_1}]  To derive macroscopic equations for the system of poro-elastic and Stokes equations, first we consider as test functions  in \eqref{weak_u_ef} the following functions

\begin{itemize}
\item $\phi(t,x)=\ve  \phi_1(t,x)\phi_2(\T_{x /\ve}  \widetilde \omega)$, \, $\phi_1 \in C^1_0(G_T)$,  \, $\phi_2 \in C^1_\T(\Omega)^3$
\item $\psi(t,x)= \ve \psi_1(t,x) \psi_2(\T_{x/\ve}  \widetilde \omega )$,  \,  $\psi_1 \in C^1_0(G_T)$, \,  $\psi_2 \in C^1_\T(\Omega)$,   $\eta_1 \in C^1_0(G_T)$
\item $\eta(t,x)= \ve \eta_1(t,x) \eta_2(\T_{x/\ve}  \widetilde \omega )$,   \,   $\eta_2 \in C^1_\T(\Omega)^3$, and $\phi_1(t,x)\Pi_\tau  \phi_2(\T_{\widetilde x }\widetilde \omega ) = \eta_1(t,x) \Pi_\tau \eta_2(\T_{\widetilde x }  \widetilde \omega)$
\end{itemize}
for $(t,x) \in G_T$,  $\widetilde x  \in \Gamma(\widetilde \omega)$, and $\mathcal P$-a.a.\ realisations $\widetilde \omega \in \Omega$.
To apply stochastic two-scale convergence of $u_e^\ve$, $p_e^\ve$, and $\partial_t u_f^\ve$,  we rewrite the boundary integrals  over $\Gamma^\ve$ in the weak formulation \eqref{weak_u_ef}  as volume integrals
\begin{equation}\label{weak_u_ef_2}
\begin{aligned}
&\langle  \rho_e \partial^2_t u^\ve_e, \phi \chi\big._{G_e^\ve} \rangle_{G_T} + \langle{\bf E}^\ve(b_e^\ve) \e( u^\ve_e), \e(\phi)  \chi\big._{G_e^\ve}  \rangle_{G_T}  +
\langle \nabla p_e^\ve, \phi\,   \chi\big._{G_e^\ve}  \rangle_{G_T}
+
\langle \rho_p \partial_t p^\ve_e, \psi \,  \chi\big._{G_e^\ve}\rangle_{G_T} \\
& + \langle K_p^\ve \nabla p^\ve_e -  \partial_t u_e^\ve, \nabla \psi  \chi\big._{G_e^\ve}  \rangle_{G_T}  - \langle \partial_t u_f^\ve , \nabla \psi  \chi\big._{G_f^\ve}  \rangle_{G_T}  + \langle \nabla p_e^\ve, \eta \,   \chi\big._{G_f^\ve}   \rangle_{G_T}  +  \langle p_e^\ve, {\rm div}\,\eta  \,  \chi\big._{G_f^\ve} \rangle_{G_T}   \\
& +  \langle  \rho_f \partial^2_t u^\ve_f, \eta \,   \chi\big._{G_f^\ve} \rangle_{G_T} + \mu \,  \ve^2  \langle \e(\partial_t u^\ve_f), \e(\eta)\,   \chi\big._{G_f^\ve}  \rangle_{G_T}  -
\langle p_f^\ve, \text{\rm div}\,\eta \,   \chi\big._{G_f^\ve} \rangle_{G_T}  \\
& = \langle F_u, \phi \rangle_{(\partial G)_T} +  \langle F_p, \psi \rangle_{(\partial G)_T}.
\end{aligned}
\end{equation}
Here we have used the relation ${\rm div} \partial_t u_f^\ve =0$ in $G_{f,T}^\ve$ and the fact that $\chi\big._{G_j^\ve}(x, \omega)= \chi\big._{\Omega_j}(\mathcal T_{x/\ve} \omega)$  $\mathcal P$-a.s.\  in $\Omega$, where $j=e,f$.
Using the convergence results in Lemma~\ref{convergence_u_p}   and passing to the limit $\ve\to 0$ we obtain
\begin{equation}\label{eq:limit_1}
\begin{aligned}
\langle \widetilde{\bf E}(\omega, b_{e,3}) (\e( u_e) + U_{e, {\rm sym}}^1),  \phi_1 \e_\omega(\phi_2)\,  \chi\big._{\Omega_e} \rangle_{G_{T}\times \Omega}
 +  \langle \widetilde K_p(\omega) ( \nabla p_e + P_e^1)- \partial_t u_e,  \psi_1 \nabla_\omega \psi_2\,  \chi\big._{\Omega_e} \rangle_{G_{T}\times \Omega} \\
-  \langle  \partial_t u_f, \psi_1\nabla_\omega \psi _2 \chi\big._{\Omega_f}  \rangle_{G_T\times \Omega}
+  \langle p_e,  \eta_1  {\rm div}_ \omega \eta_2 \, \chi\big._{\Omega_f}  \rangle_{G_T\times \Omega}
 -
 \langle  p_f, \eta_1 \text{div}_\omega \eta_2\, \chi\big._{\Omega_f}  \rangle_{G_{T}\times \Omega}
 = 0.
\end{aligned}
\end{equation}
Letting first $\psi_1\equiv 0$ and $\eta_1\equiv 0$ and then $\phi_1\equiv 0$ and $\eta_1\equiv 0$ we obtain   the equations for the correctors  $U_e^1$ and $P_e^1$, i.e.\
\begin{equation}\label{correctos_ue}
\begin{aligned}
\big \langle \widetilde {\bf E}(\omega, b_{e,3}) ({\rm \e}( u_e) + U^1_{e, {\rm sym}}) \chi\big._{\Omega_e} , \phi_1 \, \e_\omega (\phi_2)  \big \rangle_{G_T\times \Omega} = 0,
\end{aligned}
\end{equation}
and
\begin{equation}\label{correctos_pe}
\begin{aligned}
\langle \big(\widetilde K_p(\omega) ( \nabla p_e +P_e^1)- \partial_t u_e\big) \chi\big._{\Omega_e} - \partial_t u_f\,  \chi\big._{\Omega_f} , \psi_1\,  \nabla_\omega \psi_2   \rangle_{G_T\times \Omega}  &= 0 .
\end{aligned}
\end{equation}
From \eqref{eq:limit_1}  considering $\phi_1\equiv 0$ and $\psi_1\equiv 0$ also yields
\begin{equation*}
p_f \, \chi\big._{\Omega_f} = p_e \,  \chi\big._{\Omega_f}   \qquad \text{ in } G_T\times \Omega.
\end{equation*}
Next, choosing in \eqref{weak_u_ef} test functions of the form $(\phi(t, x), \psi(t, x), \eta(t, x, x/\ve))$,  where
\begin{itemize}
\item  $\phi\in C^\infty(\overline G_T)^3$ and  $\psi\in C^\infty(\overline G_T)$,
\item   $\eta(t,x, x/\ve) =  \eta_1(t,x) \,  \eta_2(\T_{x/\ve}\omega)$,  where  $\eta_1\in C^\infty(\overline G_T)$,  $\eta_2\in  C^1_\T(\Omega)^3$,   with   ${\rm div}_\omega \eta_2 =0$ for $\mathcal P$-a.a. $\omega \in \Omega$,  and
  $ \eta_1(t,x) \, \Pi_\tau \eta_2(\T_{\widetilde x }  \omega) = \Pi_\tau \phi(t,x)$  for $(t,x) \in G_T$,  $\widetilde x \in \Gamma(\omega)$, and $\mathcal P$-a.s. in $\Omega$,
\end{itemize}
we  obtain
\begin{equation}\label{weak_form_lim}
\begin{aligned}
&\langle \rho_e  \partial^2_t u^\ve_e,  \phi\,  \chi\big._{G_e^\ve}  \rangle_{G_T} + \langle {\bf E}^\ve(b^\ve_e) \e( u^\ve_e),  \e(\phi) \,  \chi\big._{G_e^\ve}  \rangle_{G_T}  +
\langle \nabla p_e^\ve,  \phi \,  \chi\big._{G_e^\ve}  \rangle_{G_T}  \\ &+
\langle \rho_p  \partial_t p^\ve_e,  \psi \,  \chi\big._{G_e^\ve} \rangle_{G_T}
+ \langle K_p^\ve \nabla p^\ve_e - \partial_t u_e^\ve,  \nabla \psi  \,   \chi\big._{G_e^\ve} \rangle_{G_T}
-\langle \partial_t u_f^\ve ,   \nabla \psi \,  \chi\big._{G_f^\ve} \rangle_{G_T}\\
&+ \langle \nabla p_e^\ve,  \eta_1\, \eta_2 \rangle_{G_{T}}-  \langle \nabla p_e^\ve,  \eta_1\, \eta_2 \,  \chi\big._{G_e^\ve} \rangle_{G_T}
+  \langle p_e^\ve, {\rm div}_x  \eta_1\, \eta_2 \,  \chi\big._{G_f^\ve} \rangle_{G_T} \\
& + \langle  \rho_f \partial^2_t u^\ve_f,  \eta_1\, \eta_2 \,  \chi\big._{G_f^\ve}  \rangle_{G_T}
+ \mu\,   \ve^2  \langle \e(\partial_t u^\ve_f), [ \e(\eta_1)\eta_2 +  \ve^{-1} \eta_1 \e_\omega(\eta_2)]\,  \chi\big._{G_f^\ve}  \rangle_{G_T}  \\
& -
\langle p_f^\ve,  \text{div}_x \eta_1\, \eta_2 \,   \chi\big._{G_f^\ve}  \rangle_{G_T}    = \langle F_u, \phi \rangle_{(\partial G)_T} + \langle F_p, \psi \rangle_{(\partial G)_T}.
\end{aligned}
\end{equation}
Letting $\ve \to 0$ and using the stochastic two-scale and strong convergences of $u_e^\ve$ and $p_e^\ve$, the strong convergence of $b_e^\ve$,  and  the stochastic  two-scale convergence of $\partial_t u_f^\ve$  we obtain
\begin{equation}\label{macro_111}
\begin{aligned}
&\langle  \rho_e \partial^2_t u_e ,  \phi \, \chi\big._{\Omega_e}\rangle_{G_{T}, \Omega}
 + \langle \widetilde{\bf E}(\omega, b_{e,3}) \big({\rm \e}( u_e)+ U_{e, {\rm sym}}^1 \big), {\rm \e}(\phi) \,  \chi\big._{\Omega_e} \rangle_{G_{T}, \Omega}  +
\langle \nabla p_e+ P_e^1, \phi\,  \chi\big._{\Omega_e} \rangle_{G_{T},  \Omega}
\\
& +
\langle\rho_p  \partial_t p_e,  \psi \,  \chi\big._{\Omega_e}  \rangle_{G_{T},  \Omega}
 + \langle  \widetilde K_p(\omega)( \nabla p_e+  P_e^1)  - \partial_t u_e,   \nabla \psi \, \chi\big._{\Omega_e}\rangle_{G_{T}, \Omega}  -  \langle \partial_t u_f, \nabla \psi \,  \chi\big._{\Omega_f} \rangle_{G_T, \Omega}  \\
 &  +
\langle \nabla p_e,  \eta_1 \eta_2\, \chi\big._{\Omega_f}  \rangle_{G_{T}, \Omega}
+  \langle  P_e^1,  \eta_1 \eta_2  \rangle_{G_{T}\times \Omega} -  \langle  P_e^1,   \eta_1 \eta_2 \,  \chi\big._{\Omega_e} \rangle_{G_{T}, \Omega}\\
& + \langle  \rho_f \partial^2_t u_f,  \eta_1\eta_2 \,  \chi\big._{\Omega_f} \rangle_{G_{T}, \Omega}
 + \mu    \langle \e_\omega(\partial_t u_f),   \eta_1\e_\omega(\eta_2)\, \chi\big._{\Omega_f}  \rangle_{G_{T},  \Omega}     = \langle F_u, \phi \rangle_{(\partial G)_T} + \langle F_p, \psi \rangle_{(\partial G)_T}.
\end{aligned}
\end{equation}
Here we used the fact that $ \chi\big._{\Omega_f} \, p_f= \chi\big._{\Omega_f} \, p_e$ in $G_T\times \Omega$.  Since $P_e^1 \in L^2(G_T; L^2_{\rm pot}(\Omega))$ and  $\eta_1\in C(\overline G_T)$, $\eta_2 \in L^2_{\rm sol}(\Omega)$ we obtain that
$$
  \langle  P_e^1,  \eta_1\, \eta_2  \rangle_{G_{T},  \Omega} =0.
$$

The stochastic  two-scale convergence of $\partial_t u_f^\ve$  and the fact that  $\partial_t u_f^\ve$  is divergence-free  in $G_T$ (we identify here $\partial_t u_f^\ve$ with its  extension  constructed in Lemma~\ref{cor_estim_extend}) imply
\begin{equation*}
\begin{aligned}
0= \lim\limits_{\ve\to 0}\langle \text{div}\, \partial_t u_f^\ve, \ve \eta(t, x, x/\ve) \rangle_{G_T} =
- \lim\limits_{\ve\to 0}  \langle  \partial_t u_f^\ve, \ve \nabla_x\eta + \nabla_\omega \eta  \rangle_{G_T}\\  =
- \langle  \partial_t u_f, \nabla_\omega \eta  \rangle_{G_T\times \Omega}  = \langle  \text{div}_\omega \partial_t  u_f,   \eta \, \rangle_{G_T\times \Omega}.
\end{aligned}
\end{equation*}
Thus $\text{div}_{\omega}\, \partial_t u_f =0$ a.e.\ in $G_T$ and $\mathcal P$-a.s.\  in  $\Omega$.

Choosing $\phi\equiv 0$ and $\psi\equiv 0$, and taking  $\eta= \eta_1 \eta_2$, where
$\eta_1 \in C^1_0(G_T)$ and  $\eta_2 \in C^1_\T(\Omega)^3$,  with $\text{ div}_\omega \eta_2 = 0$ and  $\Pi_\tau \eta_2(\mathcal T_x \omega) =0$ on $\Gamma(\omega)$  $\mathcal P$-a.s.\ in  $\Omega$,  we conclude that $\partial_t u_f$ is a solution to  problem \eqref{macro_two-scale_uf}.
Taking   $\eta = \eta_1 \eta_2$, with  $\eta_2 = {\rm const}$  and $\eta_1 \in C^1_0(0,T; C^1(\overline G))^3$ as a test function in    \eqref{macro_two-scale_uf}   yields
\begin{equation}\label{expr_Pe1}
\langle P_e^1,  \eta_1 \chi\big._{\Omega_e} \rangle_{G_T, \Omega} = \langle \rho_f \partial^2_t  u_f + \nabla p_e, \eta_1 \chi\big._{\Omega_f} \rangle_{G_T, \Omega}.
\end{equation}

 Next we have to determine  the boundary conditions for tangential components of $\partial_t u_f$ on $\Gamma(\omega)$ for $\mathcal P$-a.a. $\omega \in \Omega$.
From a priori estimates for $\partial_t u_e^\ve$ and $\partial_t u_f^\ve$ we have that
$$
\begin{aligned}
&\ve \|\partial_t u^\ve_e\|^2_{L^2(\Gamma_{T}^\ve)} \leq C_1\big(  \|\partial_t u^\ve_e\|^2_{L^2(G_{e,T}^\ve)} +\ve^2 \|\nabla \partial_t u^\ve_e\|^2_{L^2(G_{e,T}^\ve)}\big) \leq C_2, \\
&\ve \|\partial_t u^\ve_f\|^2_{L^2(\Gamma_{T}^\ve)} \leq C_3\big(  \|\partial_t u^\ve_f\|^2_{L^2(G_{f,T}^\ve)} +\ve^2 \|\nabla \partial_t u^\ve_f\|^2_{L^2(G_{f,T}^\ve)}\big) \leq C_4,
\end{aligned}
$$
where the constants $C_j$, with $j=1,2,3,4$, are independent of $\ve$.
Thus  using Lemmata~\ref{lemma_trace_ineq}   and \ref{converg_bound_Gamma} and the fact that $\partial_t u_f \in L^2(G_T; H^1(\Omega))$ and $\partial_t u_e \in L^2(0,T; H^1(G))$ we obtain
$$
\begin{aligned}
\int_{G_T} \int_\Omega \Pi_\tau  \partial_t u_f(t,x,\omega) \psi_1(t,x)\psi_2( \omega) \, d\boldsymbol{\mu} dx dt = \lim\limits_{\ve \to 0} \ve \int_{\Gamma^\ve_T} \Pi_\tau  \partial_t u^\ve_f (t,x) \psi_1(t,x) \psi_2(\T_{x/\ve} \widetilde\omega) \,   d\sigma^\ve dt
\\ = \lim\limits_{\ve \to 0} \ve \int_{\Gamma^\ve_T} \Pi_\tau  \partial_t u^\ve_e(t,x) \psi_1(t,x) \psi_2(\T_{x/\ve} \widetilde\omega)  \,  d\sigma^\ve dt= \int_{G_T} \int_\Omega \Pi_\tau  \partial_t u_e(t,x) \psi _1(t,x) \psi_2(\omega)\,  d\boldsymbol{\mu} dx dt
\end{aligned}
$$
for $\psi_1 \in C^1_0(G_T)$, $\psi_2 \in C^1(\Omega)$ and typical realisations $\widetilde \omega \in \Omega$.  Thus for each typical realisation $\widetilde \omega \in \Omega$ we have
$$
\Pi_\tau \partial_t u_f = \Pi_\tau \partial_t u_e \qquad \text{on}  \quad G_T\times \Gamma(\widetilde \omega).
$$


Considering first $\phi\in C^\infty_0(G_T)^3$,  $\psi\in C^\infty_0(G_T)$,  and then
$\phi\in C^\infty(\overline G_T)^3$,  $\psi\in C^\infty(\overline G_T)$,  and using  equality  \eqref{expr_Pe1} together with
\begin{equation}\label{corrector_U}
U_e^1= \sum_{k,l=1}^3 \e(u_e(t,x))_{kl}  W_e^{kl}(t,x,\omega),
\end{equation}
where $W_e^{kl}$ are solutions of the first equations in \eqref{unit_1},  yield the macroscopic equations for $u_e$:
\begin{equation}
\begin{aligned}
\vartheta_ e \rho_e \partial_t^2 u_e  -{\rm div} \big( {\bf E}^{\rm hom} (b_{e,3}) \e(u_e)\big) +  \nabla p_e + \int_{\Omega} \rho_f \partial^2_t u_f \, \chi_{\Omega_f}  d\mathcal P(\omega) & = 0  \qquad&&  \text{ in } G_T, \\
 {\bf E}^{\rm hom} (b_{e,3}) \e(u_e)\, n &  = F_u  && \text{ on } (\partial G)_T,
\end{aligned}
\end{equation}
where  ${\bf E}^{\rm hom}$ is defined by \eqref{effective},  as well as the equation
\begin{equation}\label{macro_pe_11}
\begin{aligned}
\vartheta_e \rho_p \partial_t p_e  -{\rm div}\Big( \int_{\Omega} \Big[\big(\widetilde K_p(\omega)(\nabla p_e +  P_e^1)   - \partial_t u_e \big) \chi_{\Omega_e} -  \partial_t u_f \, \chi_{\Omega_f} \Big] \, d\mathcal P(\omega) \Big)&= 0 && \text{ in }  G_T, \\
 \qquad \Big( \int_{\Omega}\Big[ \big(\widetilde K_p(\omega)(\nabla p_e +P_e^1)   -  \partial_t u_e\big) \chi_{\Omega_e} - \partial_t u_f\, \chi_{\Omega_f} \Big] \, d\mathcal P(\omega)  \Big)\cdot n  &= F_p &&
 \text{ on } (\partial G)_T,
\end{aligned}
\end{equation}
together with  problem \eqref{correctos_pe} for $ P_e^1$.
The structure of the problem \eqref{correctos_pe} suggests that $P_e^1$ should be of the form
\begin{equation}\label{struct_Pe11}
\begin{aligned}
P_e^1(t,x, \omega)= \sum_{k=1}^3 \frac {\partial p_e}{\partial x_k} (t,x) \, W^k_p(\omega) + \sum_{k=1}^3 \partial_{t} u_e^k(t,x) \,  W^k_u(\omega) + Q_f(\omega, \partial_t  u_f),
\end{aligned}
\end{equation}
where
$W^k_p$ and $W^k_u$ are solutions of cell problems \eqref{unit_1}, and $Q_f$ is a solution of problem \eqref{two-scale_qf}.
Substituting the right-hand side of \eqref{struct_Pe11} for $P_e^1$ in \eqref{macro_pe_11} we obtain the macroscopic equations for $p_e$ in   \eqref{macro_ue},
where   $K_p^{\rm hom}$ and $K_u$ are  defined in~\eqref{effective}.
\end{proof}

\section{Strong stochastic  two-scale convergence of $\e(u^\ve_e)$, $\nabla p_e^\ve$,  and $\partial_t u_f^\ve$. }\label{strong_convergence}
Due to the presence of nonlinear functions depending on  $\e(u^\ve_e)$ and
$\partial_t u_f^\ve$ in equations for $b_e^\ve$, $c_e^\ve$, and $c_f^\ve$,  in order to derive the macroscopic equations for $b_e$ and $c$ we have to show that $\e(u^\ve_e)$ and $\partial_t u_f^\ve$ converge stochastically two-scale strongly.
\begin{lemma}\label{lem_strong_two-scale} For a   subsequences of $\{u^\ve_e\}$, $\{ p_e^\ve \}$  and $\{\partial_t u_f^\ve\}$  as in Lemma~\ref{convergence_u_p} (denoted again by $\{u^\ve_e\}$, $\{ p_e^\ve \}$, and $\{\partial_t u_f^\ve\}$) we have
\begin{equation}
\begin{aligned}
&\chi_{G^\ve_e} \e(u_e^\ve) \to \chi_{\Omega_e} \,  (\e(u_e) + U_{e, {\rm sym}}^1) && \text{ strongly stochastic two-scale} , \\
&\chi_{G^\ve_e}  \nabla p_e^\ve \to   \chi_{\Omega_e} \, (\nabla p_e + P_e^1) && \text{ strongly stochastic two-scale} , \\
&\chi_{G^\ve_f} \partial_t u_f^\ve \to  \chi_{\Omega_f} \partial_t u_f && \text{ strongly stochastic two-scale}.
\end{aligned}
\end{equation}
\end{lemma}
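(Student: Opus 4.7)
The plan is an energy–method argument in the spirit of the deterministic two-scale theory: I would derive an energy identity at the $\ve$-level by testing the coupled weak formulation \eqref{weak_u_ef} with the solution itself, derive the analogous two-scale identity by testing the macroscopic/two-scale system \eqref{macro_ue}--\eqref{macro_two-scale_uf} with its solution, and then exploit the fact that equality of the two right-hand sides combined with lower semicontinuity of the coercive quadratic forms in $E_\ve$ and $D_\ve$ forces $\liminf$ to be a limit. Coupled with the weak two-scale convergence of Lemma~\ref{convergence_u_p} and the coercivity of ${\bf E}^\ve$ and $K_p^\ve$ (Assumptions~\textbf{A2},~\textbf{A3}), this norm convergence will give the three strong two-scale convergences by the standard Hilbert space identity.

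Concretely, I would take in \eqref{weak_u_ef} the triple $(\phi,\psi,\eta)=(\partial_t u_e^\ve, p_e^\ve,\partial_t u_f^\ve)$ truncated at time $t$; admissibility follows from $\mathrm{div}\,\partial_t u_f^\ve=0$ in $G_{f,T}^\ve$ and $\Pi_\tau\partial_t u_e^\ve=\Pi_\tau\partial_t u_f^\ve$ on $\Gamma^\ve_T$. The interior couplings $\langle\nabla p_e^\ve,\partial_t u_e^\ve\rangle_{G_e^\ve}$ cancel against the corresponding term in the pressure equation, and the transmission fluxes $\langle\partial_t u_f^\ve\cdot n,p_e^\ve\rangle_{\Gamma^\ve}-\langle p_e^\ve,\partial_t u_f^\ve\cdot n\rangle_{\Gamma^\ve}$ also cancel. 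Writing $\langle{\bf E}^\ve(b_e^\ve)\e(u_e^\ve),\e(\partial_t u_e^\ve)\rangle$ as a time derivative plus a memory correction coming from the dependence of ${\bf E}^\ve$ on $b_{e,3}^\ve$, and integrating from $0$ to $t$, I obtain the schematic identity
\begin{equation*}
E_\ve(t)+\int_0^t D_\ve(s)\,ds=E_\ve(0)+\int_0^t W_\ve(s)\,ds+\tfrac12\int_0^t \big\langle\partial_t{\bf E}^\ve(b_e^\ve)\e(u_e^\ve),\e(u_e^\ve)\big\rangle_{G_e^\ve}\,ds,
\end{equation*}
where $E_\ve$ collects the kinetic, elastic and storage energies (containing $\rho_e|\partial_t u_e^\ve|^2$, ${\bf E}^\ve(b_e^\ve)\e(u_e^\ve):\e(u_e^\ve)$, $\rho_p|p_e^\ve|^2$, $\rho_f|\partial_t u_f^\ve|^2$), $D_\ve$ is the dissipation $K_p^\ve|\nabla p_e^\ve|^2+\mu\ve^2|\e(\partial_t u_f^\ve)|^2$, and $W_\ve$ is the boundary work from $F_u,F_p$. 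The analogous identity at the two-scale level would be obtained by testing \eqref{macro_ue}--\eqref{macro_two-scale_uf} with $(\partial_t u_e,p_e,\partial_t u_f)$ and invoking the corrector problems \eqref{unit_1}, \eqref{two-scale_qf} together with the representations \eqref{corrector_U}, \eqref{struct_Pe11}, so that the cross-terms between macroscopic and corrector pieces reassemble into the homogenized quadratic forms ${\bf E}^{\rm hom}(b_{e,3})\e(u_e):\e(u_e)$, $K_p^{\rm hom}\nabla p_e\cdot\nabla p_e$, and the extra kinetic integral $\int_\Omega \rho_f|\partial_t u_f|^2\chi_{\Omega_f}\,d\mathcal P$.

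Passing to the limit $\ve\to 0$ in the $\ve$-identity, the boundary work $W_\ve$ converges by trace continuity and the strong convergence of $u_e^\ve$, $p_e^\ve$ in Lemma~\ref{convergence_u_p}, and $E_\ve(0)$ converges to the stated two-scale initial energy by the Remark after Assumption~\ref{assumptions1}. The memory correction converges to its natural two-scale limit because $b_{e,3}^\ve\to b_{e,3}$ strongly in $L^2(G_T)$ by Lemma~\ref{convergence_b_c}, $\widetilde{\bf E}_1\in C(\Omega;C_b^2(\mathbb R))$ and $\kappa(0)=0$ by Assumption~\textbf{A2}, so
$$
\partial_t{\bf E}^\ve(b_e^\ve)=\big(\partial_\xi\widetilde{\bf E}_1\big)(\T_{x/\ve}\omega,\mathcal K(b_{e,3}^\ve))\,\partial_t\mathcal K(b_{e,3}^\ve)
$$
is a product of a strongly stochastically two-scale convergent oscillating factor with a strongly $L^2$-convergent deterministic one. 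On the left-hand side, each summand of $E_\ve(t)$ and $\int_0^t D_\ve$ is a coercive quadratic form which is sequentially weakly lower semicontinuous with respect to stochastic two-scale convergence as in \cite{Zhikov_Piatnitski_2006}, so $\liminf_\ve\bigl[E_\ve(t)+\int_0^t D_\ve\bigr]\ge E(t)+\int_0^t D$ with $E,D$ the corresponding two-scale quadratic forms. Comparing with the macroscopic energy identity forces equality, which yields the norm convergences
$$
\int_{G_{e,T}^\ve}{\bf E}^\ve(b_e^\ve)\e(u_e^\ve):\e(u_e^\ve)\to\int_{G_T\times\Omega}\widetilde{\bf E}(\omega,b_{e,3})(\e(u_e)+U_{e,\mathrm{sym}}^1):(\e(u_e)+U_{e,\mathrm{sym}}^1)\chi_{\Omega_e}\,d\mathcal P,
$$
and the analogous identities for $K_p^\ve|\nabla p_e^\ve|^2\chi_{G_e^\ve}$ and $\rho_f|\partial_t u_f^\ve|^2\chi_{G_f^\ve}$. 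Combined with the weak two-scale convergences of Lemma~\ref{convergence_u_p} and with the coercivity of ${\bf E}^\ve(b_e^\ve)$ and $K_p^\ve$, this gives the three asserted strong two-scale convergences.

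The hardest point I expect is the memory correction: since $\e(u_e^\ve)$ only converges weakly two-scale and the coefficient $\partial_t{\bf E}^\ve(b_e^\ve)$ is not sign-definite, one cannot pass to the limit in $\int\partial_t{\bf E}^\ve(b_e^\ve)\e(u_e^\ve):\e(u_e^\ve)$ by lower semicontinuity alone; I would close this by a Gronwall inequality on the difference between the $\ve$- and the macroscopic energies, exploiting the uniform $L^\infty$ boundedness of $\partial_t{\bf E}^\ve$ together with the strong two-scale convergence of its coefficient and then sending $t\uparrow T$. A secondary technical point is the handling of the $\Gamma^\ve$ transmission terms and of the external traces on $\partial G$ during the cancellations, which relies on the Palm-measure trace inequality of Lemma~\ref{lemma_trace_ineq} and on the surface convergence machinery already used in the proof of Theorem~\ref{main_1}.
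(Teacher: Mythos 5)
Your overall strategy is the same as the paper's: test the microscopic weak formulation \eqref{weak_u_ef} with $(\partial_t u_e^\ve,p_e^\ve,\partial_t u_f^\ve)$, derive the energy identity, pass to the limit in the right-hand side using the strong convergences, invoke weak lower semicontinuity of the coercive quadratic forms on the left, and compare against the two-scale energy identity obtained from \eqref{correctos_ue}, \eqref{correctos_pe}, \eqref{macro_two-scale_uf}. The cancellations of the cross-pressure terms and the transmission fluxes that you describe are exactly what happens.

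However, you leave a genuine gap precisely at the point you flag as hardest, and your proposed repair has a circularity problem. The memory correction $\tfrac12\int_0^t\langle\partial_t{\bf E}^\ve(b_{e,3}^\ve)\,\e(u_e^\ve),\e(u_e^\ve)\rangle_{G_e^\ve}\,ds$ sits on the \emph{right}-hand side of your energy balance with an indefinite sign, and $\e(u_e^\ve)$ only converges weakly two-scale, so you cannot pass to the limit in it. Your Gronwall fix would require an estimate of the form $|E_\ve(t)-E(t)|\le o(1)+C\int_0^t|E_\ve(s)-E(s)|\,ds$, but lower semicontinuity only gives $\liminf_\ve E_\ve(s)\ge E(s)$ \emph{along a subsequence}; it provides no control on $|E_\ve(s)-E(s)|$ for finite $\ve$, and in particular no pointwise sign for the difference. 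Bounding the memory difference by $\|\e(u_e^\ve)-(\e(u_e)+U^1_{e,\mathrm{sym}})(\cdot,\mathcal T_{\cdot/\ve}\omega)\|_{L^2}$ is not available either, since that quantity is precisely what the lemma is supposed to control. So the Gronwall-on-the-difference scheme as stated does not close.

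The device you are missing is the exponential weight. The paper multiplies the test functions by $\varrho^2(t)$ with $\varrho(t)=e^{-\gamma t}$. Because Assumption~{\bf A2} (with $\widetilde{\bf E}_1\in C(\Omega;C^2_b(\mathbb R))$ and $\kappa(0)=0$) makes $\partial_t\widetilde{\bf E}_1$ bounded, one can pick $\gamma$ so large that
$\bigl(2\gamma\,\widetilde{\bf E}_1(\omega,\mathcal K(\eta))-\partial_t\widetilde{\bf E}_1(\omega,\mathcal K(\eta))\bigr)A\cdot A\ge 0$
for all symmetric $A$. The weighted identity then produces, on the \emph{left}, the term
$\tfrac12\int_0^s\langle\varrho^2\bigl(2\gamma{\bf E}^\ve-\partial_t{\bf E}^\ve\bigr)\e(u_e^\ve),\e(u_e^\ve)\rangle_{G_e^\ve}\,dt$,
which is a \emph{nonnegative} quadratic form. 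Lower semicontinuity (for $({\bf E}^\ve)^{1/2}\e(u_e^\ve)$, $(2\gamma{\bf E}^\ve-\partial_t{\bf E}^\ve)^{1/2}\e(u_e^\ve)$, $(K_p^\ve)^{1/2}\nabla p_e^\ve$, etc.) then applies to every summand of the weighted energy functional $\mathcal E^\ve$, and matching with the weighted two-scale identity (obtained from \eqref{correctos_ue}, \eqref{correctos_pe}, \eqref{macro_two-scale_uf} with test functions $\partial_t u_e\varrho^2$, $p_e\varrho^2$, $\partial_t u_f\varrho^2$, together with $\langle P_e^1,\partial_t u_f\varrho^2\rangle_{G_s,\Omega}=0$ by the $L^2_{\rm pot}$/$L^2_{\rm sol}$ orthogonality) pins down the limit of each term. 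This is morally "Gronwall," but built into the test function so that the indefinite memory piece never has to be controlled on the wrong side. Incorporate this weight and your outline becomes essentially the paper's proof.
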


\begin{proof} Similar to the periodic case \cite{Andrey_Mariya},  to show the  strong stochastic two-scale convergence of  $\e(u^\ve_e)$,
$p_e^\ve$, and
$\partial_t u_f^\ve$ we prove  the convergence of  the energy related to the equations for $u_e^\ve$, $p_e^\ve$, and $\partial_t u_f^\ve$.
We consider a monotone decreasing function   $\varrho:\mathbb R_+\to \mathbb R_+$, e.g.\ $\varrho(t)= e^{-\gamma t}$  for $t \in \mathbb R_{+}$,  and define    the energy functional for the microscopic problem \eqref{equa_cla}--\eqref{exbou_co} as
\begin{equation}
\begin{aligned}
\hspace{-0.4 cm } \mathcal E^\ve(u_e^\ve, p_e^\ve, \partial_t u^\ve_f) &= \frac 12\rho_e  \|\partial_t u_e^\ve(s)\varrho(s) \|^2_{L^2(G_e^\ve)} -  \rho_e \langle \varrho^\prime(\cdot)\varrho(\cdot)\, \partial_t u_e^\ve,  \partial_t u_e^\ve  \rangle_{G_{e,s}^\ve} \\
& + \frac 12  \langle {\bf E}^\ve(b^\ve_{e,3}) \e(u^\ve_e)(s), \e(u_e^\ve)(s)\varrho^2(s) \rangle_{G_{e}^\ve} \\
 & -  \frac 12 \left\langle \big( 2  \varrho^\prime(\cdot) \varrho(\cdot) {\bf E}^\ve(b^\ve_{e,3})  +  \varrho^2(\cdot) \partial_t {\bf E}^\ve(b^\ve_{e,3})\big)  \e(u^\ve_e),  \e(u^\ve_e)\right\rangle_{G_{e, s}^\ve}
\\ & +\frac 12 \rho_p \|p_e^\ve(s)\varrho(s)\|^2_{L^2(G_e^\ve)} - \rho_p \langle \varrho^\prime(\cdot)\varrho(\cdot),  |p_e^\ve|^2 \rangle_{G_{e,s}^\ve}
+ \langle K^\ve_p \nabla p_e^\ve \varrho(\cdot), \nabla p_e^\ve \varrho(\cdot)\rangle_{G_{e,s}^\ve}
\\
& + \frac 12 \rho_f  \|\partial_t u_f^\ve(s)\varrho(s)\|^2_{L^2(G_f^\ve)}  - \rho_f  \langle \varrho^\prime(\cdot)\varrho(\cdot)\, \partial_t u_f^\ve,  \partial_t u_f^\ve  \rangle_{G_{f,s}^\ve}
+ \mu \| \ve \varrho(\cdot) \e(\partial_t u_f^\ve)\|^2_{L^2(G_{f,s}^\ve)}
\end{aligned}
\end{equation}
for $s \in (0,T)$ and $\mathcal P$-a.a.\ $\omega \in \Omega$. Considering  $\partial_t u_e^\ve\, \varrho^2$, $p_e^\ve\, \varrho^2$, and $\partial_t u_f^\ve\, \varrho^2$ as test functions in  \eqref{weak_u_ef}  yields  the   equality
\begin{equation}
\begin{aligned}
\mathcal E^\ve(u_e^\ve, p_e^\ve, \partial_t u^\ve_f) = \frac 12\rho_e \|\partial_t u_e^\ve(0)\|^2_{L^2(G_e^\ve)} +
 \frac 12  \langle {\bf E}^\ve(b^\ve_{e,3}) \e(u^\ve_e)(0), \e(u_e^\ve)(0) \rangle_{G_{e}^\ve} +
\frac 12 \rho_f \|\partial_t u_f^\ve(0)\|^2_{L^2(G_f^\ve)} \\
+ \frac 12\rho_p \|p_e^\ve(0)\|^2_{L^2(G_e^\ve)} +  \langle F_u,  \partial_t u_e^\ve\, \varrho^2 \rangle_{(\partial G)_T} +  \langle F_p, p_e^\ve\, \varrho^2 \rangle_{(\partial G)_T}.
\end{aligned}
\end{equation}
Due to assumptions on  $\widetilde{\bf E}$ and $\partial_t \widetilde{\bf E}$ there exists such   $\gamma>0$  that
 $$
  \big(2 \gamma  {\widetilde{\bf E}}_1(\omega, \mathcal K(\eta)) -  \partial_t {\widetilde{\bf E}}_1(\omega, \mathcal K(\eta)) \big) A \cdot A  \geq 0  \; \text{ for all symmetric matrices } A \text{ and } \eta \in \mathbb R, \text{ and  $\mathcal P$-a.a.\ } \omega \in \Omega.
 $$

The weak stochastic two-scale convergence of $({\bf E}^\ve(b^\ve_{e,3}))^{1/2} \e( u_e^\ve)$,   $(2\gamma {\bf E}^\ve(b^\ve_{e,3}) -  \partial_t {\bf E}^\ve(b_{e,3}^\ve))^{1/2} \e( u_e^\ve)$,   and $(K^\ve_p)^{1/2} \nabla p^\ve_e$, as $\ve \to 0$,  and the lower-semicontinuity of the norm ensure
\begin{equation}
\begin{aligned}
\hspace{-0.2 cm }  &\phantom{+\, } \frac {\rho_e} 2 \|\partial_t u_e(s)\varrho(s)\,  \chi\big._{\Omega_e} \|^2_{L^2(G\times \Omega)} + \gamma \rho_e  \|\partial_t u_e\, \varrho \chi\big._{\Omega_e}\|^2_{L^2(G_s\times  \Omega)} \\
\hspace{-0.2 cm }  & + \frac 12  \langle \widetilde{\bf E}(\omega, b_{e,3}) \varrho^2(s) \big(\e(u_e(s))+ U_{e, {\rm sym}}^1(s)\big)\,  \chi\big._{\Omega_e}, \e(u_e(s)) + U_{e, {\rm sym}}^1(s)\rangle_{G,  \Omega}
 \\
\hspace{-0.2 cm }  & +  \frac 12 \langle \varrho^2 \big(2 \gamma\,   \widetilde{\bf E}(\omega, b_{e,3}) -   \partial_t\widetilde{\bf E}(\omega, b_{e,3})\big) (\e(u_e)+ U_{e, {\rm sym}}^1) \chi\big._{\Omega_e},  \e(u_e)+  U_{e, {\rm sym}}^1\rangle_{G_{s}, \Omega}
\\
\hspace{-0.2 cm } & + \frac {\rho_p}2  \|p_e(s)\varrho(s) \chi_{\Omega_e}\|^2_{L^2(G\times \Omega)}
+ \gamma \rho_p  \|p_e  \varrho  \chi_{\Omega_e}\|^2_{L^2(G_{s}\times \Omega)}
+ \langle  \varrho^2 \widetilde K_p(\omega) (\nabla p_e+ P_e^1) \chi\big._{\Omega_e}, \nabla p_e + P_e^1\rangle_{G_{s}, \Omega}
\\
\hspace{-0.2 cm } & +\frac { \rho_f}2   \|\partial_t u_f(s)\varrho(s) \chi\big._{\Omega_f}\|^2_{L^2(G\times\Omega)}
+  \gamma \rho_f  \|\partial_t u_f \, \varrho \chi\big._{\Omega_f}\|^2_{L^2(G_{s}\times \Omega)}
+ \mu  \| \e_\omega(\partial_t u_f)\, \varrho\,  \chi_{\Omega_f}\|^2_{L^2(G_s\times \Omega)}
\\
\hspace{-0.2 cm }  & \leq \liminf \limits_{\ve \to 0} \mathcal E^\ve(u_e^\ve, p_e^\ve, \partial_t u^\ve_f) \leq \limsup\limits_{\ve \to 0} \mathcal E^\ve(u_e^\ve, p_e^\ve, \partial_t u^\ve_f)
 = \frac { \rho_e} 2 \|\partial_t u_e(0)\, \chi\big._{\Omega_e} \|^2_{L^2(G\times  \Omega)} \\
\hspace{-0.2 cm }   &+ \frac 12  \big\langle \widetilde{\bf E}(\omega, b_{e,3})\big( \e(u_e)(0)+ U_{e, \rm{sym}}^0\big) \, \chi\big._{\Omega_e}, \e(u_e)(0)+ U_{e, \rm{sym}}^0  \big\rangle_{G,  \Omega}
 + \frac { \rho_p} 2 \|p_e(0) \chi\big._{\Omega_e}\|^2_{L^2(G\times \Omega)}  \\
 \hspace{-0.2 cm }  &+
\frac {\rho_f}2 \|\partial_t u_f(0) \chi\big._{\Omega_f}\|^2_{L^2(G\times\Omega)}  + \langle F_u,  \partial_t u_e\,  \varrho^2 \rangle_{(\partial G)_s, \Omega}+ \langle F_p,  p_e \, \varrho^2\rangle_{(\partial G)_s, \Omega}.
\end{aligned}
\end{equation}
Here we also used the strong convergence of   $b_e^\ve$  and the stochastic  two-scale convergence of $\nabla p_e^\ve$, $\e(u^\ve_e)$, $\partial_t u_f^\ve$,  and $\ve \e(\partial_t u_f^\ve)$.
Considering the limit equations for  $u_e$, $U_e^1$, $p_e$, $P_e^1$, and $\partial_t u_f$ and taking $(\partial_t u_e \, \varrho^2, p_e\,  \varrho^2, \partial_t u_f \, \varrho^2)$ as a test function  yield
\begin{equation}
\begin{aligned}
& \frac {\rho_e}2 \| \partial_t u_e(s) \varrho(s)\chi\big._{\Omega_e} \|^2_{L^2(G\times \Omega)}  - \frac {\rho_e}2 \| \partial_t u_e(0) \chi\big._{\Omega_e}\|^2_{L^2(G\times\Omega)}+ \gamma \rho_e  \| \partial_t u_e \, \varrho \,  \chi\big._{\Omega_e} \|^2_{L^2(G_{s}\times \Omega)}
\\
&+ \langle \widetilde{\bf E}(\omega, b_{e,3}) \big(\e( u_e)+ U_{e, {\rm sym}}^1)\big),  \e(\partial_t u_e) \, \varrho^2  \chi\big._{\Omega_e} \rangle_{G_{s}, \Omega}  +
\langle  \nabla p_e+ P_e^1,  \partial_t u_e \chi\big._{\Omega_e}  \rangle_{G_{s}, \Omega}
\\
& +
\frac {\rho_p}2 \|p_e(s) \varrho(s)\,  \chi\big._{\Omega_e}\|^2_{L^2(G\times \Omega)}
- \frac {\rho_p}2 \| p_e(0) \, \chi\big._{\Omega_e} \|^2_{L^2(G\times \Omega)}+ \gamma  \rho_p \| p_e\,  \varrho\,  \chi\big._{\Omega_e}\|^2_{L^2(G_{s}\times \Omega)} \\
& +\big \langle \big[\widetilde K_p(\omega)( \nabla p_e+ P_e^1)  -  \partial_t u_e \big] \chi\big._{\Omega_e} - \partial_t u_f \, \chi\big._{\Omega_f} ,  \nabla p_e\,  \varrho^2 \big\rangle_{G_{s}, \Omega} \\
& + \frac {\rho_f}2 \|\partial_t u_f(s)\, \varrho(s)\chi\big._{\Omega_f}\|^2_{L^2(G\times \Omega)} -  \frac {\rho_f}2 \|\partial_t u_f(0) \chi\big._{\Omega_f} \|^2_{L^2(G\times \Omega)} +  \gamma \rho_f \| \partial_t u_f\,  \varrho\,  \chi\big._{\Omega_f}\|^2_{L^2(G_{s}\times \Omega)} \\
& + \mu    \langle \e_\omega(\partial_t u_f),   \e_\omega(\partial_t u_f)\varrho^2 \chi\big._{\Omega_f} \rangle_{G_{s},  \Omega}  + \langle \nabla p_e, \partial_t u_f\,  \varrho^2 \, \chi\big._{\Omega_f} \rangle_{G_s, \Omega} -  \langle   P_e^1 \chi\big._{\Omega_e},  \partial_t u_f   \varrho^2\rangle_{G_{s},  \Omega} \\
& = \langle F_u, \partial_t u_e\, \varrho^2 \rangle_{(\partial G)_s} +  \langle F_p, p_e\, \varrho^2 \rangle_{(\partial G)_s}
\end{aligned}
\end{equation}
for $s \in (0,T)$.
Taking $P_e^1$ as a test function in the equation for $P_e^1$ yields
\begin{equation}
\begin{aligned}
   \langle   P_e^1,  \partial_t u_f \,   \varrho^2\, \chi\big._{\Omega_f}\rangle_{G_{s}, \Omega} =  \langle \widetilde K_p(\omega)( \nabla p_e+ P_e^1)  - \partial_t u_e,  P_e^1 \varrho^2\, \chi\big._{\Omega_e} \rangle_{G_{s}, \Omega}.
\end{aligned}
\end{equation}
Since $P_e^1\in L^2(G_T; L^2_{\rm pot} (\Omega))$ and $\partial_t u_f  \in L^2(G_T; L^2_{\rm sol} (\Omega))$ we obtain
$$
\langle   P_e^1,  \partial_t u_f  \,  \varrho^2\rangle_{G_{s}, \Omega}  = 0  \; \; \text{ and } \; \;
  \langle   P_e^1,  \partial_t u_f \,   \varrho^2\, \chi\big._{\Omega_e}\rangle_{G_{s}, \Omega} = -   \langle   P_e^1,  \partial_t u_f \,   \varrho^2\, \chi\big._{\Omega_f}\rangle_{G_{s}, \Omega} .
$$
Considering   equation \eqref{correctos_ue} for the corrector $U_e^1$ and taking   $\partial_t U_e^1 \, \varrho^2$ as a test function imply
\begin{equation}
\begin{aligned}
& \langle \widetilde{\bf E}(\omega, b_{e,3}) \big(\e( u_e)+ U_{e, {\rm sym}}^1\big),  \e(\partial_t u_e) \, \varrho^2\, \chi\big._{\Omega_e} \rangle_{G_{s}, \Omega} \\
&= \langle \widetilde{\bf E}(\omega, b_{e,3}) \big(\e( u_e)+ U_{e, {\rm sym}}^1\big),  (\e(\partial_t u_e) +  \partial_t U_{e, {\rm sym}}^1)\, \varrho^2\, \chi\big._{\Omega_e} \rangle_{G_{s},  \Omega}  \\
& =
\frac 12 \left \langle \widetilde{\bf E}(\omega, b_{e,3}) \big(\e( u_e(s))+ U_{e, {\rm sym}}^1(s)\big)\varrho^2(s)\, \chi\big._{\Omega_e} ,  \e(u_e(s)) +  U_{e, {\rm sym}}^1(s) \right\rangle_{G,  \Omega} \\
&  -
\frac 12 \left \langle \widetilde{\bf E}(\omega, b_{e,3}) \big(\e( u_e(0))+ U_{e, {\rm sym}}^1\big)\chi\big._{\Omega_e},   \e(u_e(0)) +  U_{e, {\rm sym}}^1\right \rangle_{G, \Omega}\\
& + \frac 12 \left\langle \big(2 \gamma \widetilde{\bf E}(\omega, b_{e,3}) -  \partial_t \widetilde {\bf E}(\omega, b_{e,3}) \big) \varrho^2\big (\e( u_e)+ U_{e, {\rm sym}}^1\big) \chi\big._{\Omega_e},  \e(u_e) +  U_{e, {\rm sym}}^1 \right\rangle_{G_s, \Omega}.
\end{aligned}
\end{equation}
Thus we obtain that
\begin{equation*}
\begin{aligned}
\mathcal E(u_e, p_e, \partial_t u_f) \leq \lim\inf\limits_{\ve \to 0} \mathcal E^\ve(u_e^\ve, p_e^\ve, \partial_t u^\ve_f)
\leq \limsup\limits_{\ve \to 0}  \mathcal E^\ve(u_e^\ve, p_e^\ve, \partial_t u^\ve_f) =  \mathcal E(u_e, p_e, \partial_t u_f),
\end{aligned}
\end{equation*}
and, hence the strong stochastic two-scale convergence stated in  Lemma.
\end{proof}

\section{Derivation of macroscopic equations for $b_e$ and $c$.}\label{macro_diffusion}
Using  strong stochastic two-scale convergence of $\e(u_e^\ve)$ and $\partial_t u_f^\ve$ we derive macroscopic equations for concentrations of pectins $b_e$ and calcium $c$. First we shall prove convergence of sequences defined on the boundaries of the random microstructure  $\Gamma^\ve$.

\begin{lemma}\label{converg_bound_Gamma}
Consider the random measure $\mu_\omega$ denoting the surface measure of $\Gamma(\omega)$ and define $d\mu^\ve_\omega(x) = \ve^3\,  d\mu_\omega(x/\ve)$.
\begin{itemize}
\item[(i)]  If $\|b^\ve\|_{L^p(G_{e, T}^\ve)} + \|\nabla b^\ve\|_{L^p(G_{e,T}^\ve)} \leq C$ and $b^\ve \to b$ stochastic two-scale, $b \in L^p(0,T; W^{1,p}(G))$, with $p \in (1, \infty)$,  then  for any $\phi\in C^\infty(0,T;C_0^\infty(\mathbb R^3))$
and any $\psi\in C(\Omega)$ we have
\begin{equation}\label{conver_222}
\lim\limits_{\ve\to 0} \int_{G_T} b^\ve(t,x)\,  \phi(t,x) \psi(\T_{x/\ve}\omega) d\mu^\ve_\omega(x) dt  = \int_{G_T} \int_\Omega  b(t,x) \phi(t,x) \psi(\omega) d{\boldsymbol \mu} (\omega) \, dx dt
\end{equation}
and
\begin{equation}
\begin{aligned}
\int_{G_T} \int_\Omega |b|^p d{\boldsymbol \mu} (\omega) dx dt \leq C  \int_{G_T} \int_\Omega |b|^p d\mathcal P dx dt.
\end{aligned}
\end{equation}
\item[(ii)] If
 $\|b^\ve\|_{L^p(G_{e,T}^\ve)} + \ve \|\nabla b^\ve\|_{L^p(G_{e, T}^\ve)} \leq C$ and $b^\ve \to b$ stochastic two-scale, $b \in L^p(G_T;  W^{1,p}(\Omega, d\mathcal P))$, with $p\in (1,\infty)$,  then convergence \eqref{conver_222} holds, and
\begin{equation}\label{omega_lp}
\int_{G_T} \int_\Omega |b|^p d{\boldsymbol \mu} (\omega) dx dt \leq C.
\end{equation}
\end{itemize}
\end{lemma}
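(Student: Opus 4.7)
The lemma establishes stochastic two-scale convergence on the random oscillating surfaces $\Gamma^\ve$, with the Palm measure $\boldsymbol{\mu}$ replacing $\mathcal P$ in the limit. My plan rests on three ingredients developed in order. \textbf{Step 1} is a uniform trace estimate for $W^{1,p}$ functions on $G$ against the scaled measure $\mu^\ve_\omega$:
\[
\int_G |f|^p\,d\mu^\ve_\omega \leq C\big(\|f\|^p_{L^p(G)} + \ve^p\|\nabla f\|^p_{L^p(G)}\big),\qquad f\in W^{1,p}(G),
\]
with $C$ uniform in $\ve$ and (for typical realisations) in $\omega$. This is a natural extension of Lemma~\ref{lemma_trace_ineq} from $p=2$ and stationary functions on $\Omega$ to arbitrary $W^{1,p}(G)$ functions: cover $G$ by $\ve$-cubes, apply the classical trace inequality on each cube after the rescaling $y = x/\ve$, and sum, all within the geometric framework of Assumption~\ref{assum1}.

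\textbf{Step 2} is the basic Birkhoff--Palm ergodic theorem for the scaled random measures: for $\varphi\in C_c(G_T)$ and $\eta\in C(\Omega)$,
\[
\int_{G_T}\varphi(t,x)\,\eta(\T_{x/\ve}\omega)\,d\mu^\ve_\omega\,dt \ \longrightarrow\  \int_{G_T}\varphi(t,x)\,dx\,dt\int_\Omega\eta(\omega)\,d\boldsymbol{\mu}(\omega),
\]
which is the convergence of the empirical surface measures associated to the stationary random measure $\mu_\omega$ (Daley--Vere-Jones).

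\textbf{Part (i).} Here the two-scale limit $b$ is independent of $\omega$, so $b\in L^p(0,T;W^{1,p}(G))$. Extending $b^\ve$ via Lemma~\ref{extension} and using Rellich compactness, $b^\ve \to b$ strongly in $L^p(G_T)$. Splitting $b^\ve = b + (b^\ve - b)$: approximating $b$ in $W^{1,p}$ by smooth functions and applying Step~2 handles the $b$-part (with the approximation error controlled by Step~1), while the difference satisfies
\[
\int_G |b^\ve - b|^p\,d\mu^\ve_\omega \leq C\|b^\ve - b\|^p_{L^p(G)} + C\ve^p\|\nabla(b^\ve - b)\|^p_{L^p(G)} \longrightarrow 0
\]
by the strong $L^p$ convergence together with the uniform $L^p$-bounds on $\nabla b^\ve$ and $\nabla b$ (the prefactor $\ve^p$ does the work). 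The stated bound on $\int\!\int|b|^p\,d\boldsymbol{\mu}\,dx\,dt$ is then immediate, since $b$ is $\omega$-independent and $\boldsymbol{\mu}(\Omega)<\infty$.

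\textbf{Part (ii), the main obstacle.} Now $b$ genuinely depends on $\omega$, and the integrand $b^\ve\phi\,\psi(\T_{x/\ve}\omega)$ is not directly reducible to Step~2. My plan is to exploit $\Gamma(\omega) = \partial G_f(\omega)$ and convert boundary integrals into volume integrals via the divergence theorem: for $\psi\in C^1_\T(\Omega)$ I would select a smooth random vector field $\Psi(\omega)$ with $\Psi(\omega)\cdot n|_{\Gamma(\omega)} = \psi(\omega)$, and rewrite
\[
\int_{G_T} b^\ve \phi\,\psi(\T_{x/\ve}\omega)\,d\mu^\ve_\omega\,dt = \ve \int_{G_f^\ve\cap G_T}\mathrm{div}\big[b^\ve\phi\,\Psi(\T_{x/\ve}\omega)\big]\,dx\,dt.
\]
Expanding the divergence produces four terms involving $\ve\nabla b^\ve\cdot\phi\Psi$, $\ve\,b^\ve\nabla_x\phi\cdot\Psi$, $\ve\,b^\ve\phi\,\mathrm{div}_x\Psi$, and $b^\ve\phi\,\mathrm{div}_\omega\Psi$; under the hypothesis that $\ve\nabla b^\ve$ is uniformly $L^p$-bounded with $\ve\nabla b^\ve \rightharpoonup \nabla_\omega b$ two-scale, each term has an identifiable two-scale limit, and an integration by parts in $\omega$ in the resulting limit reconstructs the Palm-measure integral $\int\!\int b\,\phi\,\psi\,d\boldsymbol{\mu}\,dx\,dt$, giving \eqref{conver_222}. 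The norm bound \eqref{omega_lp} then follows by lower semicontinuity of two-scale convergence applied to Step~1. The truly delicate point is the construction of the extension $\Psi$ with $\Psi\cdot n|_\Gamma=\psi$ in a way that is jointly smooth in $\omega$ and measurable, which relies on the uniform Lipschitz regularity of $\Gamma(\omega)$ and the uniform distance bounds in Assumption~\ref{assum1}.
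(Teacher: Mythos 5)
Your Part (i) is essentially the paper's proof: a uniform trace estimate with prefactor $\ve^p$ on the gradient term, strong $L^p$ convergence $b^\ve\to b$ via extension and compactness, and the Birkhoff/Palm ergodic identity for the scaled measures $\mu^\ve_\omega$. The paper organises the bookkeeping slightly differently (it integrates in $t$ against $\phi_1(t)$ first and works with $\hat b^\ve - \hat b$), but the ingredients and the logic are the same.

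Part (ii) is where you depart from the paper, and there is a genuine gap in your route. After applying the divergence theorem in $G_f^\ve$ and passing to the two-scale limit, you arrive at an expression of the form $\int_{G_T}\int_{\Omega_f}\mathrm{div}_\omega[b\,\Psi]\,\phi\,d\mathcal P\,dx\,dt$, and you then invoke ``an integration by parts in $\omega$'' to recover $\int_{G_T}\int_\Omega b\,\phi\,\psi\,d\boldsymbol{\mu}\,dx\,dt$. But a Gauss--Green identity of the form $\int_{\Omega_f}\mathrm{div}_\omega[b\,\Psi]\,d\mathcal P=\int_\Omega b\,\Psi\cdot n\,d\boldsymbol{\mu}$, valid for $b\in L^p(G_T;W^{1,p}(\Omega,d\mathcal P))$, is itself the substance of the lemma: it asserts simultaneously that $\ve\,d\sigma^\ve$ converges to $\boldsymbol{\mu}$ in the correct two-scale sense \emph{and} that $b$ has a well-defined $\boldsymbol{\mu}$-trace. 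You would have to prove that formula from scratch, which is no easier than the original statement and risks being circular. Your step also presupposes a random vector field $\Psi$ with $\Psi\cdot n=\psi$ on $\Gamma(\omega)$ that is simultaneously measurable in $\omega$ and smooth enough to differentiate in $\omega$; since $\Gamma(\omega)$ is only uniformly Lipschitz, the normal $n$ is merely bounded measurable, and the construction of such a $\Psi$ is itself a nontrivial piece of work that you flag but do not resolve.

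The paper circumvents both difficulties by mollifying the random surface measure: $d\mu_{\omega,\delta}(x)=\rho^\delta(\T_x\omega)\,dx$ with $\rho^\delta=\delta^{-d}\int k(y/\delta)\,d\mu_\omega(y)$. The Palm measure of $\mu_{\omega,\delta}$ is simply $\rho^\delta\,d\mathcal P$, which is absolutely continuous with a bounded density, so the stochastic two-scale limit of $b^\ve$ with respect to $d\mu^\ve_{\omega,\delta}$ is automatically the same $b$; meanwhile the trace inequality gives a uniform $L^p(d\mu^\ve_\omega)$ bound on $b^\ve$, hence weak two-scale convergence to some $B\in L^p(G_T;L^p(\Omega,\boldsymbol{\mu}))$. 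Letting $\delta\to0$ and using the weak convergence $\boldsymbol{\mu}_\delta\rightharpoonup\boldsymbol{\mu}$ from \cite{Zhikov_Piatnitski_2006} identifies $B=b$. This avoids both the extension of $\psi$ by a vector field and any Gauss--Green identity on $\Omega_f$; I would recommend adopting this route for (ii), or else proving the stochastic Gauss--Green formula as a separate, prior lemma before using it.
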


\begin{proof}
For $\mathcal P$-a.a. realisations $\omega \in \Omega$, using the assumptions on the geometry of $G_e^\ve$ and  the trace inequality in each  $G^\sigma_{e,j}=G^\sigma_{f, j}(\omega)\setminus G_{f,j}(\omega)$, see proof of Lemma~\ref{cor_estim_extend} for the definition of $G^\sigma_{f, j}(\omega)$,  applying the scaling $x/\ve$ and summing up over $j$ we obtain
\begin{equation}\label{bound_ineq}
\begin{aligned}
\int_{G_T} |b^\ve|^p d\mu^\ve_\omega(x) dt = \ve \int_{\Gamma_{e,T}^\ve} |b^\ve|^p d\sigma^\ve dt  \leq C_1 \int_{G_{e,T}^\ve} | b^\ve|^p dx dt  + C_2 \ve^p \int_{G_{e,T}^\ve} |\nabla b^\ve|^p dx dt  \leq C.
\end{aligned}
\end{equation}
Moreover, in the case {\it (i)} the limit function $b$ does not depend on $\omega$, its trace on $\Gamma_{e,T}^\ve$ is well defined, and
\begin{equation}\label{bound_ineq_bis1}
\begin{aligned}
\ve^p \int_{G_{e,T}^\ve} |\nabla b^\ve-\nabla b|^p dx dt  \
\mathop{\longrightarrow}\limits_{\ve\to0} \ 0.
\end{aligned}
\end{equation}
Choosing $\phi(x,t)=\phi_1(t)\phi_2(x)$ we conclude that $\hat b^\ve(x)=\int_0^T b^\ve(x,t)\phi_1(t)\,dt$
converges in $L^p(G)$ strongly to  $\hat b(x)=\int_0^T b(x,t)\phi_1(t)\,dt$, and
\begin{equation*}
\int_{G} |\hat b^\ve-\hat b|^p d\mu^\ve_\omega(x)  \leq C_3 \int_{G_{e}^\ve} |\hat b^\ve-\hat b|^p dx  + C_4
\ve^p \int_{G_{e}^\ve} |\nabla \hat b^\ve-\nabla \hat b|^p dx  \
\mathop{\longrightarrow}\limits_{\ve\to0} \ 0.
\end{equation*}
This yields \eqref{conver_222}.

   In the case (ii),
for $b \in L^p(G_T; W^{1,p} (\Omega, d\mathcal P))$  using
the same arguments as in the proof of Lemma \ref{lemma_trace_ineq} one can show that
$b\in L^p(G_T; L^p(\Omega,\boldsymbol{\mu}))$. This yields \eqref{omega_lp}.

To justify \eqref{conver_222} we regularize measures $\mu_\omega$ as follows. Let $k=k(x)$ be a non-negative
symmetric $C_0^\infty(\mathbb R^d)$ function such that $\int_{\mathbb R^d}k(x)\,dx=1$, where here $d=3$. We set
$$
d\mu_{\omega,\delta}(x)=\rho^\delta(\mathcal{T}_x\omega)dx \qquad\hbox{with}\ \
\rho^\delta(\omega)=\delta^{-d}\int_{\mathbb R^d}k\Big(\frac y\delta\Big)d\mu_\omega(y).
$$
It is easy to check that a.s. for any test functions $\phi\in C^\infty(0,T;C_0^\infty(\mathbb R^d))$
and $\psi\in C(\Omega)$ we have
$$
\lim\limits_{\delta\to0}\limsup\limits_{\ve\to0}
\bigg|\int_{G_T} b^\ve(t,x)\,  \phi(t,x) \psi(\T_{x/\ve}\omega) d\mu^\ve_\omega(x) dt
-
\int_{G_T} b^\ve(t,x)\,  \phi(t,x) \psi(\T_{x/\ve}\omega) d\mu^\ve_{\omega,\delta}(x) dt\bigg|=0.
$$
The Palm measure of $d\mu_{\omega,\delta}(x)$ is $d\boldsymbol{\mu}_\delta(\omega)=\rho^\delta(\omega)d\mathcal{P}$.
Since  for each $\delta>0$ the measure $\boldsymbol{\mu}_\delta$ is absolutely continuous with respect to $d\mathcal{P}$
and the density $\rho^\delta$ is bounded, the two-scale limit of $b^\ve$  with respect to $d\mu^\ve_{\omega,\delta}$ is $b$
that is
$$
\lim\limits_{\ve\to0}
\int_{G_T} b^\ve(t,x)\,  \phi(t,x) \psi(\T_{x/\ve}\omega) d\mu^\ve_{\omega,\delta}(x) dt
=\int_{G_T}\int_{\Omega}b(t,x,\omega)\phi(t,x)\psi(\omega)d\boldsymbol{\mu}_\delta(\omega)dxdt.
$$
By the trace theorem  a.s.
$$
\limsup\limits_{\ve\to0}\|b^\ve\|_{L^p(G_T,d\mu_\omega^\ve)}\le C.
$$
Therefore, for a subsequence $b^\ve$ stochastically two-scale converge in $L^p(G_T,d\mu_\omega^\ve)$ to some function
$B\in L^p(G_T;L^p(\Omega,d\boldsymbol{\mu}))$.  As was proved in $\cite{Zhikov_Piatnitski_2006}$, the measures $d\boldsymbol{\mu}_\delta$
converge weakly to the measure $d\boldsymbol{\mu}$.  Using one more time the same arguments as in the proof of Lemma
\ref{lemma_trace_ineq} we obtain
$$
\lim\limits_{\delta\to0}\int_{G_T}\int_\Omega b(x,t)\phi(t,x)\psi(\omega)d\boldsymbol{\mu}_\delta(\omega)dx dt=
\int_{G_T}\int_\Omega b(x,t)\phi(t,x)\psi(\omega)d\boldsymbol{\mu}(\omega) dx dt.
$$
Passing to the limit $\delta\to0$ and combining the above relations, we conclude that
$$
\int_{G_T}\int_\Omega b(x,t)\phi(t,x)\psi(\omega)d\boldsymbol{\mu}(\omega)dxdt
=\int_{G_T}\int_\Omega B(x,t)\phi(t,x)\psi(\omega)d\boldsymbol{\mu}(\omega) dx dt.
$$
In view of arbitrariness of $\phi$ and $\psi$ this implies that $B=b$ in $L^p(G_T;L^p(\Omega,d\boldsymbol{\mu}))$.

\end{proof}

Using the convergence on the oscillating boundary $\Gamma^\ve$ proved in Lemma~\ref{converg_bound_Gamma} we can now derive macroscopic equations for $b_e$ and $c$.
\begin{proof}[Proof of Theorem~\ref{th:reactions}]
We can rewrite  the  microscopic equation for $b_e^\ve$ as
\begin{equation}\label{cd_two_scale_b}
\begin{aligned}
&- \langle   b^\ve_e  \, \chi\big._{G_e^\ve} ,\partial_t  \varphi_1\rangle_{G_T}+\langle D^\ve_b(b^\ve_{e,3}) \nabla b^\ve_e, \nabla\varphi_1\,  \chi\big._{G_e^\ve}  \rangle_{G_{T}}
=\langle   b_{e0}  \, \chi\big._{G_e^\ve} , \varphi_1(0)\rangle_{G}\\
&+  \langle g_b(c^\ve_e,b_e^\ve, \e(u_e^\ve)), \varphi_1\,  \chi\big._{G_e^\ve}  \rangle_{G_{T}}
 +  \ve \langle R(b^\ve_e) , \varphi_1 \rangle_{ \Gamma^\ve_T}
 + \langle  F_b(b^\ve_e),  \varphi_1 \rangle_{(\partial G)_T}
\end{aligned}
\end{equation}
for $\varphi_1 = \phi_1(t,x) + \ve \phi_2(t,x)\phi_3(\T_{x/\ve}\omega)$, where
$\phi_1 \in C^\infty(\overline G_T)$, with $\phi_1(T,x) =0 $ for $x\in \overline G$,  $\phi_2 \in C^\infty_0(G_T)$, and $\phi_3 \in  C^1_\T (\Omega)$.

From the a priori estimates for $b^\ve$ and assumptions on $R$ we have that
$$
\ve \int_0^T \int_{\Gamma^\ve} |R(b^\ve_e)|^2 d\sigma^\ve  dt \leq C,
$$
where the constant $C$ is independent of $\ve$.  Thus considering the stochastic two-scale convergence  we obtain that there exists $\widetilde R \in L^2(G_T\times \Omega, dt \times dx \times d {\boldsymbol \mu} (\omega))$
$$
\lim\limits_{\ve \to 0} \ve \int_0^T \int_{\Gamma^\ve} R(b^\ve_e)\,  \varphi_1\, d\sigma^\ve  dt  = \lim\limits_{\ve \to 0} \int_0^T \int_G   R(b^\ve_e)\,  \varphi_1\, d \mu^\ve_{\widetilde\omega}(x) dt  = \int_0^T \int_G  \int_\Omega \widetilde R \varphi_1\, d{\boldsymbol \mu} (\omega) dx dt,
$$
where $\mu_\omega$ is the random measure of  $\Gamma(\omega)$.
Using the assumptions on the geometry and on the function $R$ together with the strong convergence of $b_e^\ve$ in $L^2(G_T)$ we have that
$$
\ve \int_0^T \int_{\Gamma^\ve} |R(b^\ve_e) - R(b_e)|^2  d\sigma^\ve  dt \leq C \int_{G_{e,T}^\ve} \left[|b^\ve_e - b_e|^2  +
\ve^2 |\nabla(b^\ve_e - b_e)|^2 \right] dx dt  \to 0 \text{ as } \ve \to 0.
$$
Then using the strong convergence of $b_e$, the continuity of $R$ and the convergence result in Lemma~\ref{converg_bound_Gamma}  we obtain that   $\widetilde R = R(b_e)$ $\mathcal P$-a.s. in $G_T\times \Omega$.

 Taking  the stochastic two-scale limit   and using the  strong convergence of $b_e^\ve$ and $c_e^\ve$ and the strong stochastic two-scale convergence of $\e(u_e^\ve)$, shown in Lemma~\ref{lem_strong_two-scale},  we obtain
\begin{equation}\label{macro_b_22}
\begin{aligned}
&-  \langle \vartheta_e  b_e,  \partial_t \phi_1 \rangle_{G_T}+\langle D_b(b_{e,3}) (\nabla b_e + B_e^1) \chi\big._{\Omega_e}, \nabla\phi_1 +\phi_2  \nabla_\omega \phi_3 \rangle_{G_{T}, \Omega}
=   \langle \vartheta_e  b_{e0},   \phi_1(0) \rangle_{G}\\
&+ \langle g_b(c,b_e, \e(u_e)+ U^1_{e, {\rm sym}}) \chi\big._{\Omega_e}, \phi_1 \rangle_{G_{T}, \Omega} + \int_0^T \int_{G} \int_\Omega R(b_e)\, \phi_1 d{\boldsymbol \mu(\omega)} dx dt  + \langle  F_b(b_e),  \phi_1 \rangle_{(\partial G)_T}.
\end{aligned}
\end{equation}

To show the convergence of $g_b(c^\ve_e,b_e^\ve, \e(u_e^\ve))$ we considered an approximation of   $U_{e, {\rm sym}}^1\in L^2(G_T\times \Omega)$ by $U_\delta  \in C(G_T; C_\T (\Omega))$,  such that
$U_\delta \to U_{e, {\rm sym}}^1$ in $L^2(G_T\times \Omega)$ as $\delta \to 0$.   For $\mathcal P$-a.a.\  $\omega \in \Omega$ we define
$U_\delta^{\ve} (t,x)= U_\delta (t,x, \T_{x/\ve} \omega)$.  Using the strong stochastic two-scale convergence of $U_\delta^{\ve}$ to $U_\delta$ we obtain
\begin{equation}\label{estim_delta}
\lim\limits_{\delta \to 0} \lim\limits_{\ve \to 0}\|U_\delta^{\ve} \|_{L^2(G_T)}  =\lim\limits_{\delta \to 0}  \| U_\delta \|_{L^2(G_T \times \Omega)}   = \|U_{e, {\rm sym}}^1 \|_{L^2(G_T \times \Omega)}.
\end{equation}
Then  for $\phi_2 \in C_0([0,T)\times G)$ and $\phi_3 \in C_\T(\Omega)$ we can write
\begin{equation*}
\begin{aligned}
& \langle g_b(c_e^\ve,b_e^\ve, \e(u_e^\ve)), \phi_2(t,x) \phi_3(\T_{x/\ve} \omega) \,  \chi\big._{G_e^\ve} \rangle_{G_T} \\
&=
\langle g_b(c_e^\ve,b_e^\ve, \e(u_e^\ve)) - g_b(c,b_e, \e(u_e)+ U^{\ve}_\delta) , \phi_2  \phi_3(\T_{x/\ve} \omega)\,  \chi\big._{G_e^\ve} \rangle_{G_T}\\
&+ \langle  g_b(c, b_e, \e(u_e)+  U^{\ve}_\delta) , \phi_2 \phi_3(\T_{x/\ve} \omega)\,  \chi\big._{G_e^\ve} \rangle_{G_T}.
\end{aligned}
\end{equation*}
Assumptions on $g_b$, together with \eqref{estim_delta},  the strong convergence of $U_\delta$ to $U_{e, {\rm sym}}^1$, and the strong stochastic two-scale convergence of $\e(u_e^\ve)$, imply
\begin{equation*}
\lim\limits_{\delta \to 0} \lim\limits_{\ve \to 0}
\langle  g_b(c, b_e, \e(u_e)+  U_\delta^{\ve}) , \phi_2\, \phi_3 \, \chi\big._{G_e^\ve} \rangle_{G_T} =
\langle  g_b(c, b_e, \e(u_e)+ U_{e, {\rm sym}}^1) , \phi_2 \phi_3 \,  \chi\big._{\Omega_e} \rangle_{G_T,  \Omega}
\end{equation*}
and
\begin{equation*}
\begin{aligned}
& |\langle g_b(c_e^\ve,b_e^\ve, \e(u_e^\ve)) - g_b(c,b_e, \e(u_e)+  U^{\ve}_\delta) , \phi_2\phi_3 \chi_{G_e^\ve} \rangle_{G_T}|
\\
& \leq C \big[\|c_e^\ve - c\|_{L^2(G_T)} + \|b_e^\ve - b_e\|_{L^2(G_T)}  + \|\e(u_e^\ve) - (\e(u_e) + U^{\ve}_\delta)\|_{L^2(G_{e,T}^\ve)} \big]
\\
& \leq C(\ve) + C \big[ \|\e(u_e^\ve)\, \chi\big._{G_e^\ve}\|^2_{L^2(G_T)} + \| (\e(u_e) + U^{\ve}_\delta) \chi\big._{G_e^\ve} \|^2_{L^2(G_T)}
- 2 \langle \e(u_e^\ve)\, \chi\big._{G_e^\ve},  \e(u_e) + U^{\ve}_\delta \rangle_{G_T} \big]^{1/2} \to 0
\end{aligned}
\end{equation*}
as $\ve \to 0$ and $\delta \to 0$.  Assumptions on $g$ and a priori estimates for  $b^\ve_e$, $c_e^\ve$, and $\e(u_e^\ve)$ ensure that
$$
\|g_b(c_e^\ve,b_e^\ve, \e(u_e^\ve)) \|_{L^2(G_{e,T}^\ve)} \leq C,
$$
where the constant $C$ is independent of $\ve$. Thus
$$
g_b(c_e^\ve,b_e^\ve, \e(u_e^\ve)) \rightharpoonup g_b(c_e,b_e, \e(u_e)+ U_{e, {\rm sym}}^1) \quad \text{ stochastically  two-scale}.
$$

Considering $\phi_1=0$  and using the linearity of the resulted equation  we obtain
$$
B^1_e(t,x,\omega)  = \sum_{j=1}^3 \partial_{x_j} b_e(t,x) w^j_b(\omega)
$$
 and the unit cell problem \eqref{unit_cell_prob_b} for $w_b^j$. Choosing $\phi_2=0$ yields macroscopic equations for $b_e$.

Taking  $\varphi_2(t,x)= \psi_1(t,x) + \ve \psi_2(t,x) \psi_3(\T_{x/\ve}\omega)$  with  $\psi_1 \in C^\infty(\overline G_T)$, $\psi_1(T,x) =0$ for $x\in \overline G$,  $\psi_2 \in C^\infty_0(G_T)$, and  $\psi_3 \in C^1_{\T,  \Gamma} (\Omega)$   as a test function in
\eqref{cd_two} we obtain
\begin{equation*}\label{cd_two_scale_c}
\begin{aligned}
&-  \langle c^\ve_e  \chi\big._{G_e^\ve}, \partial_t  \varphi_2\rangle_{G_T} +\langle D_e(b^\ve_{e,3}) \nabla c^\ve_e, \nabla\varphi_2\,  \chi\big._{G_e^\ve}  \rangle_{G_{T}}
-\langle g_c(c^\ve_e,b_e^\ve, \e(u_e^\ve)), \varphi_2 \chi\big._{G_e^\ve}  \rangle_{G_{T}} \\
&-  \langle  c^\ve_f  \chi\big._{G_f^\ve},  \partial_t  \varphi_2  \rangle_{G_T}+\langle D_f \nabla c^\ve_f, \nabla\varphi_2 \chi\big._{G_f^\ve} \rangle_{G_{T}}
-\langle\mathcal G(\partial_t u^\ve_f) c_f^\ve, \nabla\varphi_2 \chi\big._{G_f^\ve}  \rangle_{G_{T}}-\langle g_f(c^\ve_f), \varphi_2 \chi\big._{G_f^\ve} \rangle_{G_{T}}\\
& =   \langle c^\ve_{e0} \,  \chi\big._{G_e^\ve},   \varphi_2(0) \rangle_{G}  +
 \langle  c^\ve_{f0} \,  \chi\big._{G_f^\ve},   \varphi_2(0)  \rangle_{G}+  \langle F_c(c^\ve_e),  \varphi_2 \rangle_{(\partial G)_T}.
\end{aligned}
\end{equation*}
In the same way as for $g_b$, using  the strong stochastic two-scale convergence of $\e(u_e^\ve)\chi\big._{G^\ve_e} $ and  $\partial_t u^\ve_f\, \chi\big._{G^\ve_f} $, the  strong convergence of $b_e^\ve$ and $c^\ve$, and assumptions on  $g_e$ and $\mathcal G$, we  obtain
\begin{equation*}
\begin{aligned}
 \chi\big._{G_e^\ve} \, g_e(c_e^\ve, b_e^\ve, \e(u_e^\ve)) &\rightharpoonup \chi\big._{\Omega_e} \, g_e(c_e, b_e, \e(u_e) + U_{e, {\rm sym}}^1) \qquad && \text{stochastically  two-scale}, \\\
\chi\big._{G_f^\ve} \,  \mathcal G(\partial_t u_f^\ve ) & \rightharpoonup \chi\big._{\Omega_f} \,  \mathcal G(\partial_t u_f ) && \text{stochastically  two-scale}.
\end{aligned}
\end{equation*}
Thus applying the stochastic  two-scale  and  the strong convergences of $b_e^\ve$ and  $c^\ve$,   together with strong stochastic  two-scale convergence of
$\e(u_e^\ve) \, \chi\big._{G_e^\ve}$ and $\partial_t u_f^\ve \, \chi\big._{G_f^\ve}$, yields
\begin{equation}\label{macro_c_1}
\begin{aligned}
& -  \langle   c,  \partial_t \psi_1  \rangle_{G_T,  \Omega}
 + \langle D(b_{e,3}) (\nabla c + C^1),  \nabla\psi_1 + \psi_2 \nabla_\omega \psi_3 \rangle_{G_T, \Omega}  -\langle\mathcal G(\partial_t u_f) c\, \chi\big._{\Omega_f}, \nabla\psi_1 + \psi_2 \nabla_\omega \psi_3  \rangle_{G_{T}, \Omega} \\
& = \langle g_f(c)\, \chi\big._{\Omega_f}, \psi_1  \rangle_{G_{T}, \Omega}+ \langle g_c(c,b_e, \e(u_e)+ U_{e, {\rm sym}}^1) \chi\big._{\Omega_e}, \psi_1   \rangle_{G_{T}, \Omega}
+  \langle   c_0,   \psi_1(0)  \rangle_{G, \Omega}+ \langle F_c(c),  \psi_1 \rangle_{(\partial G)_T}.
\end{aligned}
\end{equation}
Considering $\psi_1= 0$ yields
\begin{equation*}
\langle D(b_{e,3}) (\nabla c+ C^1) -  \mathcal G(\partial_t u_f) \, c\,  \chi\big._{\Omega_f} ,    \psi_2   \nabla_\omega \psi_3 \rangle_{G_T\times \Omega} =0.
\end{equation*}
From here we obtain that
\begin{equation}
\begin{aligned}
& C^1(t,x,\omega)  = \sum_{j=1}^3 \partial_{x_j} c(t,x) w^j(\omega) + c(t,x) Z(t,x,\omega)\,  \chi\big._{\Omega_f},
\end{aligned}
\end{equation}
where  $w^j\in L^2_{{\rm pot}, \Gamma} (\Omega)$, with $j=1, 2,3$, and $Z\in L^\infty(G_T;  L^2_{\rm pot} (\Omega))$ are solutions of the cell problems
\eqref{unit_cell_prob_c} and \eqref{unit_cell_prob_cf}.
Then considering $\psi_2 =0$ and first $\psi_1\in C_0^1(G_T)$ and then $\psi_1 \in C_0^1(0,T; C^1(\overline G))$,   and using the expression  \eqref{corrector_U}  for the corrector  $U_e^1$  we obtain  the macroscopic equation and the boundary conditions  for $c$ in \eqref{macro_concentration}.
The equations for $b_e$  and $c$ and the fact that $b_e, c \in L^2(0,T; H^1(G))$ imply that $\partial_t b_e, \partial_t c \in L^2(0, T; (H^1(G))^\prime)$. Thus $b_e, c \in C([0,T]; L^2(G))$ and using equations   \eqref{macro_b_22}  and  \eqref{macro_c_1} we obtain that  $b_e$ and $c$ satisfy initial conditions.
\end{proof}

\section{Well-posedness of the macroscopic problem}
In the same way  as in the case of periodic microstructure \cite{Andrey_Mariya},  using fixed point iteration we show existence of an unique solution of  the limit problem.
\begin{lemma}\label{estim_u_ef_macro}
There exists a unique weak solution of the limit problem \eqref{macro_ue}--\eqref{macro_two-scale_uf}, \eqref{macro_concentration}.
\end{lemma}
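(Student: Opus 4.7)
The plan is to follow the fixed-point strategy used in the periodic case \cite{Andrey_Mariya}, adapted to the two-scale structure produced by stochastic homogenization. The nonlinearities enter only through the memory functional $\mathcal{K}(b_{e,3})$ (which feeds into ${\bf E}^{\rm hom}$, $D_{b,\rm eff}$, $D_{\rm eff}$ via the cell problems \eqref{unit_1}, \eqref{unit_cell_prob_b}, \eqref{unit_cell_prob_c}), through the reaction terms $g_b, g_e, g_f$, and through the convection velocity $u_{\rm eff}$ which depends on $\partial_t u_f$ via the cell problem \eqref{unit_cell_prob_cf}. The idea is therefore to introduce a mapping
$$\mathcal{F}: (\bar b_e,\bar c, \bar u_e, \partial_t \bar u_f) \longmapsto (b_e,c,u_e,\partial_t u_f),$$
where in the first stage, with $\bar b_{e,3}$ frozen in the effective coefficients, one solves the linear coupled system \eqref{macro_ue}--\eqref{macro_two-scale_uf} for $(u_e,p_e,\partial_t u_f)$; and in the second stage, with $(u_e,U_e^1,\partial_t u_f)$ feeding the reaction terms and the effective velocity, one solves the linear reaction-diffusion problem \eqref{macro_concentration} for $(b_e,c)$. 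A fixed point of $\mathcal{F}$ is a weak solution.

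For the first stage I would use a Galerkin scheme in $H^1(G)^3 \times H^1(G) \times \{\varphi\in L^2(G;H^1(\Omega))^3:\,{\rm div}_\omega \varphi=0,\ \Pi_\tau\varphi|_{\Gamma(\omega)}=\Pi_\tau\varphi_{\rm macro}\}$, testing with $(\partial_t u_e\varrho^2, p_e\varrho^2, \partial_t u_f\varrho^2)$ as in Section~\ref{strong_convergence}. The coercivity of ${\bf E}^{\rm hom}(\bar b_{e,3})$, $K_p^{\rm hom}$, and $\e_\omega(\cdot)$ on the relevant subspace, together with the choice of $\gamma$ absorbing $\partial_t {\bf E}^{\rm hom}$, yields a priori estimates of the type \eqref{estim_u_p_u} in the macroscopic variables. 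Uniqueness follows by energy methods from the linearity. For the second stage, the diffusion matrices $D_{b,\rm eff}(\bar b_{e,3})$ and $D_{\rm eff}(\bar b_{e,3})$ are uniformly elliptic by Assumption~{\bf A1} and the cell problem structure, so existence, uniqueness, and the usual parabolic regularity estimates follow from standard theory; the flux and reaction terms are controlled by Assumption~{\bf A5} and the bound on $\mathcal{G}$.

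To close the argument I would show that $\mathcal{F}$ is a contraction on $L^\infty(0,T^\ast; L^2(G))$-type norms for $T^\ast$ small enough. The crucial Lipschitz properties are: (a) $\bar b_{e,3} \mapsto {\bf E}^{\rm hom}(\bar b_{e,3})$ is Lipschitz from $L^2(G_T)$ to itself, which follows from $\widetilde{\bf E}_1\in C(\Omega;C^2_b(\mathbb{R}))$ and the smoothness of $\kappa$; (b) $\bar b_{e,3}\mapsto D_{b,\rm eff}(\bar b_{e,3}),\,D_{\rm eff}(\bar b_{e,3})$ are Lipschitz by differentiating the cell problems; (c) $\partial_t\bar u_f \mapsto u_{\rm eff}$ is Lipschitz via \eqref{unit_cell_prob_cf} and the Lipschitz bound on $\mathcal{G}$; (d) $g_b,g_e$ satisfy the Lipschitz estimates in Assumption~{\bf A5}. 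Taking differences of two iterates and using these Lipschitz properties together with the energy estimates from the two linear subproblems gives a Gronwall-type contraction on short time, and then global existence on $[0,T]$ follows from the global a priori bounds (which mirror those in Lemmas of Section~\ref{apriori_1}, now at the macroscopic level).

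The principal obstacle is the coupling through the cell-problem-dependent effective coefficients together with the non-local-in-time memory kernel $\mathcal{K}$: variations of $b_{e,3}$ must be propagated through \eqref{unit_1} to control ${\bf E}^{\rm hom}$. However, because the cell problems are linear elliptic variational problems on $\Omega$ with coefficients depending smoothly (via $C^2_b$) on $\mathcal{K}(b_{e,3})$, the Lax-Milgram-based sensitivity analysis gives Lipschitz dependence of $W^{kl}_e$ on $\mathcal{K}(b_{e,3})$ in $L^2_{\rm pot}(\Omega)^3$, and the regularizing property of the convolution with $\kappa$ (with $\kappa(0)=0$) makes this Lipschitz constant compatible with the contraction on small time intervals. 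A secondary technical point is the transmission condition $\Pi_\tau \partial_t u_f = \Pi_\tau \partial_t u_e$ on $\Gamma(\omega)$ in the two-scale Stokes problem \eqref{macro_two-scale_uf}, handled by the trace inequality of Lemma~\ref{lemma_trace_ineq} and the Palm measure framework of Lemma~\ref{converg_bound_Gamma}, exactly as done in the derivation of the macroscopic system.
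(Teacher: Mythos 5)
Your proposal follows essentially the same fixed-point strategy as the paper: linearize by freezing $b_{e,3}$ in the effective coefficients, solve the coupled mechanics/Stokes system and then the reaction--diffusion system, and close the loop via the Lipschitz dependence of ${\bf E}^{\rm hom}$, $D_{b,\rm eff}$, $D_{\rm eff}$, $u_{\rm eff}$ on $b_{e,3}$ and $\partial_t u_f$ together with Gronwall. The paper implements exactly this by estimating differences of consecutive iterates, arriving at \eqref{uefp_contact} and \eqref{bc_contact} before invoking the fixed-point argument, so the two proofs coincide in substance.
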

\begin{proof}
First we show estimates for  two iterations  $(u_{e}^{j-1},  \partial_t p_{e}^{j-1}, \partial_t u_{f}^{j-1})$,   $(b_e^{j-1}, c^{j-1})$ and  $(u_{e}^j,  \partial_t p_{e}^j, \partial_t u_{f}^j)$,   $(b_e^j, c^j)$ for  limit problem \eqref{macro_ue}--\eqref{macro_two-scale_uf},~\eqref{macro_concentration}.

We begin with the equations  for fluid flow velocity $\partial_t u_f$ and for elastic displacement  $u_e$.
Taking $\partial_t \widetilde u_{f}  -  \partial_t \widetilde u_{e}$ as a test function in the equation for the difference  $\partial_t \widetilde u_{f}^j$ and $\partial_t \widetilde u_{e}$ as a test function in the equations for the difference  $ \widetilde u_{e}^j$  we obtain
\begin{equation}\label{eq_uniq_u}
\hspace{-0.1 cm }  \begin{aligned}
&\; \; \; \rho_e\|\partial_t  \widetilde u_{e}^j(s)\|^2_{L^2(G)}
+   \big\langle {\bf E}^{\rm hom}(b_{e,3}^{j-1}) {\bf e}(\widetilde u_{e}^j(s)), {\bf e}(\widetilde u_{e}^j(s))\big\rangle_G -  \big\langle \partial_t {\bf E}^{\rm hom}(b_{e,3}^{j-1}) {\bf e}(\widetilde u_{e}^j),{\bf e}(\widetilde u_{e}^j)\big\rangle_{G_s}
\\ &
+2 \big \langle ({\bf E}^{\rm hom}(b_{e,3}^{j}) - {\bf  E}^{\rm hom}(b_{e,3}^{j-1})) \, {\bf e}(u_{e}^j(s)),   \e(\widetilde u_{e}^j(s))\big\rangle_G \\
& - 2 \big\langle \partial_t  ({\bf E}^{\rm hom}(b_{e,3}^{j}) - {\bf E}^{\rm hom}(b_{e,3}^{j-1})) \, {\bf e}(u_{e}^j) + ({\bf E}^{\rm hom}(b_{e,3}^{j}) - {\bf E}^{\rm hom}(b_{e,3}^{j-1}))\,  \partial_t {\bf e}(u_{e}^j) ,  {\bf e}(\widetilde u_{e}^j)\big \rangle_{G_s}
\\
& +  \rho_f \|\partial_t \widetilde u_{f}^j(s)\, \chi\big._{\Omega_f} \|^2_{L^2(G\times \Omega)}  + 2 \mu \|{\bf e}_\omega(\partial_t \widetilde u_{f}^j)\, \chi\big._{\Omega_f}\|^2_{L^2(G_s\times \Omega)}  +2  \langle \nabla \widetilde p_{e}^j, \partial_t \widetilde u_e^j \chi\big._{\Omega_e}+ \partial_t \widetilde u_{f}^j \, \chi\big._{\Omega_f}  \rangle_{G_s, \Omega}
\\ &
 = 2 \langle \widetilde P_e^{1, j}, \partial_t \widetilde u_{f}^j \, \chi\big._{\Omega_e}-\partial_t \widetilde u_{e}^j \, \chi\big._{\Omega_e} \rangle_{G_s,  \Omega}
+  \rho_f \|\partial_t \widetilde u_{f}^j(0) \, \chi\big._{\Omega_f} \|^2_{L^2(G\times \Omega)} + \rho_e \|\partial_t  \widetilde u_{e}^j(0)\|^2_{L^2(G)}
\\
&
+  \big\langle {\bf E}^{\rm hom}(b_{e,3}^{j-1})\, {\bf e}(\widetilde u_{e}^j(0)), {\bf e}(\widetilde u_{e}^j(0))\big\rangle_G + 2 \big \langle ({\bf E}^{\rm hom}(b_{e,3}^{j}) - {\bf  E}^{\rm hom}(b_{e,3}^{j-1})) \, {\bf e}(u_{e}^j(0)),   \e(\widetilde u_{e}^j(0))\big\rangle_G ,
\end{aligned}
\end{equation}
where
  $\widetilde u_e^j = u_e^j - u_e^{j-1}$, $\widetilde p_e^j = p_e^j - p_e^{j-1}$, $\widetilde u_f^j = u_f^j - u_f^{j-1}$,     and  $\widetilde P_e^{1,j} = P^{1,j}_{e} - P^{1,j-1}_{e}$.  The  equation \eqref{macro_ue} for $p_e^j$ and $p_e^{j-1}$ yields
\begin{equation} \label{eq_uniq_p}
\begin{aligned}
\rho_p \|\widetilde p_{e}^j(s)\|^2_{L^2(G)} + 2 \langle K_{p}^{\rm hom} \nabla \widetilde p_{e}^j, \nabla \widetilde  p_{e}^j  \rangle_{G_s}
=  2 \langle K_{u}\,  \partial_t \widetilde  u_e^j  + Q(\partial_t u_f^j)- Q(\partial_t u_f^{j-1}), \nabla \widetilde   p_{e}^j \rangle_{G_s}  \\ + \rho_p \|\widetilde p_{e}^j(0)\|^2_{L^2(G)} .
\end{aligned}
\end{equation}
Due to the  assumptions in {\bf A1} on ${\bf E}$, the definition of the macroscopic elasticity tensor ${\bf E}^{\rm hom}$ and the properties of a solution $W^{kl}_e$, with $k,l=1,2,3$,  of the corresponding cell problems in \eqref{unit_1}, we have
$$
\|{\bf E}^{\rm hom}(b_{e,3}^j) - {\bf E}^{\rm hom}(b_{e,3}^{j-1})\|_{L^\infty(G_s)}+ \|\partial_t  ({\bf E}^{\rm hom}(b_{e,3}^j) - {\bf E}^{\rm hom}(b_{e,3}^{j-1})) \|_{L^\infty(G_s)} \leq  C \|\widetilde b_{e}^j \|_{L^\infty(0,s; L^\infty(G))}
$$
for  $s \in (0,T]$, where  $\widetilde b_e^j = b_e^j - b_e^{j-1}$.  The  expression \eqref{expres_Pe1} for  $P_{e}^{1,j}$ and $P_e^{1,j-1}$    and the estimates for the  $H^1$-norm  of the  solutions of the  cell problems  \eqref{unit_1} and \eqref{two-scale_qf} yield
\begin{equation*}
\begin{aligned}
\| \widetilde P^{1,j}_{e} \|_{L^2(G_s\times \Omega)}  \leq  C \left(\|\nabla\widetilde p_{e}^j  \|_{L^2(G_s)} +  \|\partial_t  \widetilde u_{e}^j \|_{L^2(G_s)}
+ \|\partial_t  \widetilde u_{f}^j\,  \chi\big._{\Omega_f} \|_{L^2(G_s\times \Omega)} \right).
\end{aligned}
\end{equation*}
Adding  \eqref{eq_uniq_u} and \eqref{eq_uniq_p}, and applying the  H\"older and Gronwall inequalities yield
\begin{equation}\label{uefp_contact}
\begin{aligned}
  \|\partial_t  \widetilde u_{e}^j\|_{L^\infty(0,s; L^2(G))} + \| {\bf e}(\widetilde u_{e}^j) \|_{L^\infty(0,s; L^2(G))}
 +  \|\widetilde p_{e}^j\|_{L^\infty(0,s; L^2(G))}  + \|\nabla \widetilde p_{e}^j\|_{ L^2(G_s)} \\
+  \|\partial_t  \widetilde u_{f}^j \, \chi\big._{\Omega_f}\|_{L^\infty(0,s; L^2(G\times \Omega))}  +  \|{\bf e}_\omega(\partial_t  \widetilde u_{f}^j) \, \chi\big._{\Omega_f} \|_{L^2(G_s\times \Omega)}
  \leq C  \|\widetilde b_{e}^j \|_{L^\infty(0, s; L^\infty(G))}
\end{aligned}
\end{equation}
for all $s\in (0,T]$ and the constant $C$ does not depend on $s$ and solutions of the macroscopic problem.

In the same way as in  the case of periodic microstructure  \cite{Andrey_Mariya} we obtain the following estimates for
 $\widetilde b_e^j$ and $\widetilde c^j$:
  \begin{equation}\label{bc_contact}
  \|\widetilde  b_{e}^j\|_{L^\infty(0,s; L^\infty(G))} + \| \widetilde c^j\|_{L^\infty(0, s; L^2(G))} \leq C_1\big[ \|{\bf e}(\widetilde u_{e}^{j-1})\|_{L^{1 + \frac 1 \sigma}(0,s; L^2(G))} +\| \partial_t \widetilde u_{f}^{j-1}\, \chi\big._{\Omega_f} \|_{L^2(G_s\times \Omega)}  \big], \\
\end{equation}
for  $s\in (0,T]$  and   any  $0 < \sigma< 1/9$, the constant $C$ being independent of $s$ and solutions of the problem,
and
$$
  \| b_{e}^j \|_{L^\infty(0,T; L^\infty(G))}  +\| c^j \|_{L^\infty(0,T; L^\infty(G))}  + \| b_{e}^{j-1} \|_{L^\infty(0,T; L^\infty(G))}  +\| c^{j-1} \|_{L^\infty(0,T; L^\infty(G))}  \leq C.
  $$
Then combining \eqref{uefp_contact} and  \eqref{bc_contact} and  applying a fixed point argument we obtain existence of a uniques solution of the  coupled macroscopic problem \eqref{macro_ue}--\eqref{macro_two-scale_uf}, \eqref{macro_concentration}.
\end{proof}

\section*{Acknowledgements}   This research was supported by a Northern Research Partnership early career research exchange grant. The research of MP was also supported by the EPSRC First grant EP/K036521/1 ''Multiscale modelling and analysis of mechanical properties of plant cells and tissues.''

\end{document}